\documentclass[11pt]{elsarticle}

\usepackage{amsmath,amssymb,amsthm}
\usepackage{enumerate}
\usepackage[mathscr]{euscript}

\usepackage{tikz}
\usepackage{tikz-cd}
\usetikzlibrary{matrix, calc, arrows}
\usepackage{latexsym,amscd,epsfig,verbatim}

\newtheorem{thm}{Theorem}[section]
\newtheorem{lem}[thm]{Lemma}
\newtheorem{prop}[thm]{Proposition}
\newtheorem{cor}[thm]{Corollary}
\newtheorem{conj}[thm]{Conjecture}
\theoremstyle{definition}
\newtheorem{defn}[thm]{Definition}
\newtheorem{note}[thm]{Notation}
\newtheorem{ex}[thm]{Example}
\newtheorem{rem}[thm]{Remark}
\newcommand\V{V(\Gamma)}

\makeatletter
\def\ps@pprintTitle{%
 \let\@oddhead\@empty
 \let\@evenhead\@empty
 \def\@oddfoot{}%
 \let\@evenfoot\@oddfoot}
\makeatother

\begin{document}

\title{Artin Group Presentations Arising from Cluster Algebras}
\author{Jacob Haley \fnref{haley}}
\address{University of Notre Dame, Department of Mathematics\\
Notre Dame IN 46556}

\author{David Hemminger \fnref{hemminger}}
\address{Duke University, Mathematics Department \\
Box 90320, Durham NC 27708-0320}

\author{Aaron Landesman \fnref{landesman}}
\address{Harvard University, Department of Mathematics\\
One Oxford Street, Cambridge MA 02138}

\author{Hailee Peck \fnref{peck}}
\address{Millikin University, Department of Mathematics \\
1184 W Main St., Decatur IL 62522}

\fntext[haley]{jachaley@umich.edu}
\fntext[hemminger]{dhemminger@math.ucla.edu}
\fntext[landesman]{aaronlandesman@college.harvard.com}
\fntext[peck]{hpeck@millikin.edu}

\date{\today}
\begin{keyword} Artin group \sep
cluster algebra \sep
diagram mutations
\end{keyword}

\begin{abstract}
In 2003, Fomin and Zelevinsky proved that finite type cluster algebras can be classified by Dynkin diagrams. Then in 2013, Barot and Marsh defined the presentation of a reflection group associated to a Dynkin diagram in terms of an edge-weighted, oriented graph, and proved that this group is invariant (up to isomorphism) under diagram mutations. In this paper, we extend Barot and Marsh's results to Artin group presentations, defining new generator relations and showing mutation-invariance for these presentations.
\end{abstract}

\maketitle

\section{Introduction \& Motivation}
\label{sec:Intro}

In \cite{FZ02}, Fomin and Zelevinsky first introduced the concept of cluster algebras. Barot and Marsh extended Fomin and Zelevinsky's results in \cite{BM13}, providing a presentation of the reflection group associated to a Dynkin diagram with generators that correspond to elements of a companion basis associated to a seed of a finite type cluster algebra. They define generator relations corresponding to chordless cycles arising in diagrams of finite type in order to give a Coxeter group presentation for these diagrams. They also proved that this Coxeter group presentation is invariant up to isomorphism under the mutation equivalence relation. That is, given a diagram $\Gamma$ and a diagram mutation equivalent to $\Gamma$, denoted $\Gamma^{\prime} = \mu_k(\Gamma)$, they proved that $W_{\Gamma} \cong W_{\Gamma^{\prime}}$, where $W_{\Gamma}$ and $W_{\Gamma^{\prime}}$ are the group presentations corresponding to $\Gamma$ and $\Gamma^{\prime}$, respectively.

In our paper, we define $A_{\Gamma}$ to be the Artin group presentation arising from a cluster algebra, where $\Gamma$ is the diagram associated to the cluster algebra. We provide the necessary relations for the generators of the group, and show that these relations hold under mutations of vertices in a diagram. Our main result is to show that this Artin group presentation is invariant, up to isomorphism, under the mutation equivalence relation. We state the result here, but present the detailed proof in Section \ref{sec:main_result}.

\begin{thm}[Theorem \ref{thm:main}]
Let $\Gamma$ be a diagram of finite type, and let $\Gamma^{\prime} = \mu_k(\Gamma)$ be the mutation of $\Gamma$ at vertex $k$. Then $A_{\Gamma} \cong A_{\Gamma^{\prime}}$.
\end{thm}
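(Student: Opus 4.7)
My plan is to follow the strategy of Barot and Marsh in the Coxeter setting, but adapted to the Artin group, where the order-two generator relations are absent and the braid-type relations are pure equalities between alternating products. The proof has three stages: construct a candidate homomorphism $\phi \colon A_\Gamma \to A_{\Gamma'}$ on generators, verify that every defining relation of $A_\Gamma$ maps to a relation that holds in $A_{\Gamma'}$, and promote $\phi$ to an isomorphism using the fact that diagram mutation is an involution.

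For the first stage, I would specify $\phi$ on each generator $a_i$ of $A_\Gamma$. Away from $k$, the diagrams $\Gamma$ and $\Gamma'$ agree, so the natural assignment is $\phi(a_i) = a_i'$ for vertices $i$ not adjacent to $k$, together with $\phi(a_k) = (a_k')^{\pm 1}$. For a neighbor $i$ of $k$, mutation twists the local picture, so the correct image should be a conjugate of $a_i'$ by $a_k'$ (or by a half-twist involving $a_k'$ when the edge weight exceeds $3$). These formulas are exactly the Artin-group lifts of Barot and Marsh's Coxeter assignments, obtained by replacing each reflection $s_j$ with the Artin generator $a_j'$ and being careful to use conjugation rather than the symmetric product $s_k s_j s_k$.

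For the second stage, each defining relation of $A_\Gamma$ must be checked under $\phi$. The braid relations along edges disjoint from $k$ are immediate from the agreement of $\Gamma$ and $\Gamma'$ away from $k$. The braid relations along edges touching $k$ reduce, after substituting the conjugate formulas, to standard Artin identities of the form $a_k' a_i' a_k' \cdots = a_i' a_k' a_i' \cdots$; these follow from the braid relations already present in $A_{\Gamma'}$. The cycle relations are the delicate case: mutation can create, destroy, or reweight chordless cycles passing through $k$, so a case analysis indexed by the local structure at $k$ (and by the finite-type classification) will be required to show that each cycle relation of $A_\Gamma$ maps to a consequence of the cycle relations of $A_{\Gamma'}$. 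I expect to lean heavily on whatever cycle-mutation lemmas are established earlier in the paper.

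The third stage is formal: because $\mu_k \circ \mu_k = \mathrm{id}$, the symmetric construction yields a candidate $\psi \colon A_{\Gamma'} \to A_\Gamma$, and verifying $\psi \circ \phi$ and $\phi \circ \psi$ act as the identity on generators suffices. The main obstacle I anticipate is the cycle-relation verification in the second stage; in particular, new chordless cycles produced by mutation whose edge weights exceed $3$ force manipulation of long braid words, and confirming that no additional hidden relation is needed beyond those already defining $A_\Gamma$ will be the technical heart of the argument.
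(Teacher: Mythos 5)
Your first two stages coincide with the paper's route: the homomorphism there (Proposition \ref{prop:ti_relations}) sends the generator at a vertex $i$ with an arrow $i\rightarrow k$ to $s_ks_is_k^{-1}$, fixes all other generators, and is verified relation by relation against the local pictures at $k$ and the classification of chordless cycles produced by mutation (Corollary \ref{local_three_picture}, Lemma \ref{lem:chordless_cycles}). The genuine gap is in your third stage. You assert that, because $\mu_k\circ\mu_k=\mathrm{id}$, the symmetric construction gives $\psi$ and that ``verifying $\psi\circ\phi$ and $\phi\circ\psi$ act as the identity on generators suffices.'' That verification fails in the Artin setting: if one map conjugates the neighbours having arrows into $k$ and its mirror image (built from $\mu_k(\Gamma')=\Gamma$) conjugates the neighbours having arrows out of $k$ in $\Gamma$, then on \emph{every} generator the composite comes out to conjugation by the generator at $k$, not the identity; for instance, in the paper's direction, $r_i\mapsto s_ks_is_k^{-1}\mapsto r_kr_ir_k^{-1}$ for $i\rightarrow k$ in $\Gamma$. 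In the Coxeter case this discrepancy can be absorbed using $s_k^2=e$, but that is exactly the relation you no longer have. To close the argument you must either (a) observe that both composites are inner automorphisms, hence bijective, so the map is still an isomorphism --- a point your outline never makes --- or (b) do what the paper does: route the inverse through the opposite diagram via the isomorphism $A_{\Gamma}\cong A_{\Gamma^{op}}$, $s_i\mapsto q_i^{-1}$, of Lemma \ref{lem:op_homomorphism}, which reverses the direction of conjugation so that $\psi=\Delta^\prime\circ\varphi_{op}\circ\Delta$ composes with $\varphi$ to the identity on the nose. As written, your stage 3 is not ``formal'' and would not go through.

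Two smaller points. Your worry about new chordless cycles ``whose edge weights exceed $3$'' is vacuous: in finite type all weights are at most $3$ by 2-finiteness, and chordless cycles carry only weights $1$ and $2$ (Proposition \ref{cycle_types}); moreover the conjugating element is always the single generator at $k$, for incoming edges of weight $1$, $2$ or $3$ alike, so no ``half-twist'' images are ever needed. Finally, the relation checking you defer is precisely where the absence of the involution relations costs real work --- for example the auxiliary identity of Lemma \ref{lem:extra_square_relation} is needed already for the weight-$2$ triangle case of the $(T2^\prime)$ verification --- so the hedged formulas ($\phi(a_k)=(a_k')^{\pm1}$, an unspecified set of conjugated neighbours) leave the technical heart of the argument unspecified rather than completed.
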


\indent Section \ref{sec:main_definitions} provides the necessary definitions and fundamental results from \cite{BM13} to motivate our own results. For further definitions and references on the topic, we refer the reader to \cite{FZ02}. Section \ref{sec:finite-type_diagrams} will review theory from \cite{FZ02}, \cite{FZ03} as well as review the classifications (from \cite{BM13}) of mutations of diagrams and their oriented chordless cycles. In Section \ref{sec:defn_artingroup}, we define the appropriate relations for our Artin group presentations. Section \ref{sec:one_relation} specifies how certain relations in chordless cycles imply other relations in those chordless cycles. Section \ref{sec:main_result} will provide the proof that the Artin group defined for a diagram $\Gamma$ is invariant up to isomorphism under mutations of $\Gamma$. Finally, in Section \ref{sec:affine}, we will state a conjecture which could extend the main result to diagrams of affine type.\\

In the process of writing this paper, we became aware of related results obtained independently by Grant and Marsh in \cite{GM14} and Qiu in \cite{Q14}. Using different approaches, the two papers found relations for the Artin group in simply-laced diagrams. However, their relations differ from the ones used in our paper, and we show that our results hold for all diagrams of finite type. Furthermore, our approach to the problem is more combinatorial than the topological approaches found in \cite{Q14} and \cite{GM14}.

\section{Background}
\label{sec:main_definitions}

We begin by introducing some preliminary notations and definitions which will aid the reader in understanding the results in the following sections. For further references on cluster algebras, we refer the reader to \cite{FZ02, FZ03}, and for a more detailed description of Artin group presentations, we direct attention to \cite{FN61}. We also provide references to several lemmas and propositions from \cite{BM13} which were helpful in formulating our own results. \\

The initial introduction of cluster algebras by Fomin and Zelevinsky was aimed at making further strides in the areas of representation theory, Lie theory, and total positivity. Since then, the study of cluster algebras has provided a motivation for applications in various other areas of mathematics, including quiver representations. Of particular interest were \textit{finite type} cluster algebras, those with a finite number of distinguished generators. In the sequel to their introductory paper (\cite{FZ03}), Fomin and Zelevinsky introduce the concept of \textit{mutation equivalence} between diagrams, proving that a connected graph is mutation equivalent to an oriented Dynkin diagram if and only if all mutation equivalent graphs have edge weights not exceeding 3. In particular, this proves that finite type cluster algebras can be classified by Dynkin diagrams. \\

A \textit{cluster algebra} is an integral domain which can be generated by a set of elements called \textit{cluster variables} that satisfy certain exchange relations. Following the style of \cite{FZ02} and \cite{BM13}, we will define cluster algebras in terms of \textit{skew-symmetrisable} matrices (that is, a matrix $B$ such that there exists a diagonal matrix $D$ of the same size with $D_{ii} >0$ such that $DB$ is skew-symmetric). Let $\mathbb{F} = \mathbb{Q}(u_1, u_2, \ldots, u_n)$ be the field of rational functions in $n$ indeterminates over $\mathbb{Q}$. We will define an \textit{initial seed} for the cluster algebra to be a fixed pair $(\textbf{x}, B)$, where $\textbf{x} = \{x_1, \ldots, x_n\}$ is a free generating set of $\mathbb{F}$ and $B$ is an $n \times n$ skew-symmetric matrix. Define $x_k^{\prime} \in \mathbb{F}$ by the \textit{exchange relation}
\begin{displaymath}
x_k^{\prime}x_k = \prod_{B_{ik} > 0}{x_i^{B_{ik}}} + \prod_{B_{ik} < 0}{x_i^{-B_{ik}}}
\end{displaymath}
Then, given an initial seed $(\textbf{x}, B)$ and $k \in {1,2,\ldots,n}$, we can define a \textit{mutation} of the seed at $k$, denoted $\mu_k(\boldsymbol{x}, B) = (\textbf{x}^{\prime}, B^{\prime})$ where:
\begin{displaymath}
B_{ij}^{\prime} = \begin{cases} - B_{ij} & \mbox{ if } i = k \mbox{ or } j = k;\\
B_{ij} + \frac{|B_{ik}|B_{kj} + B_{ik}|B_{kj}|}{2} & \mbox{ otherwise. }\\
\end{cases}
\end{displaymath}
and $\textbf{x}^{\prime} = \left\{ x_1, x_2, \ldots, x_{k-1}, x_k^{\prime}, x_{k+1}, \ldots, x_n \right\}$.
Such a mutation or a sequence of such mutations generate \textit{seeds} which in turn generate all cluster variables in that, for each $\textbf{x} = \left\{ x_1, \ldots, x_n \right\}$ corresponding to a seed of the cluster algebra, the entries $x_i$ are the cluster variables. \\

 A cluster algebra is said to be of \textit{finite type} if the number of cluster variables that generate it is finite (if it has finitely many seeds). For each finite type cluster algebra, we can associate to its corresponding skew-symmetrisable matrix an edge-weighted, oriented graph, called a \textit{diagram}. We will often denote this diagram by $\Gamma$, and the vertex set of $\Gamma$ by $\V$. We will denote two connected vertices by $i \rightarrow j$, or by $i\--\ j$ if the orientation is not specified. The diagram is determined by the following: for $i, j \in \V$, $i \xrightarrow{w} j$ if and only if $B_{ij} > 0$ and $w = |B_{ij}B_{ji}|$ is the weight of the edge. A skew-symmetrisable matrix $B$ is \textit{2-finite} if $|B_{ij}B_{ji}| \leq 3$ for $i, j \in \left\{ 1, \ldots, n \right\}$. By \cite[7.5]{FZ02}, we have that if $B$ is 2-finite, all 3-cycles in the unoriented graph underlying our diagram must be oriented cyclically. \\

Just as we can define mutations of the seeds of a cluster variable, we can also define mutations of a diagram associated to a cluster algebra of finite type by the following set of rules:
\begin{prop}\cite{FZ03}
Let $B$ be a 2-finite skew-symmetrisable matrix. Then $\Gamma(\mu_k(B))$ is uniquely determined by $\Gamma(B)$ as follows:
\begin{itemize}
\item Reverse the orientations of all edges in $\Gamma(B)$ incident to $k$ (leaving the weights unchanged)
\item For any path in $\Gamma(B)$ of form $i \xrightarrow{a} k \xrightarrow{b} j$ (i.e. with $a,b$ positive), let $c$ be the weight on the edge $j \rightarrow i$, taken to be zero if there is no such arrow. Let $c'$ be determined by $c'\geq 0$ and
$\pm \sqrt{c} \pm \sqrt{c'} = \sqrt{ab}$, 
where the sign before $\sqrt{c}$ (respectively, $\sqrt{c'}$) is positive if the arrows form an oriented cycle and negative otherwise. Then $\Gamma(B)$ changes as in Figure \ref{mutation}, taking the case $c' = 0$ to mean no arrow between $i$ and $j$.
\end{itemize}
\end{prop}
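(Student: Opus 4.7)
The plan is to derive both bullets by direct computation from the mutation formula $B' = \mu_k(B)$, then translate back to the diagram language via $i \xrightarrow{w} j \iff B_{ij} > 0$ and $w = |B_{ij}B_{ji}|$.

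For edges at $k$, the mutation formula gives $B'_{kj} = -B_{kj}$ and $B'_{jk} = -B_{jk}$, so the sign of $B_{kj}$ flips while $|B'_{kj}B'_{jk}| = |B_{kj}B_{jk}|$. This is exactly the assertion that every edge incident to $k$ reverses orientation with its weight preserved.

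For $i, j \neq k$, a sign analysis of $\tfrac{1}{2}\bigl(|B_{ik}|B_{kj} + B_{ik}|B_{kj}|\bigr)$ shows that the correction vanishes unless $B_{ik}$ and $B_{kj}$ share a sign, i.e.\ unless $k$ lies on a directed path between $i$ and $j$; in every other configuration $B'_{ij} = B_{ij}$ and the edge between $i$ and $j$ is unchanged. In the path case $i \xrightarrow{a} k \xrightarrow{b} j$ one finds $B'_{ij} = B_{ij} + B_{ik}B_{kj}$ and $B'_{ji} = B_{ji} - B_{jk}B_{ki}$, the opposite sign in the second expression coming from $B_{jk}, B_{ki} < 0$.

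To extract $c'$, I would fix a positive diagonal $D$ with $DB$ skew-symmetric and use the identity $B_{\alpha\beta} = -(D_\alpha/D_\beta)B_{\beta\alpha}$ to write $B_{ik} = \sqrt{a\, D_k/D_i}$, $B_{kj} = \sqrt{b\, D_j/D_k}$, and $B_{ij} = \varepsilon\sqrt{c\, D_j/D_i}$, where $\varepsilon = -1$ if the triangle is cyclically oriented and $\varepsilon = +1$ otherwise (with the convention $c = 0$ when there is no edge). Substituting into the formulas above and collecting the common factors $\sqrt{D_j/D_i}$ and $\sqrt{D_i/D_j}$ gives
\begin{displaymath}
B'_{ij}B'_{ji} = -\bigl(\sqrt{c} + \varepsilon\sqrt{ab}\bigr)^2,
\end{displaymath}
so $c' = \bigl(\sqrt{c} + \varepsilon\sqrt{ab}\bigr)^2$ and equivalently $\sqrt{ab} = -\varepsilon\sqrt{c} \pm \sqrt{c'}$. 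This is precisely the relation $\pm\sqrt{c} \pm \sqrt{c'} = \sqrt{ab}$, with positive sign before $\sqrt{c}$ exactly when the original triangle is cyclic. The sign of $B'_{ij}$ itself then records the orientation of the new edge, completing the transformation described in the statement.

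The main obstacle is bookkeeping: keeping track of the signs coming from orientations simultaneously with the factors $D_\alpha/D_\beta$ coming from skew-symmetrizability. Once the entries of $B$ are written in the normalised form $\varepsilon\sqrt{w\, D_\beta/D_\alpha}$, every step collapses to a single square-of-a-sum identity, and both bullets drop out together.
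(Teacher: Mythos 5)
This proposition is not proved in the paper at all: it is quoted as background from \cite{FZ03}, so there is no in-paper argument to compare against. Your direct computation from the matrix mutation rule is essentially the standard (Fomin--Zelevinsky) verification, and it is correct: the sign analysis of $\tfrac{1}{2}\bigl(|B_{ik}|B_{kj}+B_{ik}|B_{kj}|\bigr)$, the reduction to $B'_{ij}=B_{ij}+B_{ik}B_{kj}$, $B'_{ji}=B_{ji}-B_{jk}B_{ki}$ in the path case, and the normalised parametrisation $B_{ik}=\sqrt{a\,D_k/D_i}$, $B_{kj}=\sqrt{b\,D_j/D_k}$, $B_{ij}=\varepsilon\sqrt{c\,D_j/D_i}$ do collapse everything to $B'_{ij}B'_{ji}=-\bigl(\sqrt{c}+\varepsilon\sqrt{ab}\bigr)^2$, which gives the $\pm\sqrt{c}\pm\sqrt{c'}=\sqrt{ab}$ rule with the stated sign convention for $\sqrt{c}$. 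Two small points to tighten. First, with the paper's convention that $DB$ is skew-symmetric, the identity should read $B_{\alpha\beta}=-(D_\beta/D_\alpha)B_{\beta\alpha}$, not $-(D_\alpha/D_\beta)B_{\beta\alpha}$; your parametrisation is the one consistent with the correct identity, so this is only a typo, but as written the two lines contradict each other. Second, the last sentence (``the sign of $B'_{ij}$ records the orientation of the new edge'') is doing real work and deserves one explicit line: from $B'_{ij}=\sqrt{D_j/D_i}\,\bigl(\varepsilon\sqrt{c}+\sqrt{ab}\bigr)$ and the fact that the edges at $k$ reverse (so the mutated triangle $j\rightarrow k\rightarrow i$ is cyclically oriented exactly when $B'_{ij}>0$), one checks that the sign attached to $\sqrt{c'}$ in $\sqrt{ab}=-\varepsilon\sqrt{c}\pm\sqrt{c'}$ is positive precisely when the new triangle is oriented, which is the remaining half of the sign convention in the statement. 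With those two repairs the argument is complete.
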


\begin{figure}
\begin{center}
\begin{tikzpicture}[baseline=-0.5ex]

\matrix[matrix of math nodes,column sep={25pt,between origins},row
    sep={40pt,between origins}] at (0,0)
  {
    && |[name = u]|\bullet& \\
    &|[name=dl]| \bullet & &|[name=dr]| \bullet \\
  };
\draw[->]
  (u) to node[right] {b} (dr)
  ;
  \draw[->]
  (dl) to node[left]{a}(u)
  ;
  \draw[->]
  (dr) to node[below]{c}(dl)
  ;
  \node [yshift = .4 cm] at (u){k};
    \node [yshift = -.4 cm,xshift = .4 cm] at (dr){j};
    \node [yshift = -.4 cm,xshift = -.4 cm] at (dl){i};
    \matrix[matrix of math nodes,column sep={25pt,between origins},row
    sep={40pt,between origins}] at (4,0)
  {
    && |[name = u]|\bullet& \\
    &|[name=dl]| \bullet & &|[name=dr]| \bullet \\
  };
\draw[->]
  (dr)to node[right] {b} (u)
  ;
  \draw[->]
  (u) to node[left]{a}(dl)
  ;
  \draw[->]
  (dl) to node[below]{c'}(dr)
  ;
  \node [yshift = .4 cm] at (u){k};
    \node [yshift = -.4 cm,xshift = .4 cm] at (dr){j};
    \node [yshift = -.4 cm,xshift = -.4 cm] at (dl){i};
 
\end{tikzpicture}
\end{center}
\caption{Mutation of $\Gamma$ at a node k}\label{mutation}
\end{figure}
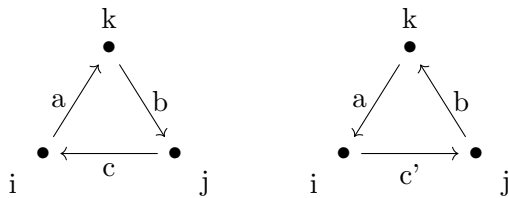

\begin{note}
We notate this mutation of $\Gamma(B)$ at vertex $k$ by $\mu_k(\Gamma)$. \\
\end{note}

\begin{note}
\label{defn:mij}
\indent Given a diagram $\Gamma$, Barot and Marsh define for $i, j \in \V$,
\begin{displaymath}
m_{ij} = \begin{cases} 2 & \mbox{if } i \mbox{ and } j \mbox{ are not connected;} \\
3 & \mbox{if } i \mbox{ and } j \mbox{ are connected by an edge of weight } 1;\\
4 & \mbox{if } i \mbox{ and } j \mbox{ are connected by an edge of weight } 2;\\
6 & \mbox{if } i \mbox{ and } j \mbox{ are connected by an edge of weight } 3.\\
\end{cases}
\end{displaymath}	
\end{note}

Then, they define $W(\Gamma)$ to be the group generated by $s_i$, for $i \in \V$, under the following relations. Note that $e$ will denote the identity element of $W(\Gamma)$.
\begin{enumerate}
\item $s_i^2 = e$ for all $i$;
\item $(s_is_j)^{m_{ij}} = e$ for all $i \neq j$;
\item For any chordless cycle (as defined in Definition \ref{chordlesscycle}) $C$ in $\Gamma$, where
\begin{displaymath}
C = i_0 \xrightarrow{w_1} i_1 \xrightarrow{w_2} \cdots \xrightarrow{w_{d-1}} i_{d-1} \xrightarrow{w_0} i_0
\end{displaymath}
and all of the weights $w_k$ are $1$ or $w_0 = 2$, we have
\begin{displaymath}
(s_{i_0}s_{i_1}\cdots s_{i_{d-2}}s_{i_{d-1}}s_{i_{d-2}}\cdots s_{i_1})^2 =e.
\end{displaymath}
\end{enumerate}

Using this group presentation, Barot and Marsh state the following result:
\begin{thm}\label{thm:barot_and_marsh_A}\cite[Theorem A]{BM13}
Let $\Gamma$ be the diagram associated to a seed in a cluster algebra of finite type. Then $W(\Gamma)$ is isomorphic to the corresponding reflection group.
\end{thm}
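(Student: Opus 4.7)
The plan is to leverage the Fomin--Zelevinsky classification of finite-type cluster algebras, which asserts that every finite-type diagram $\Gamma$ is mutation-equivalent to an orientation of a Dynkin diagram. For a Dynkin diagram, the underlying unoriented graph is a tree, so there are no chordless cycles of length $\geq 3$, the cycle relations (3) are vacuous, and the presentation $W(\Gamma)$ reduces to the standard Coxeter presentation of the associated Weyl group. It therefore suffices to prove mutation invariance, namely $W(\Gamma) \cong W(\mu_k(\Gamma))$ for every vertex $k$. Once this is established, any sequence of mutations connecting $\Gamma$ to a Dynkin orientation yields the desired isomorphism between $W(\Gamma)$ and the reflection group.

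To construct an isomorphism $\phi : W(\mu_k(\Gamma)) \to W(\Gamma)$, I would send each new generator $s_i'$ to an element of $W(\Gamma)$ that mirrors how the companion basis transforms under mutation. The natural choice is $\phi(s_k') = s_k$, $\phi(s_i') = s_i$ when $i$ is not adjacent to $k$ or the arrow in $\Gamma$ points $i \to k$, and $\phi(s_i') = s_k s_i s_k$ when the arrow points $k \to i$. The verification then splits into three stages. First, $\phi(s_i')^2 = e$ is immediate, since each image is a conjugate of an involution. Second, the dihedral relations $(\phi(s_i')\phi(s_j'))^{m_{ij}'} = e$ reduce to computations inside the rank-three parabolic generated by $s_i, s_j, s_k$, together with the mutation rule for edge weights. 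Third, one must check the chordless-cycle relations of $\mu_k(\Gamma)$, which constitutes the bulk of the work.

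The main obstacle is this last stage, since chordless cycles do not transform bijectively under mutation: mutating at $k$ can create new cycles by deleting edges, destroy old cycles by producing chords, or merely reorient an existing cycle. To handle this I would invoke the classification of chordless cycles in finite-type diagrams reviewed in Section \ref{sec:finite-type_diagrams}, and proceed by case analysis on how a given chordless cycle $C$ of $\mu_k(\Gamma)$ interacts with $k$: either $C$ passes through $k$, lies among vertices each adjacent to $k$ (so that $\phi$ acts by conjugation by $s_k$), or avoids $k$ entirely. In each case the task is to rewrite the cycle word of $\mu_k(\Gamma)$ under $\phi$ and reduce it using the relations (2) and the cycle relations (3) already known in $W(\Gamma)$; the constraint that weights are at most $3$ on triangles oriented cyclically keeps this combinatorics finite. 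Finally, since $\mu_k$ is an involution, the symmetric construction applied to $\mu_k(\Gamma)$ provides an inverse candidate, and a brief check on compositions confirms that $\phi$ is an isomorphism.
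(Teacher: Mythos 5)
This theorem is quoted from \cite{BM13} rather than proved in the paper, but your outline is essentially the Barot--Marsh argument itself: reduce to mutation invariance using the Fomin--Zelevinsky classification, define the candidate isomorphism by conjugating by $s_k$ the generators adjacent to $k$, verify the dihedral relations in rank-three subdiagrams and the cycle relations via the classification of chordless cycles, and use involutivity of mutation for the inverse --- which is also exactly the architecture this paper adapts in Proposition \ref{prop:ti_relations} and Theorem \ref{thm:main}. The only small caveat is that the two compositions are not literally the identity but conjugation by $s_k$ (an inner automorphism, since non-neighbors of $k$ commute with $s_k$), which still yields the isomorphism, so the plan is sound.
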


In Section 3 of \cite{BM13}, Barot and Marsh provide an alteration of the group $W(\Gamma)$ in order to extend the group definition to any diagram of finite type. More specifically, they provide relations such that $W_{\Gamma}$, as defined below, is isomorphic to $W(\Gamma)$ (\cite[Proposition 4.5]{BM13}).

\begin{defn}
Let $W_{\Gamma}$ be the group with generators $s_i, i = 1,2,\ldots, n$, subject to the following relations:
\begin{itemize}
\item{(R1)} $s_i^2 = e$ for all $i$
\item{(R2)} $\left(s_is_j\right)^{m_{ij}} = e$ for all $i \neq j$
\end{itemize}
Furthermore, for a chordless cycle $C : i_0 \rightarrow i_1 \rightarrow \cdots \rightarrow i_{d-1} \rightarrow i_0$ and for $a = 0,1,2,\ldots, d-1$, define \textbf{$r\left(i_a, i_{a+1}\right) = s_{i_a}s_{i_{a+1}} \cdots s_{i_{a+d-1}}s_{i_{a+d-2}} \cdots s_{i_{a+1}}$}.\\

\vspace{0.1cm}
Then we have the following relations:
\begin{itemize}
\item{(R3)(a)} If all the weights in the edges of $C$ are $1$, then $r(i_a, i_{a+1})^2 = e$
\item{(R3)(b)} If $C$ has some edges of weight $2$, then $r(i_a, i_{a+1})^k = e$ where $k = 4-w_a$ and $w_a$ is the weight of the edge $i_a\--\ i_{a-1}$
\end{itemize}
\end{defn}

Defining the group $W_{\Gamma}$ with relations as shown above allows them to prove certain characteristics of the interaction between the relations in this group for the chordless cycles underlying the diagrams in question. In particular, they prove the following result.
\begin{thm}\cite[Theorem 5.4a]{BM13}
Let $\Gamma$ be a diagram of finite type and $\Gamma^{\prime} = \mu_k(\Gamma)$ the mutation of $\Gamma$ at vertex $k$. Then $W_{\Gamma} \cong W_{\Gamma^{\prime}}$.
\end{thm}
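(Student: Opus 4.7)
The plan is to construct an explicit isomorphism $\phi: W_\Gamma \to W_{\Gamma'}$ by sending each generator $s_i$ to either itself or to a conjugate of itself by $s_k$, according to the position of $i$ relative to the mutation vertex $k$. Concretely, I would set $\phi(s_k) = s_k$, and for $i \neq k$, define $\phi(s_i) = s_k s_i s_k$ if there is an arrow $k \to i$ in $\Gamma$, and $\phi(s_i) = s_i$ otherwise. One then needs to verify that this assignment respects relations (R1), (R2), (R3) defining $W_\Gamma$, which gives a well-defined homomorphism. Since $\mu_k$ is an involution on diagrams, a symmetric construction produces $\psi: W_{\Gamma'} \to W_\Gamma$, and a direct computation on generators shows $\phi \circ \psi$ and $\psi \circ \phi$ act as the identity (conjugation by $s_k$ is involutive since $s_k^2 = e$), yielding the isomorphism.

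For (R1), each $\phi(s_i)$ is visibly an involution, since $(s_k s_i s_k)^2 = s_k s_i^2 s_k = s_k^2 = e$. For (R2), one does a case analysis on the subdiagram induced by $\{i, j, k\}$. If neither $i$ nor $j$ is adjacent to $k$, there is nothing to check. If exactly one is adjacent, the product $\phi(s_i)\phi(s_j)$ is a conjugate of $s_i s_j$ by $s_k$, so has the same order, and the mutation rule leaves the edge $i \, \text{---} \, j$ (and hence $m_{ij}$) unchanged. The delicate case is when both $i$ and $j$ are adjacent to $k$: here the mutation rule alters the weight of the $ij$-edge, $m_{ij}$ may differ from $m'_{ij}$, and the desired relation $(\phi(s_i)\phi(s_j))^{m_{ij}} = e$ must be deduced not from (R2) in $W_{\Gamma'}$ alone but from the combined effect of (R2) and (R3) applied to the chordless $3$-cycle through $k$ in $\Gamma'$.

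The main obstacle is verifying (R3), the cycle relations, and this is where I would expect to spend the bulk of the work. A chordless cycle $C$ in $\Gamma$ behaves in one of several qualitatively different ways under $\mu_k$, depending on how $C$ meets the vertex $k$: it may avoid $k$ entirely, pass through $k$, or be disjoint from $k$ but have $k$ adjacent to one or more of its vertices (possibly creating or destroying chords after mutation). I would invoke the classification of chordless cycles and their mutations reviewed in Section \ref{sec:finite-type_diagrams} to enumerate these cases. In each case, the cycle $C$ transforms into a union of cycles (and possibly edges) in $\Gamma'$ whose relations, combined with (R1) and (R2), must imply $\phi(r(i_a, i_{a+1}))^{\text{exp}} = e$ for the appropriate exponent. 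The simply-laced sub-cases (where all weights are $1$) are the cleanest, but the cases with weight $2$ or $3$ edges require keeping track of how $m_{ij}$ and the exponents in (R3)(b) change under mutation, and for these a careful book-keeping of signs in the mutation formula $\pm\sqrt{c}\pm\sqrt{c'} = \sqrt{ab}$ is essential.

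Once both $\phi$ and $\psi$ are shown to be well-defined homomorphisms, the isomorphism claim reduces to checking on generators: for $i$ not adjacent to $k$, both compositions fix $s_i$; for $i$ adjacent to $k$, one composition sends $s_i \mapsto s_k s_i s_k \mapsto s_k(s_k s_i s_k)s_k = s_i$, using that $\mu_k$ reverses the orientation of the edge $ki$ so that the conjugation-by-$s_k$ rule is applied exactly once in each direction. Thus $\phi$ and $\psi$ are mutually inverse and $W_\Gamma \cong W_{\Gamma'}$.
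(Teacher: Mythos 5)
Your overall strategy is essentially the paper's (and that of \cite{BM13}, from which this statement is quoted): send $s_k\mapsto s_k$ and each neighbour $i$ of $k$ either to $s_ks_is_k$ or to $s_i$ according to the orientation of the edge at $k$, check (R1)--(R3) by the case analysis of how edges and chordless cycles transform under $\mu_k$ (Corollary \ref{local_three_picture} and Lemma \ref{lem:chordless_cycles}), and use involutivity of mutation to produce the map in the other direction; this is exactly how the paper proceeds for its Artin analogue (Proposition \ref{prop:ti_relations} and the proof of Theorem \ref{thm:main}). You correctly identify that the real content is the (R2) check when both $i$ and $j$ are adjacent to $k$ and the (R3) check for each cycle type, though these are only sketched in your plan. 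One bookkeeping point: whether one conjugates the sources or the targets of the arrows at $k$ is a convention that must be fixed compatibly with the orientation convention built into the (R3) relations (the paper's map $\varphi$ conjugates $s_i$ exactly when there is an arrow $i\rightarrow k$ in $\Gamma$), and the case-by-case verification has to be carried out for the convention actually chosen.

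The step that fails as written is the last one. With your genuinely symmetric construction ($\phi$ conjugates $s_i$ when $k\rightarrow i$ in $\Gamma$, and $\psi$ conjugates when $k\rightarrow i$ in $\Gamma'$), a vertex $i$ adjacent to $k$ is conjugated by exactly \emph{one} of the two maps, never by both, precisely because $\mu_k$ reverses the edge between $k$ and $i$: if $k\rightarrow i$ in $\Gamma$, then in $\Gamma'$ the arrow is $i\rightarrow k$, so $\psi$ fixes $s_i$ and $\psi(\phi(s_i))=\psi(s_ks_is_k)=s_ks_is_k\neq s_i$ in general. Your displayed computation $s_i\mapsto s_ks_is_k\mapsto s_k(s_ks_is_k)s_k$ presumes both maps conjugate, which contradicts your own rule. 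The compositions are therefore not the identity but conjugation by $s_k$: they fix $s_k$, fix every generator not adjacent to $k$ (which commutes with $s_k$ by (R2)), and send adjacent generators $s_i$ to $s_ks_is_k$. This is easily repaired: since conjugation is an inner automorphism, $\psi\circ\phi$ and $\phi\circ\psi$ are bijective, which forces $\phi$ to be injective and surjective, hence an isomorphism; alternatively, replace $\psi$ by its composite with conjugation by $s_k$ (still a homomorphism) to obtain literal mutual inverses. The latter adjustment is, in effect, what the paper must arrange more carefully in the Artin setting, where $s_k\neq s_k^{-1}$ forces the detour through the opposite diagrams $\Gamma^{op}$ and $(\Gamma')^{op}$ via Lemma \ref{lem:op_homomorphism}; in the Coxeter case at hand that detour is unnecessary, but the identity-of-compositions claim still needs one of these two fixes.
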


The rest of the paper will be devoted to building up analogous relations, defined in Definition~\ref{grp def} to prove a similar result in the case of Artin groups. For $\Gamma$ a diagram of finite type, we define the Artin group associated to $\Gamma$ as in Section \ref{sec:defn_artingroup}. We will then use the group relations presented in this definition to prove the mutation invariance of $A_{\Gamma}$ in Section \ref{sec:main_result}.

\section{Diagrams of Finite Type}
\label{sec:finite-type_diagrams}

In this section, we shall review the structure of diagrams of finite type, and how their cycles are affected by mutation. This section is simply a recap of \cite[Section 2]{BM13}. First, in Proposition ~\ref{cycle_types}, all types of chordless cycles in diagrams of finite type are classified. Second, in Corollary ~\ref{local_three_picture} all possible local pictures between a mutated vertex and two adjacent vertices are drawn. Finally, in Lemma ~\ref{lem:chordless_cycles}, all chordless cycles introduced from a mutation are drawn. These three lemmas will be crucial in proving the main result Theorem ~\ref{thm:main}, as they will allow us to inspect precisely which relations are added and removed after mutating at a prescribed vertex.

\begin{defn}
\label{chordlesscycle}
A {\it chordless cycle} of an unoriented graph $G$ is a connected subgraph $H \subset G$ such that the number of vertices in $H$ is equal to the number of edges in $H,$ and the edges in $H$ form a single cycle.
\end{defn}

\begin{prop}
\label{cycle_types}
\cite[Proposition 2.1]{BM13} Let $\Gamma$ be a diagram of finite type. Then, a chordless cycle in the unoriented graph of $\Gamma$ is cyclically oriented in $\Gamma$. Furthermore, the unoriented graph underlying the cycle must either be a cycle such that all edges have weight 1, a square with two opposite edges of weight 2 and two opposite edges of weight 1, or a triangle with two edges of weight 2 and one of weight 1, as pictured in Figure \ref{fig:cycle_types}.
\end{prop}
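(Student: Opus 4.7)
Plan. The proof rests on two facts from \cite{FZ02, FZ03}: $\Gamma$ is of finite type if and only if it is 2-finite (so every edge weight of $\Gamma$ lies in $\{1,2,3\}$), and 2-finiteness is preserved under mutation. My strategy is to produce, from a putative chordless cycle with a forbidden structure, a mutation of $\Gamma$ whose 2-finiteness fails.

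Step 1 (Cyclic orientation). I would first show that any chordless cycle $C$ of length $d \geq 3$ in $\Gamma$ is cyclically oriented. Suppose not: then $C$ must contain some vertex $v$ sitting on an oriented subpath $u \to v \to w$ of $C$ (since orientations must disagree somewhere, there has to be both an oriented path through some vertex of $C$ and a source/sink elsewhere). Chordlessness forces $u$ and $w$ to be non-adjacent in $\Gamma$, so the mutation rule creates a new edge between $u$ and $w$ in $\mu_v(\Gamma)$ of weight equal to the product of the two cycle-edge weights at $v$. The remaining edges of $C$, together with this new edge, form a chordless-or-chord-creating cycle of length $d-1$ in $\mu_v(\Gamma)$. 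Proceeding by induction on $d$ (with base case $d = 3$, where 2-finiteness of all triangles in the mutation class directly pins down their orientation), this reduction either terminates at a cyclically oriented configuration or yields a contradiction via an edge of weight exceeding $3$.

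Step 2 (Weight classification). For $d = 3$, applying the mutation rule $\sqrt{c'} = |\sqrt{ab} - \sqrt{c}|$ at each vertex of a cyclically oriented triangle with weights $(a,b,c) \in \{1,2,3\}^3$ and demanding the result lie in $\{0,1,2,3\}$ (and likewise after subsequent mutations) singles out exactly $(1,1,1)$ and $(1,2,2)$ up to rotation. For $d \geq 4$, three consecutive vertices $i_{a-1} \to i_a \to i_{a+1}$ of $C$ have $i_{a-1}, i_{a+1}$ non-adjacent in $\Gamma$, so mutation at $i_a$ creates an edge of weight $w_{a-1} w_a$ between them; for 2-finiteness this product must lie in $\{1,2,3\}$. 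Hence around $C$ at most every other edge can have weight $> 1$. A second round of mutations then rules out weight $3$ entirely for $d \geq 4$ (e.g.\ a square of weights $(1,3,1,3)$ mutates in two steps to a triangle with an edge of weight $16$), and further eliminates every weight-$2$ pattern except the alternating square $(1,2,1,2)$; cycles of length $\geq 5$ are thus forced to have all weights $1$.

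Main obstacle. The delicate portion is the case analysis for $d \geq 4$: while the adjacency constraint $w_{a-1} w_a \leq 3$ falls out of a single mutation, ruling out the remaining configurations (such as $(1,3,1,3)$, the pentagon $(2,1,2,1,2)$, or mixed squares like $(1,2,1,3)$) requires composing two or three successive mutations and tracking how the resulting triangle or long edge fails 2-finiteness. Organizing this casework so that no pattern is overlooked is the main bookkeeping challenge in the proof.
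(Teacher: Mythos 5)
Note first that the paper does not actually prove this proposition: it is quoted verbatim from \cite{BM13} (Proposition 2.1 there), and Barot--Marsh in turn derive the cyclic-orientation statement from Fomin--Zelevinsky \cite{FZ03}. So you are attempting a from-scratch mutation argument for something the paper only cites; that is legitimate, but the argument must then stand on its own, and as written Step 1 has a genuine gap. You assert that a chordless cycle $C$ which is not cyclically oriented must contain a ``flow-through'' vertex $v$ lying on an oriented subpath $u \to v \to w$ of $C$. This is false in general: if sources and sinks alternate around an even cycle (e.g.\ a $4$-cycle with two sources and two sinks), every vertex of $C$ is a source or a sink and no such $v$ exists, so your descent never gets started in exactly the case it is supposed to kill. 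Moreover, even when $v$ exists, the induction silently requires that the contracted $(d-1)$-cycle in $\mu_v(\Gamma)$ is again chordless and again not cyclically oriented. Both facts are true --- the new arrow is oriented $u \to w$ with weight equal to the product of the two incident weights (per the mutation rule with $c=0$), so the flow pattern and the source/sink count of the remaining vertices are preserved, and no chord can appear among $V(C)\setminus\{v\}$ because $C$ was chordless --- but they are the crux of the descent and must be verified, not assumed. The alternating-orientation case can be patched (mutate first at a sink of $C$, which only reverses its two cycle arrows, creates no chord of $C$, and produces a flow-through vertex), after which the reduction terminates at a non-cyclically-oriented $3$-cycle, contradicting the $2$-finiteness fact quoted from \cite{FZ02}; but none of this is in your write-up.

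Step 2 is essentially sound in outline: the adjacency constraint $w_{a-1}w_a \le 3$ from one mutation, plus iterated contraction to triangles, does reduce everything to the $3$-cycle casework, and demanding that mutated weights lie in $\{0,1,2,3\}$ (which tacitly uses that weights $|B_{ij}B_{ji}|$ are integers --- say so) correctly eliminates cases such as the all-weight-$2$ triangle. Two smaller points: your $(1,3,1,3)$ example is miscomputed --- mutating at a vertex of the square gives a triangle with weights $(3,1,3)$, and mutating at the vertex between the two weight-$3$ edges then yields an edge of weight $(\sqrt{9}-\sqrt{1})^2 = 4$, not $16$, which still contradicts $2$-finiteness but shows the bookkeeping needs care; and triangles such as $(1,1,2)$ are excluded not by an oversized weight but because the signed equation $\sqrt{c}+\sqrt{c'}=\sqrt{ab}$ admits no solution with $c' \ge 0$ (equivalently, integrality/realizability fails), so your contradiction criterion should be stated broadly enough to cover that mode of failure as well.
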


\begin{figure}[h]
\hspace*{-1.2cm}
\begin{tikzpicture}[baseline=-0.5ex]
 \matrix[matrix of math nodes,column sep={40pt,between origins},row
    sep={40pt,between origins}] at (0,0)
  {
    & &|[name = ul]|\circ& |[name = ur]|\circ&\\
    &|[name=lu]| \circ &&&|[name=ru]| \circ \\
     & |[name = ld]|\circ&&& |[name = rd]|\circ\\
    &&|[name=dl]| \circ &|[name=dr]| \circ &\\
  };
\draw
  (ul) -- node[above] {1} (ur)
  ;
  \draw
  (ul) -- node[left]{1}(lu)
  ;
  \draw
  (dr) -- node[right]{1}(rd)
  ;
  \draw
  (lu) -- node[left]{1}(ld)
  ;
  \draw
  (ru) -- node[right]{1}(rd)
  ;
  \draw
  (ur) -- node[right]{1}(ru)
  ;
  \draw
  (dl) -- node[left]{1}(ld)
  ;
  \draw[dotted]
  (dl) -- (dr)
  ;
 \matrix[matrix of math nodes,column sep={40pt,between origins},row
    sep={40pt,between origins}] at (5,0)
  {
    & |[name = ul]|\circ& |[name = ur]|\circ\\
    &|[name=dl]| \circ &|[name=dr]| \circ \\
  };
\draw
  (ul) -- node[above] {1} (ur)
  ;
  \draw
  (dl) -- node[left]{2}(ul)
  ;
  \draw
  (dr) -- node[below]{1}(dl)
  ;
  \draw
  (ur) -- node[right]{2} (dr)
  ;
   \matrix[matrix of math nodes,column sep={25pt,between origins},row
    sep={40pt,between origins}] at (9,0)
  {
    && |[name = u]|\circ& \\
    &|[name=dl]| \circ & &|[name=dr]| \circ \\
  };
\draw
  (u) -- node[right] {2} (dr)
  ;
  \draw
  (dl) -- node[left]{2}(u)
  ;
  \draw
  (dr) -- node[below]{1}(dl)
  ;
  
   \end{tikzpicture}
   
\caption{Possible chordless cycles in a diagram of finite type, as found in \cite[Figure 2]{BM13}} \label{fig:cycle_types}
\end{figure}
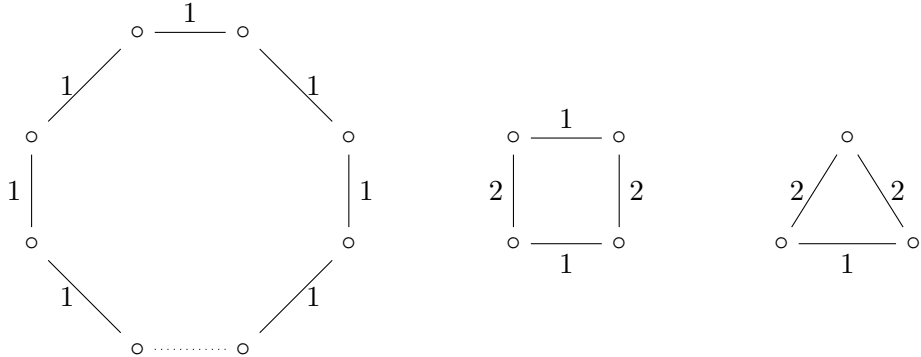

\begin{cor}
\label{local_three_picture}
\cite[Corollary 2.3]{BM13} Let $\Gamma$ be a graph of finite type and suppose there are three vertices, labeled $i,j,k$ with both $i,j$ connected to $k.$ Then mutation at $k$ on the induced subdiagram appear as in Figure \ref{fig:local_three}, either from left to right or right to left, up to switching $i$ and $j,$
\end{cor}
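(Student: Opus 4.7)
The plan is to proceed by a case analysis on the induced subdiagram on $\{i,j,k\}$, using the fact that a finite type diagram is 2-finite, so every edge weight lies in $\{1,2,3\}$, and any triangle appearing in $\Gamma$ must be cyclically oriented by Proposition~\ref{cycle_types}. Let $a$ and $b$ denote the weights on the edges $i \-- k$ and $j \-- k$ respectively, and let $c$ denote the weight on the edge $i \-- j$ (set $c=0$ if no such edge exists). The local picture is then determined by $(a,b,c)$ together with the orientations.

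First I would split into two broad cases according to whether $i$ and $j$ are connected in $\Gamma$. In the disconnected case, the only data is $(a,b)$ and the orientations of the two edges at $k$; after applying the mutation rule from the proposition preceding Notation~\ref{defn:mij}, the edges at $k$ reverse orientations and an edge $i \-- j$ of weight $c'$ may be introduced, determined by $\pm\sqrt{c}\pm\sqrt{c'} = \sqrt{ab}$. In the connected case, the triangle is cyclically oriented by Proposition~\ref{cycle_types}, and by that same proposition the weights $(a,b,c)$ must, up to permutation, be either $(1,1,1)$ or a pair of $2$'s with a $1$. The mutation rule then gives $c'$, and I would record whether the new triangle is again cyclically oriented (by checking the sign convention).

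For each admissible triple $(a,b,c)$ I would compute $c'$ explicitly and retain only those configurations for which $c' \in \{0,1,2,3\}$, so that $\mu_k(\Gamma)$ is again 2-finite. As a sanity check on each row of the resulting table I would verify that applying $\mu_k$ twice returns the original local picture, which is forced since mutation is an involution. The surviving configurations then give precisely the pairs of diagrams drawn in Figure~\ref{fig:local_three}, read left-to-right or right-to-left, up to swapping $i$ and $j$.

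The main obstacle I expect is ensuring the enumeration is exhaustive and that each exclusion is properly justified: combinations like $(a,b)=(3,3)$ or $(a,b,c)=(2,2,2)$ are ruled out not by the mutation formula alone but by the observation that the resulting subdiagram cannot embed in any finite type diagram. To handle this cleanly I would cross-reference each surviving case against the list of rank-$3$ finite type diagrams coming from the classification of Section~\ref{sec:finite-type_diagrams}, discarding any triple $(a,b,c)$ whose local picture, before or after mutation, violates 2-finiteness or fails to be a subdiagram of some finite type Dynkin diagram.
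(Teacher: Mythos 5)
Your plan is sound: the paper does not reprove this statement but simply quotes it as \cite[Corollary 2.3]{BM13}, and the argument there is exactly the kind of exhaustive local case analysis you describe --- enumerate the possible weights and orientations on the edges among $i,j,k$, apply the mutation rule for $c'$, and discard configurations incompatible with finite type (2-finiteness, cyclically oriented triangles as in Proposition~\ref{cycle_types}, and embeddability as a rank-3 subdiagram of a finite type diagram). So your proposal is correct and essentially the same approach as the cited source; just make sure the enumeration includes the orientation patterns where both arrows at $k$ point inward or both outward (which produce no new edge $i\--\ j$), as in cases (a) and (c) of Figure~\ref{fig:local_three}.
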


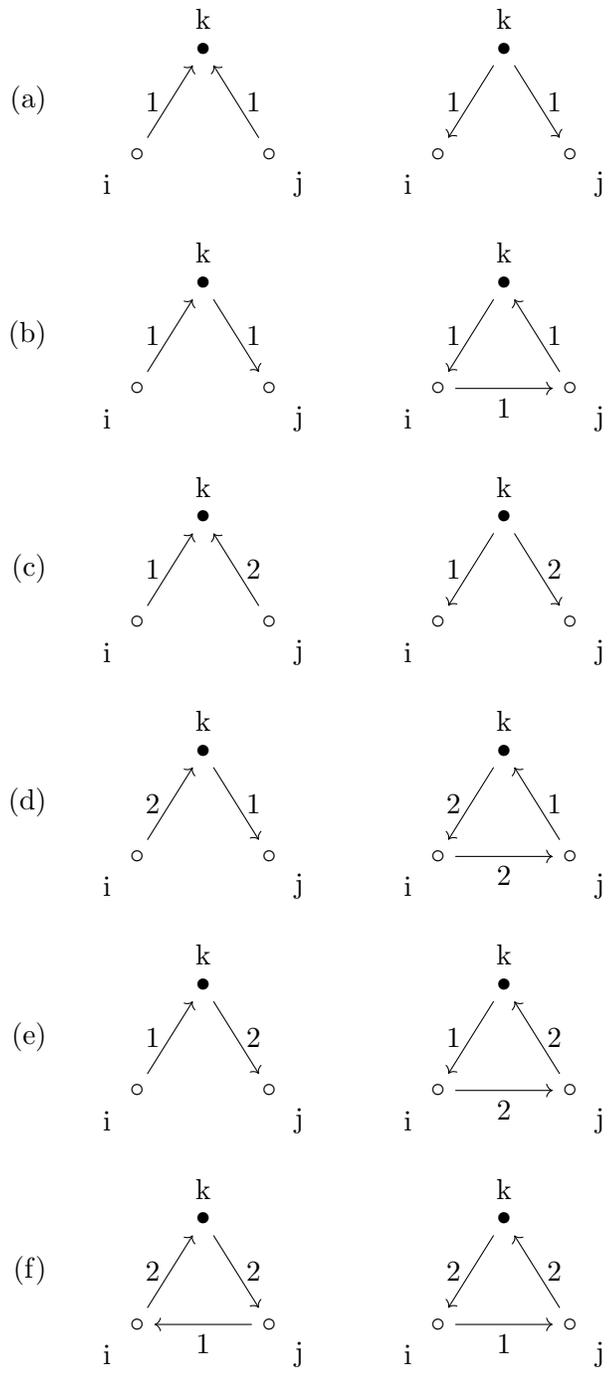
\begin{figure}
\begin{enumerate}[(a)]
\item
\begin{tikzpicture}[baseline=-0.5ex]
\matrix[matrix of math nodes,column sep={25pt,between origins},row
    sep={40pt,between origins}] at (0,0)
  {
    && |[name = u]|\bullet& \\
    &|[name=dl]| \circ & &|[name=dr]| \circ \\
  };
\draw[->]
  (dr) to node[right] {1} (u)
  ;
  \draw[->]
  (dl)to node[left]{1}(u)
  ;
  \node [yshift = .4 cm] at (u){k};
    \node [yshift = -.4 cm,xshift = .4 cm] at (dr){j};
    \node [yshift = -.4 cm,xshift = -.4 cm] at (dl){i};
    \matrix[matrix of math nodes,column sep={25pt,between origins},row
    sep={40pt,between origins}] at (4,0)
  {
    && |[name = u]|\bullet& \\
    &|[name=dl]| \circ & &|[name=dr]| \circ \\
  };
\draw[->]
   (u)to node[right] {1} (dr)
  ;
  \draw[->]
  (u) to node[left]{1}(dl)
  ;
  \node [yshift = .4 cm] at (u){k};
    \node [yshift = -.4 cm,xshift = .4 cm] at (dr){j};
    \node [yshift = -.4 cm,xshift = -.4 cm] at (dl){i};
   \end{tikzpicture}
\item
\begin{tikzpicture}[baseline=-0.5ex]
\matrix[matrix of math nodes,column sep={25pt,between origins},row
    sep={40pt,between origins}] at (0,0)
  {
    && |[name = u]|\bullet& \\
    &|[name=dl]| \circ & &|[name=dr]| \circ \\
  };
\draw[->]
  (u) to node[right] {1} (dr)
  ;
  \draw[->]
  (dl)to node[left]{1}(u)
  ;
  \node [yshift = .4 cm] at (u){k};
    \node [yshift = -.4 cm,xshift = .4 cm] at (dr){j};
    \node [yshift = -.4 cm,xshift = -.4 cm] at (dl){i};
    \matrix[matrix of math nodes,column sep={25pt,between origins},row
    sep={40pt,between origins}] at (4,0)
  {
    && |[name = u]|\bullet& \\
    &|[name=dl]| \circ & &|[name=dr]| \circ \\
  };
\draw[->]
  (dr) to node[right] {1}(u)
  ;
  \draw[->]
  (u) to node[left]{1}(dl)
  ;
  \draw[->]
  (dl) to node[below]{1}(dr)
  ;
  \node [yshift = .4 cm] at (u){k};
    \node [yshift = -.4 cm,xshift = .4 cm] at (dr){j};
    \node [yshift = -.4 cm,xshift = -.4 cm] at (dl){i};
   \end{tikzpicture}

\item
\begin{tikzpicture}[baseline=-0.5ex]
\matrix[matrix of math nodes,column sep={25pt,between origins},row
    sep={40pt,between origins}] at (0,0)
  {
    && |[name = u]|\bullet& \\
    &|[name=dl]| \circ & &|[name=dr]| \circ \\
  };
\draw[->]
  (dr) to node[right] {2} (u)
  ;
  \draw[->]
  (dl)to node[left]{1}(u)
  ;
  \node [yshift = .4 cm] at (u){k};
    \node [yshift = -.4 cm,xshift = .4 cm] at (dr){j};
    \node [yshift = -.4 cm,xshift = -.4 cm] at (dl){i};
    \matrix[matrix of math nodes,column sep={25pt,between origins},row
    sep={40pt,between origins}] at (4,0)
  {
    && |[name = u]|\bullet& \\
    &|[name=dl]| \circ & &|[name=dr]| \circ \\
  };
\draw[->]
   (u)to node[right] {2} (dr)
  ;
  \draw[->]
  (u) to node[left]{1}(dl)
  ;
  \node [yshift = .4 cm] at (u){k};
    \node [yshift = -.4 cm,xshift = .4 cm] at (dr){j};
    \node [yshift = -.4 cm,xshift = -.4 cm] at (dl){i};
   \end{tikzpicture}
   \item
\begin{tikzpicture}[baseline=-0.5ex]
\matrix[matrix of math nodes,column sep={25pt,between origins},row
    sep={40pt,between origins}] at (0,0)
  {
    && |[name = u]|\bullet& \\
    &|[name=dl]| \circ & &|[name=dr]| \circ \\
  };
\draw[->]
  (u) to node[right] {1} (dr)
  ;
  \draw[->]
  (dl) to node[left]{2}(u)
  ;
  \node [yshift = .4 cm] at (u){k};
    \node [yshift = -.4 cm,xshift = .4 cm] at (dr){j};
    \node [yshift = -.4 cm,xshift = -.4 cm] at (dl){i};
    \matrix[matrix of math nodes,column sep={25pt,between origins},row
    sep={40pt,between origins}] at (4,0)
  {
    && |[name = u]|\bullet& \\
    &|[name=dl]| \circ & &|[name=dr]| \circ \\
  };
\draw[->]
  (dr)to node[right] {1} (u)
  ;
  \draw[->]
  (u) to node[left]{2}(dl)
  ;
  \draw[->]
  (dl) to node[below]{2}(dr)
  ;
  \node [yshift = .4 cm] at (u){k};
    \node [yshift = -.4 cm,xshift = .4 cm] at (dr){j};
    \node [yshift = -.4 cm,xshift = -.4 cm] at (dl){i};
   \end{tikzpicture}
   \item
\begin{tikzpicture}[baseline=-0.5ex]
\matrix[matrix of math nodes,column sep={25pt,between origins},row
    sep={40pt,between origins}] at (0,0)
  {
    && |[name = u]|\bullet& \\
    &|[name=dl]| \circ & &|[name=dr]| \circ \\
  };
\draw[->]
  (u) to node[right] {2} (dr)
  ;
  \draw[->]
  (dl) to node[left]{1}(u)
  ;
  \node [yshift = .4 cm] at (u){k};
    \node [yshift = -.4 cm,xshift = .4 cm] at (dr){j};
    \node [yshift = -.4 cm,xshift = -.4 cm] at (dl){i};
    \matrix[matrix of math nodes,column sep={25pt,between origins},row
    sep={40pt,between origins}] at (4,0)
  {
    && |[name = u]|\bullet& \\
    &|[name=dl]| \circ & &|[name=dr]| \circ \\
  };
\draw[->]
  (dr) to node[right] {2} (u)
  ;
  \draw[->]
  (u) to node[left]{1}(dl)
  ;
  \draw[->]
  (dl) to node[below]{2}(dr)
  ;
  \node [yshift = .4 cm] at (u){k};
    \node [yshift = -.4 cm,xshift = .4 cm] at (dr){j};
    \node [yshift = -.4 cm,xshift = -.4 cm] at (dl){i};
   \end{tikzpicture}
\item
\begin{tikzpicture}[baseline=-0.5ex]
\matrix[matrix of math nodes,column sep={25pt,between origins},row
    sep={40pt,between origins}] at (0,0)
  {
    && |[name = u]|\bullet& \\
    &|[name=dl]| \circ & &|[name=dr]| \circ \\
  };
\draw[->]
   (u) to node[right] {2}(dr)
  ;
  \draw[->]
  (dl) to node[left]{2}(u)
  ;
  \draw[->]
  (dr) to node[below]{1}(dl)
  ;
  \node [yshift = .4 cm] at (u){k};
    \node [yshift = -.4 cm,xshift = .4 cm] at (dr){j};
    \node [yshift = -.4 cm,xshift = -.4 cm] at (dl){i};
    \matrix[matrix of math nodes,column sep={25pt,between origins},row
    sep={40pt,between origins}] at (4,0)
  {
    && |[name = u]|\bullet& \\
    &|[name=dl]| \circ & &|[name=dr]| \circ \\
  };
\draw[->]
  (dr) to node[right] {2} (u)
  ;
  \draw[->]
  (u) to node[left]{2}(dl)
  ;
  \draw[->]
  (dl) to node[below]{1}(dr)
  ;
  \node [yshift = .4 cm] at (u){k};
    \node [yshift = -.4 cm,xshift = .4 cm] at (dr){j};
    \node [yshift = -.4 cm,xshift = -.4 cm] at (dl){i};
   \end{tikzpicture}
\end{enumerate}

\caption{Local picture of the mutation of a finite type diagram, as found in \cite[Figure 4]{BM13}} \label{fig:local_three}
\end{figure}

\begin{figure}
\begin{enumerate}[(a)]
\item
\begin{tikzpicture}[baseline=-0.5ex]
\matrix[matrix of math nodes,column sep={25pt,between origins},row
    sep={40pt,between origins}] at (0,0)
  {
    && |[name = u]|\bullet& \\
    &|[name=dl]| \circ & &|[name=dr]| \circ \\
  };
\draw[->]
  (dr) to node[right] {1} (u)
  ;
  \draw[->]
  (u) to node[left]{1}(dl)
  ;
  \node [yshift = .4 cm] at (u){k=2};
    \node [yshift = -.4 cm,xshift = .4 cm] at (dr){3};
    \node [yshift = -.4 cm,xshift = -.4 cm] at (dl){1};
    \matrix[matrix of math nodes,column sep={25pt,between origins},row
    sep={40pt,between origins}] at (4,0)
  {
    && |[name = u]|\bullet& \\
    &|[name=dl]| \circ & &|[name=dr]| \circ \\
  };
\draw[->]
  (u) to node[right] {1} (dr)
  ;
  \draw[->]
  (dl) to node[left]{1}(u)
  ;
  \draw[->]
  (dr) to node[below]{1}(dl)
  ;
  \node [yshift = .4 cm] at (u){k=2};
    \node [yshift = -.4 cm,xshift = .4 cm] at (dr){3};
    \node [yshift = -.4 cm,xshift = -.4 cm] at (dl){1};
    \node [yshift = -.9 cm] at (u){$C'$};
   \end{tikzpicture}
   \item
\begin{tikzpicture}[baseline=-0.5ex]
\matrix[matrix of math nodes,column sep={25pt,between origins},row
    sep={40pt,between origins}] at (0,0)
  {
    && |[name = u]|\bullet& \\
    &|[name=dl]| \circ & &|[name=dr]| \circ \\
  };
\draw[->]
  (dr) to node[right] {1} (u)
  ;
  \draw[->]
  (u) to node[left]{2}(dl)
  ;
  \node [yshift = .4 cm] at (u){k=2};
    \node [yshift = -.4 cm,xshift = .4 cm] at (dr){3};
    \node [yshift = -.4 cm,xshift = -.4 cm] at (dl){1};
    \matrix[matrix of math nodes,column sep={25pt,between origins},row
    sep={40pt,between origins}] at (4,0)
  {
    && |[name = u]|\bullet& \\
    &|[name=dl]| \circ & &|[name=dr]| \circ \\
  };
\draw[->]
  (u) to node[right] {1} (dr)
  ;
  \draw[->]
  (dl) to node[left]{2}(u)
  ;
  \draw[->]
  (dr) to node[below]{2}(dl)
  ;
  \node [yshift = .4 cm] at (u){k=2};
    \node [yshift = -.4 cm,xshift = .4 cm] at (dr){3};
    \node [yshift = -.4 cm,xshift = -.4 cm] at (dl){1};
    \node [yshift = -.9 cm] at (u){$C'$};
   \end{tikzpicture}
   \item
\begin{tikzpicture}[baseline=-0.5ex]
\matrix[matrix of math nodes,column sep={25pt,between origins},row
    sep={40pt,between origins}] at (0,0)
  {
    && |[name = u]|\bullet& \\
    &|[name=dl]| \circ & &|[name=dr]| \circ \\
  };
\draw[->]
  (dr) to node[right] {2} (u)
  ;
  \draw[->]
  (u) to node[left]{1}(dl)
  ;
  \node [yshift = .4 cm] at (u){k=2};
    \node [yshift = -.4 cm,xshift = .4 cm] at (dr){3};
    \node [yshift = -.4 cm,xshift = -.4 cm] at (dl){1};
    \matrix[matrix of math nodes,column sep={25pt,between origins},row
    sep={40pt,between origins}] at (4,0)
  {
    && |[name = u]|\bullet& \\
    &|[name=dl]| \circ & &|[name=dr]| \circ \\
  };
\draw[->]
  (u) to node[right] {2} (dr)
  ;
  \draw[->]
  (dl) to node[left]{1}(u)
  ;
  \draw[->]
  (dr) to node[below]{2}(dl)
  ;
  \node [yshift = .4 cm] at (u){k=2};
    \node [yshift = -.4 cm,xshift = .4 cm] at (dr){3};
    \node [yshift = -.4 cm,xshift = -.4 cm] at (dl){1};
    \node [yshift = -.9 cm] at (u){$C'$};
   \end{tikzpicture}
\item
\begin{tikzpicture}[baseline=-0.5ex]
\matrix[matrix of math nodes,column sep={25pt,between origins},row
    sep={40pt,between origins}] at (0,0)
  {
    && |[name = u]|\bullet& \\
    &|[name=dl]| \circ & &|[name=dr]| \circ \\
  };
\draw[->]
  (dr) to node[right] {2} (u)
  ;
  \draw[->]
  (u) to node[left]{2}(dl)
  ;
  \draw[->]
  (dl) to node[below]{1}(dr)
  ;
  \node [yshift = .4 cm] at (u){k=2};
    \node [yshift = -.4 cm,xshift = .4 cm] at (dr){3};
    \node [yshift = -.4 cm,xshift = -.4 cm] at (dl){1};
    \matrix[matrix of math nodes,column sep={25pt,between origins},row
    sep={40pt,between origins}] at (4,0)
  {
    && |[name = u]|\bullet& \\
    &|[name=dl]| \circ & &|[name=dr]| \circ \\
  };
\draw[->]
  (u) to node[right] {2} (dr)
  ;
  \draw[->]
  (dl) to node[left]{2}(u)
  ;
  \draw[->]
  (dr) to node[below]{1}(dl)
  ;
  \node [yshift = .4 cm] at (u){k=2};
    \node [yshift = -.4 cm,xshift = .4 cm] at (dr){3};
    \node [yshift = -.4 cm,xshift = -.4 cm] at (dl){1};
    \node [yshift = -.9 cm] at (u){$C'$};
   \end{tikzpicture}
\item
\begin{tikzpicture}[baseline=-0.5ex]
\matrix[matrix of math nodes,column sep={40pt,between origins},row
    sep={40pt,between origins}] at (0,0)
  {
    & |[name = ul]|\bullet& |[name = ur]|\circ\\
    &|[name=dl]| \circ &|[name=dr]| \circ \\
  };
\draw[->]
  (ur) to node[above] {1} (ul)
  ;
  \draw[->]
  (ul) to node[left]{2}(dl)
  ;
  \draw[->]
  (dr) to node[below]{1}(dl)
  ;
  \draw[->]
  (ur) to node[right]{2} (dr)
  ;
\draw[->]
  (dl) to node[above]{2} (ur)
  ;
    \node [yshift = .4 cm,xshift = -.4 cm] at (ul){k=1};
   \node [yshift = .4 cm,xshift = .4 cm] at (ur){2};
    \node [yshift = -.4 cm,xshift = .4 cm] at (dr){3};
    \node [yshift = -.4 cm,xshift = -.4 cm] at (dl){4};
    \matrix[matrix of math nodes,column sep={40pt,between origins},row
    sep={40pt,between origins}] at (4,0)
  {
    & |[name = ul]|\bullet& |[name = ur]|\circ\\
    &|[name=dl]| \circ &|[name=dr]| \circ \\
  };
\draw[->]
  (ul) to node[above] {1} (ur)
  ;
  \draw[->]
  (dl) to node[left]{2}(ul)
  ;
  \draw[->]
  (dr) to node[below]{1}(dl)
  ;
  \draw[->]
  (ur) to node[right]{2} (dr)
  ;
    \node [yshift = .4 cm,xshift = -.4 cm] at (ul){k=1};
   \node [yshift = .4 cm,xshift = .4 cm] at (ur){2};
    \node [yshift = -.4 cm,xshift = .4 cm] at (dr){3};
    \node [yshift = -.4 cm,xshift = -.4 cm] at (dl){4};
  \node [yshift = -.7 cm,xshift = .7 cm] at (ul){$C'$};
   \end{tikzpicture}
   
\item \begin{tikzpicture}[baseline=-0.5ex]
\matrix[matrix of math nodes,column sep={40pt,between origins},row
    sep={40pt,between origins}] at (0,0)
  {
    & |[name = ul]|\circ& |[name = ur]|\bullet\\
    &|[name=dl]| \circ &|[name=dr]| \circ \\
  };
\draw[->]
  (ur) to node[above] {1} (ul)
  ;
  \draw[->]
  (dl) to node[left]{2}(ul)
  ;
  \draw[->]
  (dr) to node[below]{1}(dl)
  ;
  \draw[->]
  (dr) to node[right]{2} (ur)
  ;
\draw[->]
  (ul) to node[above]{2} (dr)
  ;
    \node [yshift = .4 cm,xshift = -.4 cm] at (ul){1};
   \node [yshift = .4 cm,xshift = .4 cm] at (ur){k=2};
    \node [yshift = -.4 cm,xshift = .4 cm] at (dr){3};
    \node [yshift = -.4 cm,xshift = -.4 cm] at (dl){4};
    \matrix[matrix of math nodes,column sep={40pt,between origins},row
    sep={40pt,between origins}] at (4,0)
  {
    & |[name = ul]|\circ& |[name = ur]|\bullet\\
    &|[name=dl]| \circ &|[name=dr]| \circ \\
  };
\draw[->]
  (ul) to node[above] {1} (ur)
  ;
  \draw[->]
  (dl) to node[left]{2}(ul)
  ;
  \draw[->]
  (dr) to node[below]{1}(dl)
  ;
  \draw[->]
  (ur) to node[right]{2} (dr)
  ;
    \node [yshift = .4 cm,xshift = -.4 cm] at (ul){1};
   \node [yshift = .4 cm,xshift = .4 cm] at (ur){k=2};
    \node [yshift = -.4 cm,xshift = .4 cm] at (dr){3};
    \node [yshift = -.4 cm,xshift = -.4 cm] at (dl){4};
  \node [yshift = -.7 cm,xshift = .7 cm] at (ul){$C'$};
   \end{tikzpicture}

   \item
   \begin{tikzpicture}[baseline=-0.5ex]

    \matrix[matrix of math nodes,column sep={40pt,between origins},row
    sep={40pt,between origins}] at (0,0)
  {
    & |[name = ul]|\bullet& |[name = ur]|\circ\\
    &|[name=dl]| \circ &|[name=dr]| \circ \\
  };
\draw[->]
  (ul) to node[above] {1} (ur)
  ;
  \draw[->]
  (dl) to node[left]{2}(ul)
  ;
  \draw[->]
  (dr) to node[below]{1}(dl)
  ;
  \draw[->]
  (ur) to node[right]{2} (dr)
  ;
    \node [yshift = .4 cm,xshift = -.4 cm] at (ul){k=1};
   \node [yshift = .4 cm,xshift = .4 cm] at (ur){2};
    \node [yshift = -.4 cm,xshift = .4 cm] at (dr){3};
    \node [yshift = -.4 cm,xshift = -.4 cm] at (dl){4};
 
   \matrix[matrix of math nodes,column sep={40pt,between origins},row
    sep={40pt,between origins}] at (4,0)
  {
    & |[name = ul]|\bullet& |[name = ur]|\circ\\
    &|[name=dl]| \circ &|[name=dr]| \circ \\
  };
\draw[->]
  (ur) to node[above] {1} (ul)
  ;
  \draw[->]
  (ul) to node[left]{2}(dl)
  ;
  \draw[->]
  (dr) to node[below]{1}(dl)
  ;
  \draw[->]
  (ur) to node[right]{2} (dr)
  ;
\draw[->]
  (dl) to node[above]{2} (ur)
  ;
    \node [yshift = .4 cm,xshift = -.4 cm] at (ul){k=1};
   \node [yshift = .4 cm,xshift = .4 cm] at (ur){2};
    \node [yshift = -.4 cm,xshift = .4 cm] at (dr){3};
    \node [yshift = -.4 cm,xshift = -.4 cm] at (dl){4};
  \node [yshift = -.95 cm,xshift = .95 cm] at (ul){$C'$};
    \end{tikzpicture}
    \end{enumerate}
	
\caption{Induced subdiagrams of $\Gamma$ and corresponding chordless cycles in $\Gamma' = \mu_k(\Gamma),$ as found in \cite[Figure 5]{BM13}} \label{fig:first_cycle_types}
\end{figure}

\begin{figure}
\begin{enumerate}
    
   \item[(h)]
   \begin{tikzpicture}[baseline=-0.5ex]

    \matrix[matrix of math nodes,column sep={40pt,between origins},row
    sep={40pt,between origins}] at (0,0)
  {
    & |[name = ul]|\circ& |[name = ur]|\bullet\\
    &|[name=dl]| \circ &|[name=dr]| \circ \\
  };
\draw[->]
  (ul) to node[above] {1} (ur)
  ;
  \draw[->]
  (dl) to node[left]{2}(ul)
  ;
  \draw[->]
  (dr) to node[below]{1}(dl)
  ;
  \draw[->]
  (ur) to node[right]{2} (dr)
  ;

    \node [yshift = .4 cm,xshift = -.4 cm] at (ul){1};
   \node [yshift = .4 cm,xshift = .4 cm] at (ur){k=2};
    \node [yshift = -.4 cm,xshift = .4 cm] at (dr){3};
    \node [yshift = -.4 cm,xshift = -.4 cm] at (dl){4};
 
   \matrix[matrix of math nodes,column sep={40pt,between origins},row
    sep={40pt,between origins}] at (4,0)
  {
    & |[name = ul]|\circ & |[name = ur]|\bullet\\
    &|[name=dl]| \circ &|[name=dr]| \circ \\
  };
\draw[->]
  (ur) to node[above] {1} (ul)
  ;
  \draw[->]
  (dl) to node[left]{2}(ul)
  ;
  \draw[->]
  (dr) to node[below]{1}(dl)
  ;
  \draw[->]
  (dr) to node[right]{2} (ur)
  ;
\draw[->]
  (ul) to node[above]{2} (dr)
  ;
    \node [yshift = .4 cm,xshift = -.4 cm] at (ul){1};
   \node [yshift = .4 cm,xshift = .4 cm] at (ur){k=2};
    \node [yshift = -.4 cm,xshift = .4 cm] at (dr){3};
    \node [yshift = -.4 cm,xshift = -.4 cm] at (dl){4};
  \node [yshift = -.95 cm,xshift = .3 5 cm] at (ul){$C'$};
    \end{tikzpicture}
   \item[(i)]
   \begin{tikzpicture}[baseline=-0.5ex]
    \matrix[matrix of math nodes,column sep={40pt,between origins},row
    sep={25pt,between origins}] at (0,0)
  {
    & |[name = ul]|\circ& |[name = um]|\circ & |[name = ur]|\circ&\\
    |[name = ml]|\bullet& &&& |[name = mr]|\vdots \\
    & |[name = dl]|\circ& |[name = dm]|\circ & |[name = dr]|\circ&\\
  };
\draw[->]
  (ml) to node[above] {1} (ul)
  ;
  \draw[->]
  (ul) to node[above]{1}(um)
  ;
  \draw[->]
  (um) to node[above]{1}(ur)
  ;
  \draw[->]
  (ur) [bend left] to (mr)
  ;
  \draw[->]
  (mr) [bend left] to (dr)
  ;
  \draw[->]
  (dr) to node[below]{1}(dm)
  ;
  \draw[->]
  (dm) to node[below]{1}(dl)
  ;
  \draw[->]
  (dl) to node[below]{1}(ml)
  ;
    \node at (ml.west){$k$};
    \node at (ul.north){1};
    \node at (um.north){2};
    \node at (dm.south){$h-1$};
    \node at (dl.south){$h$};
    \matrix[matrix of math nodes,column sep={40pt,between origins},row
    sep={25pt,between origins}] at (8,0)
  {
    & |[name = ul]|\circ& |[name = um]|\circ & |[name = ur]|\circ&\\
    |[name = ml]|\bullet& &&& |[name = mr]|\vdots \\
    & |[name = dl]|\circ& |[name = dm]|\circ & |[name = dr]|\circ&\\
  };
\draw[->]
  (ul) to node[above] {1} (ml)
  ;
  \draw[->]
  (ul) to node[above]{1}(um)
  ;
  \draw[->]
  (um) to node[above]{1}(ur)
  ;
  \draw[->]
  (ur) [bend left] to (mr)
  ;
  \draw[->]
  (mr) [bend left] to (dr)
  ;
  \draw[->]
  (dr) to node[below]{1}(dm)
  ;
  \draw[->]
  (dm) to node[below]{1}(dl)
  ;
  \draw[->]
  (ml) to node[below]{1}(dl)
  ;
  \draw[->]
  (dl) to node[right]{1}(ul)
  ;
    \node at (ml.west){$k$};
    \node at (ul.north){1};
    \node at (um.north){2};
    \node at (dm.south){$h-1$};
    \node at (dl.south){$h$};
    
  \node [yshift = -.9 cm] at (um){$C'$};
    \end{tikzpicture}
   
   \item[(j)]
   \begin{tikzpicture}[baseline=-0.5ex]
    \matrix[matrix of math nodes,column sep={40pt,between origins},row
    sep={25pt,between origins}] at (8,0)
  {
    & |[name = ul]|\circ& |[name = um]|\circ & |[name = ur]|\circ&\\
    |[name = ml]|\bullet& &&& |[name = mr]|\vdots \\
    & |[name = dl]|\circ& |[name = dm]|\circ & |[name = dr]|\circ&\\
  };
\draw[->]
  (ml) to node[above] {1} (ul)
  ;
  \draw[->]
  (ul) to node[above]{1}(um)
  ;
  \draw[->]
  (um) to node[above]{1}(ur)
  ;
  \draw[->]
  (ur) [bend left] to (mr)
  ;
  \draw[->]
  (mr) [bend left] to (dr)
  ;
  \draw[->]
  (dr) to node[below]{1}(dm)
  ;
  \draw[->]
  (dm) to node[below]{1}(dl)
  ;
  \draw[->]
  (dl) to node[below]{1}(ml)
  ;
    \node at (ml.west){$k$};
    \node at (ul.north){1};
    \node at (um.north){2};
    \node at (dm.south){$h-1$};
    \node at (dl.south){$h$};
  \node [yshift = -.9 cm] at (um){$C'$};
    \matrix[matrix of math nodes,column sep={40pt,between origins},row
    sep={25pt,between origins}] at (0,0)
  {
    & |[name = ul]|\circ& |[name = um]|\circ & |[name = ur]|\circ&\\
    |[name = ml]|\bullet& &&& |[name = mr]|\vdots \\
    & |[name = dl]|\circ& |[name = dm]|\circ & |[name = dr]|\circ&\\
  };
\draw[->]
  (ul) to node[above] {1} (ml)
  ;
  \draw[->]
  (ul) to node[above]{1}(um)
  ;
  \draw[->]
  (um) to node[above]{1}(ur)
  ;
  \draw[->]
  (ur) [bend left] to (mr)
  ;
  \draw[->]
  (mr) [bend left] to (dr)
  ;
  \draw[->]
  (dr) to node[below]{1}(dm)
  ;
  \draw[->]
  (dm) to node[below]{1}(dl)
  ;
  \draw[->]
  (ml) to node[below]{1}(dl)
  ;
  \draw[->]
  (dl) to node[right]{1}(ul)
  ;
    \node at (ml.west){$k$};
    \node at (ul.north){1};
    \node at (um.north){2};
    \node at (dm.south){$h-1$};
    \node at (dl.south){$h$};
    \end{tikzpicture}
    
    \item[(k)] $C$ is an oriented cycle in $\Gamma$ not connected to $k$ and $C'$ is the corresponding cycle in $\Gamma'.$
    
      \item[(l)]
   \begin{tikzpicture}[baseline=-0.5ex]
    \matrix[matrix of math nodes,column sep={40pt,between origins},row
    sep={25pt,between origins}] at (8,0)
  {
    & & |[name = um]|\circ & |[name = ur]|\circ&\\
    |[name = ml]|\bullet& |[name = mm]|\circ &&& |[name = mr]|\vdots \\
    & & |[name = dm]|\circ & |[name = dr]|\circ&\\
  };
\draw[->]
  (mm) to node[above] {1} (ml)
  ;
  \draw[->]
  (mm) to node[above]{1}(um)
  ;
  \draw[->]
  (um) to node[above]{1}(ur)
  ;
  \draw[->]
  (ur) [bend left] to (mr)
  ;
  \draw[->]
  (mr) [bend left] to (dr)
  ;
  \draw[->]
  (dr) to node[below]{1}(dm)
  ;
  \draw[->]
  (dm) to node[below]{1}(mm)
  ;
    \node at (ml.west){$k$};
    \node at (mm.north){h};
    \node at (um.north){1};
    \node at (ur.north){2};
    \node at (dr.south){$h-2$};
    \node at (dm.south){$h-1$};
  \node [yshift = -.9 cm] at (ur){$C'$};
   \matrix[matrix of math nodes,column sep={40pt,between origins},row
    sep={25pt,between origins}] at (0,0)
  {
    & & |[name = um]|\circ & |[name = ur]|\circ&\\
    |[name = ml]|\bullet& |[name = mm]|\circ &&& |[name = mr]|\vdots \\
    & & |[name = dm]|\circ & |[name = dr]|\circ&\\
  };
\draw[->]
  (ml) to node[above] {1} (mm)
  ;
  \draw[->]
  (mm) to node[above]{1}(um)
  ;
  \draw[->]
  (um) to node[above]{1}(ur)
  ;
  \draw[->]
  (ur) [bend left] to (mr)
  ;
  \draw[->]
  (mr) [bend left] to (dr)
  ;
  \draw[->]
  (dr) to node[below]{1}(dm)
  ;
  \draw[->]
  (dm) to node[below]{1}(mm)
  ;
    \node at (ml.west){$k$};
    \node at (mm.north){h};
    \node at (um.north){1};
    \node at (ur.north){2};
    \node at (dr.south){$h-2$};
    \node at (dm.south){$h-1$};
  \node [yshift = -.9 cm] at (ur){$C'$};
   \end{tikzpicture}
\end{enumerate}

\caption{Induced subdiagrams of $\Gamma$ and corresponding chordless cycles in $\Gamma' = \mu_k(\Gamma),$ as found in \cite[Figure 6]{BM13}} \label{fig:second_cycle_types}
\end{figure}

\begin{lem}\label{lem:chordless_cycles}
\cite[Lemma 2.5]{BM13} 
Let $\Gamma$ be a diagram of finite type with $\Gamma' = \mu_k(\Gamma),$ the mutation of $\Gamma$ at vertex $k.$ In Figure \ref{fig:first_cycle_types} and Figure \ref{fig:second_cycle_types} we list induced subdiagrams in $\Gamma$ on the left and the resulting induced subdiagrams in $\Gamma'$ with chordless cycles $C'$ on the right, after mutation at $k.$ We draw the diagrams so that $C'$ always has a clockwise cycle. Furthermore, in case $(i),$ we assume $C'$ has at least three vertices, while in case $(j),$ we assume $C'$ has at least four vertices. 

Every chordless cycle in $\Gamma'$ is of one of the types listed in Figure \ref{fig:first_cycle_types} or Figure \ref{fig:second_cycle_types}.
\end{lem}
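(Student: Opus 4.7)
The plan is to enumerate chordless cycles $C'$ in $\Gamma'$ by exploiting that $\mu_k$ is involutive and only alters edges locally around $k$, then for each such $C'$ reconstruct the induced subdiagram of $\Gamma$ on $V(C') \cup \{k\}$.

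First I invoke Proposition \ref{cycle_types} to restrict the global shape of $C'$: it is either a cyclically oriented cycle all of whose edges have weight $1$, a $(1,2,1,2)$-weighted square, or a $(2,2,1)$-weighted triangle. I then split into cases by the position of $k$ relative to $C'$. In the case $k \notin V(C')$ with $k$ adjacent to no vertex of $V(C')$, the mutation formula never touches any edge of the induced subdiagram on $V(C')$, so this subdiagram is identical in $\Gamma$ and $\Gamma'$, recovering case (k).

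In the case $k \in V(C')$, I let $i, j$ be the two neighbors of $k$ along $C'$. Because $C'$ is cyclically oriented (Proposition \ref{cycle_types}), one of these neighbors has an incoming edge from $k$ in $\Gamma'$ and the other an outgoing edge to $k$; reversing these gives the edges at $k$ in $\Gamma$. I then apply Corollary \ref{local_three_picture} to the induced subdiagram of $\Gamma$ on $\{i, j, k\}$, reading the figures from right to left when needed. Each local configuration yields one of cases (a)--(d), and in each I check that the resulting weights are consistent with one of the admissible cycle shapes in Proposition \ref{cycle_types}.

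The remaining case is $k \notin V(C')$ but $k$ adjacent to at least one vertex of $V(C')$. Any edge of $C'$ between two $k$-neighbors is a candidate for having been created, destroyed, or reweighted by mutation, so I analyze each consecutive triple $\{a, b, k\}$ with $a, b$ neighbors along $C'$ via Corollary \ref{local_three_picture}. The short-cycle subcases ($C'$ a triangle or a square) give rise to cases (e)--(h) and (l), while a longer all-weight-$1$ cycle $C'$ attached to $k$ through one or two of its vertices yields cases (i)--(j). In each subcase I use 2-finiteness (edge weights at most $3$) from \cite[7.5]{FZ02} to rule out configurations that would otherwise appear, and I verify that no additional chords arise in $\Gamma'$ among vertices of $V(C')$.

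The main obstacle is the bookkeeping in this last case: I must simultaneously enumerate admissible source configurations in $\Gamma$, check that mutation produces exactly $C'$ (not some extension thereof), and rule out accidental chords in $\Gamma'$ that would contradict the chordlessness of $C'$. The interplay between Proposition \ref{cycle_types} applied to $\Gamma$ and to $\Gamma'$, together with the local restrictions from Corollary \ref{local_three_picture}, is what ultimately forces the short list in Figures \ref{fig:first_cycle_types} and \ref{fig:second_cycle_types}.
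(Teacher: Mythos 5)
First, a point of comparison: the paper itself contains no proof of this statement --- it is quoted as \cite[Lemma 2.5]{BM13}, and Section \ref{sec:finite-type_diagrams} is explicitly a recap of that paper --- so your sketch can only be measured against the cited Barot--Marsh argument, whose broad strategy (constrain $C'$ by Proposition \ref{cycle_types}, split on how $k$ meets $C'$, use involutivity of $\mu_k$ together with the local pictures of Corollary \ref{local_three_picture}) your plan does share. However, your assignment of the figure cases to your branches is wrong in a way that matters: cases (e) and (f) are four-cycles $C'$ that \emph{contain} $k$, and case (j) is a cycle of length at least four containing $k$; it is case (i), not (j), in which $k$ lies outside $C'$ and is attached to two consecutive vertices. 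Consequently your claim that the branch $k\in V(C')$ ``yields one of cases (a)--(d)'' is false as stated: when the cycle through $k$ has length at least four, the triple $\{i,j,k\}$ alone does not determine the configuration. One must additionally observe that adjacency to $k$ is preserved by mutation and that edges of $C'$ whose endpoints are not both neighbours of $k$ are untouched (so the rest of $C'$ persists in $\Gamma$), and then decide whether $i$ and $j$ are joined in $\Gamma$ --- the mutation may have destroyed that edge --- which is exactly what separates (e)/(f) from (j). As written, that branch of your argument simply omits these outcomes, and your third paragraph misfiles them.

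In the remaining branch ($k\notin V(C')$ with $k$ adjacent to $C'$), the decisive step is the one you defer to ``bookkeeping'': you must prove that $k$ can be adjacent to at most two vertices of $C'$, that when there are exactly two they are consecutive on $C'$, and that the attaching weights and the weights of $C'$ can only combine as in cases (g), (h), (i), (l). This is where the finite-type hypothesis genuinely does work beyond the bound of $3$ on edge weights --- one needs, for instance, that induced cycles are cyclically oriented and that the excluded attachment patterns would force induced subdiagrams (or diagrams obtained from them by further mutation) contradicting Proposition \ref{cycle_types} --- and no argument for any of this is offered. Similarly, ``verify that no additional chords arise'' and ``check that mutation produces exactly $C'$'' are named as tasks rather than carried out. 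So the skeleton of your plan is the right one, but the hardest content of the lemma is missing, and the announced correspondence between your case split and the list in Figures \ref{fig:first_cycle_types} and \ref{fig:second_cycle_types} must be repaired before the enumeration could be considered complete.
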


\section{The Artin Group of a Diagram}
\label{sec:defn_artingroup}

In order to prove our main result, Theorem ~\ref{thm:main}, we must first define the Artin group associated to a finite type diagram. This definition will be similar to that made in \cite{BM13} at the beginning of Section 3, except that we shall not require relation $(R1),$ i.e., $s_i^2 = e.$ Since Artin Groups are very similar to Coxeter groups, with the caveat that the generators are not involutions, we will be able to use these modified relations to great effect.

\subsection{Artin Groups}

\begin{note}
Let
\begin{align*}
\langle x_i,x_j \rangle ^k = \begin{cases}
(x_ix_j)^{\frac{k}{2}}, &\text{ if }k \equiv 0 \pmod 2\\
(x_ix_j)^{\frac{k-1}{2}}x_i &\text{ if } k \equiv 1 \pmod 2
\end{cases}
\end{align*}
That is, $\langle x_i,x_j \rangle$ is just an alternating sequence of $x_i$ and $x_j$ of length $k$.  We also write $\langle x_i,x_j\rangle^{-k}$ to denote $\left(\langle x_i,x_j\rangle^k\right)^{-1}$
\end{note}

\begin{defn}
\cite[Beginning of section 1.2]{C06}
Let $Sym_n(R)$ denote the set of $n \times n$ symmetric matrices with entries in $R$. For $M \in Sym_n(\mathbb Z \cup \infty)$ a symmetric matrix whose entries can take values in the integers or infinity, we define the associated {\it Artin group} in terms of generators and relations by
\begin{align*}
A = \langle x_1,\ldots, x_n| \langle x_i,x_j \rangle^{M_{i,j}} = \langle x_j,x_i \rangle^{M_{i,j}} \text{ for all } i,j \text{ with } M_{i,j}<\infty \rangle,
\end{align*}
\end{defn}

\begin{rem}
Each Artin group has an associated Coxeter group defined by adding in the additional relations $s_i^2 = e$ for all $i.$ An Artin group is said to be of {\it finite type} if its associated Coxeter group is of finite type. To each Artin group of finite type we can assign to it the same Dynkin diagram which is assigned to the Coxeter group associated to the Artin group.
\end{rem}

One of the most well-known Artin groups is the braid group on n strands, which was shown to have an Artin group structure in \cite{FN61}. The associated Coxeter group is the symmetric group $S_{n}$.

\subsection{The Group associated to Diagram}

We are now ready to define the Artin group we associate to a diagram of finite type.

\begin{defn}
\label{defn:cycle_tuple}
Let $(i_0,\ldots, i_{d-1})$ be an ordered tuple such that the subgraph of $\Gamma$ on the vertices $i_0,\ldots, i_{d-1}$ is a chordless cycle, with edges of nonzero weight from $i_k$ to $i_{k+1}$, where subscripts are taken $\pmod d.$ Call such an ordered tuple a {\it chordless cycle tuple.} Then, denote $$p(i_a,i_{a+1}) = s_{i_{a+1}}^{-1}s_{i_{a+2}}^{-1}\dots s_{i_{a-2}}^{-1}s_{i_{a-1}}s_{i_{a-2}}s_{i_{a-3}}\dots s_{i_{a+1}}.$$ Additionally, let $$t(i_a,i_{a+1}) = [s_{i_a},p(i_a,i_{a+1})]$$ where $[a,b] = aba^{-1}b^{-1}$ is the commutator.
\end{defn}

\begin{defn} \label{grp def}
The {\it associated Artin group} to a diagram $\Gamma$ of finite type, denoted $A_\Gamma,$ is generated by $s_i,$ where there is one $s_i$ for each vertex $i$ in $\Gamma.$ These generators are subject to the following relations
\begin{itemize}
\item[(T2)] With $m_{ij}$ as defined in Definition ~\ref{defn:mij}, for all $i \neq j,$ we add the relations
$\langle s_i,s_j \rangle^{m_{ij}}= \langle s_j,s_i \rangle^{m_{ij}}.$

\item[(T3)] Let $(i_0,i_1,\ldots,i_{d-1})$ be a chordless cycle tuple, as defined in Definition ~\ref{defn:cycle_tuple}. If additionally one of the following two conditions hold,
\begin{enumerate}
\item All edges in the chordless cycle are of weight 1 or 2 and the edge $i_{d-1}\rightarrow i_0$ has weight 2,
\item All edges in the chordless cycle have weight $1,$
\end{enumerate}
then, we include the relation
$t(i_0,i_1)=e.$ That is, $s_{i_0}$ and $p(i_0,i_1)$ commute.
\end{itemize}
\end{defn}

\begin{rem}
In the above definition, the chordless cycle tuple is ordered, and so we may have other relations corresponding to chordless cycle tuples which are cyclic reorderings the chordless cycle tuple $(i_0,\ldots, i_{d-1}).$ However, we shall see in Section ~\ref{sec:one_relation} that many of these relations are redundant.
\end{rem}

\begin{rem}
We purposely include relations $(T2),(T3)$ but not $(T1)$ in order to make our relation labeling analogous to that of \cite{BM13} at the beginning of Section 3. Note that if we add the additional relation $(R1)$ as defined at the beginning of Section 3 of \cite{BM13} (namely, if we add $s_i^2 = e$ for all vertices $i$ in $\Gamma$), then we will precisely obtain the group $W_\Gamma$ as defined at the beginning of Section 3 in \cite{BM13}.
\end{rem}

\begin{rem}
Throughout the remainder of the paper, we shall frequently discuss relations on one diagram of finite type, $\Gamma,$ and another diagram of finite type $\Gamma^\prime.$ In order to distinguish the relations in these two groups, we shall refer to the relations on $\Gamma$ as $(T2),(T3)$ and the relations on $\Gamma^\prime$ as $(T2^\prime),(T3^\prime).$
\end{rem}

\begin{ex}
The relations $(T2),(T3)$ in that $\Gamma$ is a square with all edges of weight 1 are as follows:
\[\Gamma =\begin{tikzpicture}[baseline=-0.5ex]
\matrix[matrix of math nodes,column sep={40pt,between origins},row
sep={40pt,between origins}] at (6,0)
{
& |[name = ul]|\circ& |[name = ur]|\circ\\
&|[name=dl]| \circ &|[name=dr]| \circ \\
};
\draw
(ul) -- node[above] {1} (ur)
;
\draw
(dl) -- node[left]{1}(ul)
;
\draw
(dr) -- node[below]{1}(dl)
;
\draw
(ur) -- node[right]{1} (dr)
;
\node [yshift = .4 cm,xshift = -.4 cm] at (ul){1};
\node [yshift = .4 cm,xshift = .4 cm] at (ur){2};
\node [yshift = -.4 cm,xshift = .4 cm] at (dr){3};
\node [yshift = -.4 cm,xshift = -.4 cm] at (dl){4};
\end{tikzpicture}\]
   \begin{enumerate}
\item[(T2)]
\begin{itemize}
\item $\langle s_1,s_2 \rangle^3 = \langle s_2,s_1 \rangle^3,$ i.e., $s_1s_2s_1 = s_2s_1s_2$
\item $\langle s_2,s_3 \rangle^3 = \langle s_3,s_2 \rangle^3,$ i.e., $s_2s_3s_2 = s_3s_2s_3$
\item $\langle s_3,s_4 \rangle^3 = \langle s_4,s_3 \rangle^3,$ i.e., $s_3s_4s_3 = s_4s_3s_4$
\item $\langle s_4,s_1 \rangle^3 = \langle s_1,s_4 \rangle^3,$ i.e., $s_4s_1s_4 = s_1s_4s_1$
\end{itemize}
\item[(T3)]
\begin{itemize}
\item $s_{1}s_{2}^{-1}s_{3}^{-1}s_4s_{3}s_2s_1^{-1}s_2^{-1}s_3^{-1}s_4^{-1}s_3s_2 = e$
\item $s_{2}s_{3}^{-1}s_{4}^{-1}s_1s_{4}s_3s_2^{-1}s_3^{-1}s_4^{-1}s_1^{-1}s_4s_3 = e$
\item $s_{3}s_{4}^{-1}s_{1}^{-1}s_2s_{1}s_4s_3^{-1}s_4^{-1}s_1^{-1}s_2^{-1}s_1s_4 = e$
\item $s_{4}s_{1}^{-1}s_{2}^{-1}s_3s_{2}s_1s_4^{-1}s_1^{-1}s_2^{-1}s_3^{-1}s_2s_1 = e$
\end{itemize}
\end{enumerate}
\end{ex}

\begin{ex}
The relations $(T2),(T3)$ in that $\Gamma$ is a triangle with two edges of weight 2 and one of weight 1 are as follows:
\[\Gamma = \begin{tikzpicture}[baseline=-0.5ex]
\matrix[matrix of math nodes,column sep={25pt,between origins},row
    sep={40pt,between origins}] at (0,0)
  {
    && |[name = u]|\circ& \\
    &|[name=dl]| \circ & &|[name=dr]| \circ \\
  };
\draw[->]
   (u) to node[right] {2}(dr)
  ;
  \draw[->]
  (dl) to node[left]{2}(u)
  ;
  \draw[->]
  (dr) to node[below]{1}(dl)
  ;
  \node [yshift = .4 cm] at (u){1};
    \node [yshift = -.4 cm,xshift = .4 cm] at (dr){2};
    \node [yshift = -.4 cm,xshift = -.4 cm] at (dl){3};
   \end{tikzpicture}\]
   \begin{enumerate}
\item[(T2)]
\begin{itemize}
\item $\langle s_1,s_2 \rangle^4 = \langle s_2,s_1 \rangle^4,$ i.e., $s_1s_2s_1s_2 = s_2s_1s_2s_1$
\item $\langle s_2,s_3 \rangle^3 = \langle s_3,s_2 \rangle^3,$ i.e., $s_2s_3s_2 = s_3s_2s_3$
\item $\langle s_3,s_4 \rangle^4 = \langle s_4,s_3 \rangle^4,$ i.e., $s_3s_4s_3s_4 = s_4s_3s_4s_3$              
\end{itemize}
\item[(T3)]
\begin{itemize}
\item $s_1s_2^{-1}s_3^{-1}s_2s_1^{-1}s_2^{-1}s_3s_2 = e$

\item $s_2s_3^{-1}s_1^{-1}s_3s_2^{-1}s_3^{-1}s_1s_3 =e$

\end{itemize}
\end{enumerate}
\end{ex}

\begin{rem}
Note that if $\Gamma$ is the graph associated to a Dynkin diagram, then $W_\Gamma$ as we have defined it is precisely the Artin group corresponding to that Dynkin diagram. This occurs because, in this case, we have no cycles in $\Gamma,$ and so we only have relations of the form $(T2),$ which define the Artin group.
\end{rem}

\section{Symmetry among the (R3) Relations}
\label{sec:one_relation}

Given the relations (T2), many of the relations in (T3) become redundant. For example,

\begin{lem} \label{Sym-Lem}
Let $\Gamma$ be a diagram of finite type which contains a chordless cycle C:
\begin{equation*}
\begin{tikzcd}
i_0 \arrow{r} & i_1 \arrow{r} & \cdots \arrow{r} & i_{d-1} \arrow{r} & i_0
\end{tikzcd}
\end{equation*}
so that all edges have weight 1. Then if W is a group generated by $s_{1}, \dots, s_{n}$ satisfying the relations (T2) and $t(i_{a}, i_{a+1}) = e$ for some a $\in \{1, \dots, d\}$, all of the relations in (T3) hold for C.
\end{lem}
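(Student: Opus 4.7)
The plan is to establish, modulo only the (T2) relations, a one-step equivalence $t(i_a, i_{a+1}) = e \iff t(i_{a-1}, i_a) = e$ for each $a$; iterating this equivalence $d-1$ times around the cycle then derives every (T3) relation for $C$ from the single assumed one.

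Let $A_a := s_{i_{a+1}}^{-1} s_{i_{a+2}}^{-1} \cdots s_{i_{a-2}}^{-1}$, so that by inspection of the definition, $p(i_a, i_{a+1}) = A_a\, s_{i_{a-1}}\, A_a^{-1}$. Conjugating the commutator $t(i_a, i_{a+1}) = [s_{i_a},\, A_a s_{i_{a-1}} A_a^{-1}]$ by $A_a^{-1}$ yields $[A_a^{-1} s_{i_a} A_a,\, s_{i_{a-1}}]$, so $t(i_a, i_{a+1}) = e$ is equivalent to $[A_a^{-1} s_{i_a} A_a,\, s_{i_{a-1}}] = e$. Everything thus reduces to proving the key word identity
$$A_a^{-1} s_{i_a} A_a = p(i_{a-1}, i_a)$$
in the quotient of the free group by the (T2) relations alone; granted this identity, the displayed commutator is exactly $t(i_{a-1}, i_a) = e$.

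To prove the key identity, I would first rewrite the innermost position of $A_a^{-1} s_{i_a} A_a = s_{i_{a-2}} \cdots s_{i_{a+1}}\, s_{i_a}\, s_{i_{a+1}}^{-1} \cdots s_{i_{a-2}}^{-1}$ using the braid relation $s_{i_{a+1}} s_{i_a} s_{i_{a+1}}^{-1} = s_{i_a}^{-1} s_{i_{a+1}} s_{i_a}$, an equivalent form of the (T2) relation $\langle s_{i_a}, s_{i_{a+1}}\rangle^{3} = \langle s_{i_{a+1}}, s_{i_a}\rangle^{3}$. Because $C$ is chordless, $s_{i_a}$ commutes in $\Gamma$ with every $s_{i_j}$ for $j \ne a \pm 1$, so the $s_{i_a}^{\pm 1}$ factors produced by the braid slide freely to the outside past all remaining generators.

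The main technical obstacle is the leftover middle factor, a path-conjugation expression along the subpath $i_{a+1}, \ldots, i_{a-2}$ of $C$. I would establish the path identity
\begin{align*}
s_{j_n} s_{j_{n-1}} \cdots s_{j_2}\, s_{j_1}\, s_{j_2}^{-1} \cdots s_{j_{n-1}}^{-1} s_{j_n}^{-1} = s_{j_1}^{-1} s_{j_2}^{-1} \cdots s_{j_{n-1}}^{-1}\, s_{j_n}\, s_{j_{n-1}} \cdots s_{j_2}\, s_{j_1},
\end{align*}
valid whenever $j_1, \ldots, j_n$ is a path with consecutive generators braiding and non-consecutive ones commuting, by induction on $n$. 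The base case $n = 2$ is the braid identity $s_{j_2} s_{j_1} s_{j_2}^{-1} = s_{j_1}^{-1} s_{j_2} s_{j_1}$; the inductive step slides a new factor $s_{j_{n+1}}$ through its commuting neighbors $s_{j_1}^{\pm 1}, \ldots, s_{j_{n-1}}^{\pm 1}$ before invoking a single braid relation with $s_{j_n}$. Applying this identity with $n = d - 2$ recognizes the leftover middle factor above as the middle of $p(i_{a-1}, i_a)$, completing the proof of the key identity and hence of the lemma.
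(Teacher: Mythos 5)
Your argument is correct. The conjugation step is sound ($A_a^{-1}\,t(i_a,i_{a+1})\,A_a=[A_a^{-1}s_{i_a}A_a,\;s_{i_{a-1}}]$, and a commutator vanishes iff its reverse does), the key identity $A_a^{-1}s_{i_a}A_a=p(i_{a-1},i_a)$ does follow from (T2) alone exactly as you describe (one braid move at the core, sliding $s_{i_a}^{\pm1}$ outward past the non-neighbours of $i_a$, then your path identity with $n=d-2$ applied to the subpath $i_{a+1},\dots,i_{a-2}$), and the induction proving the path identity is the standard one, with base case the relation of Lemma \ref{Extra-Rel}(a). Your route differs from the paper's in organization rather than in substance: the paper proves the single implication $t(i_0,i_1)=e\Rightarrow t(i_{d-1},i_0)=e$ by one long word computation in which the hypothesis $t(i_0,i_1)=e$ is invoked midway, and then appeals to induction around the cycle; you instead isolate an unconditional identity, valid modulo (T2) only, which shows that all $d$ cycle relations $t(i_a,i_{a+1})=e$ are conjugate to one another, so their equivalence is immediate and manifestly bidirectional. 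What your decomposition buys is modularity and reusability (the key identity and the path identity are statements about (T2) alone, proved by a clean induction, rather than a calculation entangled with the hypothesis); what the paper's direct computation buys is that it stays entirely at the level of explicit word manipulations of the sort reused throughout its later lemmas, at the cost of leaving the ``induction'' for the remaining relations implicit. Two cosmetic points: in the final sentence of your second paragraph the granted identity gives a commutator whose vanishing is \emph{equivalent} to $t(i_{a-1},i_a)=e$ (it is $[p(i_{a-1},i_a),s_{i_{a-1}}]$, the reversed commutator), which is what you need but is worth saying precisely; and you should note that the degenerate cases $d=3,4$ (where the subpath has one or two vertices) are covered trivially by your base cases.
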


In the proof of this Lemma and throughout the rest of the paper, we will frequently employ the following relations, which follow from the (T2) relations.

\begin{lem} \label{Extra-Rel}
Let $\Gamma$ be a diagram of finite type, and let $A_{\Gamma}$ be generated by $s_{1}, \dots, s_{n}$. Then we have that 
\begin{enumerate}[(a)]
\item $s_{i}s_{j}s_{i}^{-1} = s_{j}^{-1}s_{i}s_{j}$ if there is an arrow of weight 1 from i to j in $\Gamma$ \\
\item $s_{i}s_{j}s_{i}^{-1}s_{j}^{-1} = s_{j}^{-1}s_{i}^{-1}s_{j}s_{i}$ if there is an arrow of weight 2 from i to j in $\Gamma$
\end{enumerate}
\end{lem}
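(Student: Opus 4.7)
The plan is to derive both identities as purely algebraic consequences of the corresponding (T2) relations; no (T3) cycle relations are needed, and in particular the proof is local to the single edge $i\text{---}j$.

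For part (a), an edge of weight $1$ from $i$ to $j$ means $m_{ij}=3$, so (T2) reduces to the braid relation $s_i s_j s_i = s_j s_i s_j$. Conjugating or, more concretely, left-multiplying by $s_j^{-1}$ and right-multiplying by $s_i^{-1}$ yields
\[
s_j^{-1}(s_i s_j s_i)s_i^{-1} \;=\; s_j^{-1}(s_j s_i s_j)s_i^{-1},
\]
which after the obvious cancellations reads $s_j^{-1} s_i s_j = s_i s_j s_i^{-1}$, i.e.\ exactly (a). So part (a) is essentially a one-line rearrangement.

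For part (b), an edge of weight $2$ gives $m_{ij}=4$, so (T2) becomes $(s_i s_j)^2 = (s_j s_i)^2$, i.e.\ $s_i s_j s_i s_j = s_j s_i s_j s_i$. I would prove the desired identity $s_i s_j s_i^{-1} s_j^{-1} = s_j^{-1} s_i^{-1} s_j s_i$ by showing both sides agree after right-multiplication by $s_j s_i$, which is injective. On one hand, the left-hand side collapses trivially:
\[
(s_i s_j s_i^{-1} s_j^{-1})\cdot s_j s_i \;=\; s_i s_j.
\]
On the other hand, using (T2) in the middle,
\[
(s_j^{-1} s_i^{-1} s_j s_i)\cdot s_j s_i \;=\; s_j^{-1} s_i^{-1}(s_j s_i)^2 \;=\; s_j^{-1} s_i^{-1}(s_i s_j)^2 \;=\; s_j^{-1} s_j s_i s_j \;=\; s_i s_j.
\]
Cancelling $s_j s_i$ then gives (b).

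The only step that needed any thought is choosing the right multiplier in (b); once $s_j s_i$ is used, the calculation is forced by (T2) and cancellation. I do not anticipate any real obstacle: both parts are bookkeeping identities that repackage the two braid-type (T2) relations into commutator form, to be invoked freely later in Lemma~\ref{Sym-Lem} and in the main mutation-invariance argument of Section~\ref{sec:main_result}.
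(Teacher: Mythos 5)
Your proof is correct and takes essentially the same approach as the paper: both identities are extracted from the (T2) braid relations $s_is_js_i=s_js_is_j$ and $(s_is_j)^2=(s_js_i)^2$ by multiplying by suitable inverses and cancelling. If anything, your verification of (b) is slightly cleaner, since the paper's stated left/right multiplications literally yield the identity with $i$ and $j$ interchanged (equivalent only via the symmetry of the weight-2 relation), whereas your right-multiplication by $s_js_i$ targets the claimed identity directly.
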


\begin{proof}[Proof of \ref{Extra-Rel}]
The relation in (a) follows immediately from the (T2) relation $s_{j}s_{i}s_{j} = s_{i}s_{j}s_{i}$ by left multiplying both sides by $s_{j}^{-1}$ and right multiplying both sides by $s_{i}^{-1}$. Similarly, the relation in (b) arises from the (T2) relation $s_{j}s_{i}s_{j}s_{i} = s_{i}s_{j}s_{i}s_{j}$ by left multiplying both sides by $s_{i}^{-1}s_{j}^{-1}$ and right multiplying both sides by $s_{j}^{-1}s_{i}^{-1}$
\end{proof}   
\begin{proof}[Proof of \ref{Sym-Lem}]
It suffices to prove that the relation t(0, 1) = e implies that t(d-1, 0) = e, as the other relations will follow by induction. So suppose $A_{\Gamma}$ satisfies the relation t(0, 1) = e. Then we have
\begin{align*}
s_{d-1}p(d-1, 0) & = s_{d-1}s_{0}^{-1}s_{1}^{-1}\dots s_{d-3}^{-1}s_{d-2}s_{d-3}\dots s_1s_0 \\
&= s_{0}^{-1}s_{0}s_{d-1}s_{0}^{-1}s_{1}^{-1}\dots s_{d-3}^{-1}s_{d-2}s_{d-3}\dots s_{1}s_{d-1}^{-1}s_{d-1}s_{0} \\
&= s_{0}^{-1}s_{d-1}^{-1}s_{0}s_{d-1}s_{1}^{-1}\dots s_{d-3}^{-1}s_{d-2}s_{d-3}\dots s_{1}s_{d-1}^{-1}s_{d-1}s_{0} &\text{by (T2)} \\
&= s_{0}^{-1}s_{d-1}^{-1}s_{0}s_{1}^{-1}\dots s_{d-3}^{-1}s_{d-1}s_{d-2}s_{d-1}^{-1}s_{d-3}\dots s_{1}s_{d-1}s_{0} \\
&= s_{0}^{-1}s_{d-1}^{-1}(s_{0}s_{1}^{-1}\dots s_{d-3}^{-1}s_{d-2}^{-1}s_{d-1}s_{d-2}s_{d-3}\dots s_{1})s_{d-1}s_{0} &\text{by (T2)}\\
&= s_{0}^{-1}s_{d-1}^{-1}(s_{1}^{-1} \dots s_{d-3}^{-1}s_{d-2}^{-1}s_{d-1}s_{d-2}s_{d-3}\dots s_{0})s_{d-1}s_{0} &\text{by t(0, 1) = e}\\
&= s_{0}^{-1}s_{d-1}^{-1}(s_{1}^{-1} \dots s_{d-3}^{-1}s_{d-1}s_{d-2}s_{d-1}^{-1}s_{d-3}\dots s_{0})s_{d-1}s_{0} &\text{by (T2)}\\
&= s_{0}^{-1}(s_{d-1}^{-1}s_{d-1})s_{1}^{-1}\dots s_{d-3}^{-1}s_{d-2}s_{d-3}\dots s_{1}s_{d-1}^{-1}s_{0}s_{d-1}s_{0} \\
&= s_{0}^{-1}s_{1}^{-1}\dots s_{d-3}^{-1}s_{d-2}s_{d-3}\dots s_{1}s_{d-1}^{-1}s_{0}s_{d-1}s_{0} \\
&= s_{0}^{-1}s_{1}^{-1}\dots s_{d-3}^{-1}s_{d-2}s_{d-3}\dots s_{1}s_{0}s_{d-1}s_{0}^{-1}s_{0} &\text{by (T2)}\\
&= s_{0}^{-1}s_{1}^{-1}\dots s_{d-3}^{-1}s_{d-2}s_{d-3}\dots s_{1}s_{0}s_{d-1} = p(d-1, 0)s_{d-1}
\end{align*}
as required. Note that line 3 is equal to 4 and line 7 is equal to line 8 since the cycle is chordless, meaning that $s_{d-1}$ commutes with every element except $s_{0}$ and $s_{d-2}$.
\end{proof}
Furthermore, we obtain similar results for cycles containing edges of weight 2.
\begin{lem} \label{2tri}
Let $\Gamma$ be a diagram of finite type containing the following 3-cycle:

\begin{tikzpicture}[baseline=-0.5ex]
   \matrix[matrix of math nodes,column sep={25pt,between origins},row
    sep={40pt,between origins}] at (10,0)
  {
    && |[name = u]| {k} \circ& \\
    &|[name=dl]|{j} \circ & &|[name=dr]| \circ {i} \\
  };
\draw[->]
  (u) to node[right] {2} (dr)
  ;
  \draw[->]
  (dl) to node[left]{2}(u)
  ;
  \draw[->]
  (dr) to node[below]{1}(dl)
  ;
  
   \end{tikzpicture}
and let A be the group with generators $s_{1}, \dots, s_{n}$ defined by $\Gamma$. Then the relations t(i, j) = e and t(k, i) = e are equivalent.
\end{lem}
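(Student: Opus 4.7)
The plan is to show that each of the relations $t(i,j)=e$ and $t(k,i)=e$ is equivalent, using only the $(T2)$ relations, to the single commutation assertion that $s_k$ commutes with $s_i^{-1}s_js_i$. The bridge between the two reformulations is the braid relation coming from the weight-$1$ edge between $i$ and $j$.

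First I would unpack $p(i,j)$ and $p(k,i)$ directly from Definition~\ref{defn:cycle_tuple}. For the cyclic reorderings $(i,j,k)$ and $(k,i,j)$ of the $3$-cycle, the initial and final subproducts in the definition of $p$ have index range $a+1,\ldots,a-2$, which collapses modulo $d=3$ to a single letter. Reading this off gives
\[
p(i,j)=s_j^{-1}s_ks_j, \qquad p(k,i)=s_i^{-1}s_js_i.
\]
So $t(i,j)=e$ reads ``$s_i$ commutes with $s_j^{-1}s_ks_j$'' and $t(k,i)=e$ reads ``$s_k$ commutes with $s_i^{-1}s_js_i$.''

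Next I would conjugate the first commutation by $s_j$. Multiplying both sides of the identity $s_i\cdot s_j^{-1}s_ks_j=s_j^{-1}s_ks_j\cdot s_i$ on the left by $s_j$ and on the right by $s_j^{-1}$ turns it into $(s_js_is_j^{-1})s_k=s_k(s_js_is_j^{-1})$. Hence $t(i,j)=e$ is equivalent to the assertion that $s_k$ commutes with $s_js_is_j^{-1}$. The final step is to identify $s_js_is_j^{-1}$ with $p(k,i)$: because the edge between $i$ and $j$ has weight $1$, we have $m_{ij}=3$, so $(T2)$ supplies the braid identity $s_is_js_i=s_js_is_j$, which rearranges to $s_js_is_j^{-1}=s_i^{-1}s_js_i=p(k,i)$. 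Substituting, the commutation from the previous sentence is precisely $t(k,i)=e$. Since every manipulation is reversible, the two relations are equivalent.

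I do not anticipate a serious obstacle. The only delicate point is index bookkeeping in the definition of $p$ for a $3$-cycle, where the modular wraparound makes the initial and final subproducts collapse to a single letter; once $p(i,j)$ and $p(k,i)$ are spelled out correctly, the equivalence is a one-line consequence of the braid identity together with a single conjugation.
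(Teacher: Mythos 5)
Your proposal is correct and follows essentially the same route as the paper: both proofs reduce the claim to conjugating the commutator $t(i,j)$ by $s_j$ and then invoking the weight-$1$ braid relation $s_js_is_j^{-1}=s_i^{-1}s_js_i$ to identify the result with the commutation condition $t(k,i)=e$. Your unpacking $p(i,j)=s_j^{-1}s_ks_j$ and $p(k,i)=s_i^{-1}s_js_i$ matches the paper's, and phrasing both relations as commutation statements with a common conjugate is just a cleaner presentation of the paper's direct computation.
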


\begin{proof}
The lemma follows from the fact that
\begin{align*}
& s_{k}^{-1}s_{j}(s_{i}p(i, j)s_{i}^{-1}p(i, j)^{-1})s_{j}^{-1}s_{k} \\
&= s_{k}^{-1}s_{j}(s_{i}s_{j}^{-1}s_{k}s_{j}s_{i}^{-1}s_{j}^{-1}s_{k}^{-1}s_{j})s_{j}^{-1}s_{k} \\
&= s_{k}^{-1}s_{j}s_{i}s_{j}^{-1}s_{k}s_{j}s_{i}^{-1}s_{j}^{-1} \\
&= s_{k}^{-1}s_{i}^{-1}s_{j}s_{i}s_{k}s_{i}^{-1}s_{j}^{-1}s_{i} \\
&= s_{k}^{-1}p(k, i)s_{k}p(k, i)^{-1}
\end{align*}
\end{proof}

In the setting of the previous lemma, we also obtain the following relation, which will play an important role in later proofs.

\begin{lem}\label{lem:extra_square_relation}
Suppose $\Gamma$ contains a 3-cycle with edges of weight 2, labeled as in ~\ref{2tri}, and suppose that A$_{\Gamma}$ is generated by $s_{1}, \dots s_{n}$. Then we have that
$$s_{j}p(j, k)s_{j}p(j, k)^{-1}s_{j}^{-1}p(j, k)^{-1} := s_{j}s_{k}^{-1}s_{i}s_{k}s_{j}s_{k}^{-1}s_{i}^{-1}s_{k}s_{j}^{-1}s_{k}^{-1}s_{i}^{-1}s_{k} = e.$$
\end{lem}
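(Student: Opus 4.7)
The plan is to recognize that the claimed identity is equivalent to the braid-three relation
$s_j\,\alpha\,s_j = \alpha\,s_j\,\alpha$, where $\alpha := p(j,k) = s_k^{-1}s_is_k$, and to reduce this to the braid-three relation $s_js_is_j = s_is_js_i$ already guaranteed by (T2) on the weight-one edge between $i$ and $j$. The key idea is to realize $\alpha$ as a conjugate of $s_i$ by an element that commutes with $s_j$.

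The first step is to introduce the auxiliary element $\gamma := s_is_ks_i^{-1}$ and to check that $\gamma$ commutes with $s_j$. Relation $t(i,j)=e$ from (T3) asserts $[s_i,\,s_j^{-1}s_ks_j] = e$; conjugating by $s_j$ rewrites this as $[s_js_is_j^{-1},\,s_k]=e$. The braid-three identity $s_is_js_i = s_js_is_j$ of (T2) gives $s_js_is_j^{-1} = s_i^{-1}s_js_i$, so $[s_i^{-1}s_js_i,\,s_k]=e$, and a further conjugation by $s_i$ produces $[s_j,\gamma]=e$. This is very much in the spirit of the manipulation used in the proof of Lemma~\ref{2tri}.

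The second step is to express $\alpha = p(j,k)$ as a $\gamma$-conjugate of $s_i$. The braid-four relation (T2) between $s_i$ and $s_k$ gives the rewriting $s_k^{-1}s_is_k = s_is_ks_is_k^{-1}s_i^{-1}$, and the right-hand side factors as $(s_is_ks_i^{-1})\cdot s_i\cdot (s_is_k^{-1}s_i^{-1}) = \gamma\,s_i\,\gamma^{-1}$. With both ingredients in hand the claim is immediate: using $s_j\gamma = \gamma s_j$ to move the $\gamma$'s outward, the expression $s_j\alpha s_j = s_j(\gamma s_i\gamma^{-1})s_j$ collapses to $\gamma(s_js_is_j)\gamma^{-1}$, while $\alpha s_j\alpha = (\gamma s_i\gamma^{-1})s_j(\gamma s_i\gamma^{-1})$ collapses to $\gamma(s_is_js_i)\gamma^{-1}$, and these are equal by (T2). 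Rearranging $s_j\alpha s_j = \alpha s_j\alpha$ yields exactly the identity in the lemma. I expect the only nontrivial move to be spotting the right auxiliary conjugate $\gamma$; once $\gamma$ has been isolated the remainder is a short calculation in the braid relations.
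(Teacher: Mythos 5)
Your proof is correct, and it takes a genuinely different route from the paper. The paper verifies the identity by brute force: it conjugates an equivalent form of the relation by $s_ks_js_k$ and cancels everything down to $e$ in a twelve-line computation, using the weight-2 braid relation on the edge $j\to k$ (via Lemma~\ref{Extra-Rel}(b)), the weight-1 braid relation on $i \-- j$, and the (T3) commutation $[s_i,\,s_j^{-1}s_ks_j]=e$. You instead observe that the asserted identity is exactly the length-3 braid relation between $s_j$ and $\alpha=p(j,k)=s_k^{-1}s_is_k$, and reduce it structurally: the weight-2 braid relation on $i \-- k$ rewrites $\alpha=\gamma s_i\gamma^{-1}$ with $\gamma=s_is_ks_i^{-1}$, the (T3) relation $t(i,j)=e$ combined with the $i \-- j$ braid relation shows $[\,s_j,\gamma\,]=e$ (a legitimate use, since the tuple $(i,j,k)$ has its closing edge $k\to i$ of weight 2, so $t(i,j)=e$ is among the defining (T3) relations), and then conjugation by $\gamma$ collapses $s_j\alpha s_j=\alpha s_j\alpha$ to $s_js_is_j=s_is_js_i$. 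All three steps check out, including the final rearrangement of $s_j\alpha s_j=\alpha s_j\alpha$ into the stated word being trivial. Note that the two arguments even use different (T2) data: the paper never touches the $i \-- k$ braid relation but leans on the $j \-- k$ one, while you do the opposite. What your approach buys is conceptual clarity — it exhibits the lemma as the statement that $s_j$ and $p(j,k)$ satisfy a braid relation inherited by conjugation from the $i \-- j$ edge, in the same spirit as the conjugation arguments of Lemmas~\ref{2tri} and~\ref{2sqr} — at the cost of introducing the auxiliary element $\gamma$; the paper's computation is longer but entirely mechanical.
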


\begin{proof}
We show that
$$s_{j}^{-1}s_{k}^{-1}s_{i}s_{k}s_{j}s_{k}^{-1}s_{i}s_{k}s_{j}^{-1}s_{k}^{-1}s_{i}^{-1}s_{k} = e.$$
The result then follows by inverting the relation and conjugating by $s_{j}$. In the following computation, we will underline the terms being manipulated in each line for emphasis.

\begin{align*}
& s_{k}s_{j}s_{k}(s_{j}^{-1}s_{k}^{-1}s_{i}s_{k}s_{j}s_{k}^{-1}s_{i}s_{k}s_{j}^{-1}s_{k}^{-1}s_{i}^{-1}s_{k})s_{k}^{-1}s_{j}^{-1}s_{k}^{-1} \\
&= s_{k}\underline{s_{j}s_{k}s_{j}^{-1}s_{k}^{-1}}s_{i}s_{k}s_{j}s_{k}^{-1}s_{i}s_{k}s_{j}^{-1}s_{k}^{-1}s_{i}^{-1}\underline{s_{k}s_{k}^{-1}}s_{j}^{-1}s_{k}^{-1} \\
&= \underline{s_{k}s_{k}^{-1}}\underline{s_{j}^{-1}s_{k}s_{j}s_{i}}s_{k}s_{j}s_{k}^{-1}s_{i}s_{k}s_{j}^{-1}s_{k}^{-1}s_{i}^{-1}s_{j}^{-1}s_{k}^{-1} \\
&= s_{i}\underline{s_{j}^{-1}s_{k}s_{j}s_{k}}s_{j}s_{k}^{-1}s_{i}s_{k}s_{j}^{-1}s_{k}^{-1}s_{i}^{-1}s_{j}^{-1}s_{k}^{-1} \\
&= s_{i}s_{k}s_{j}\underline{s_{k}s_{j}^{-1}s_{j}s_{k}^{-1}}s_{i}s_{k}s_{j}^{-1}s_{k}^{-1}s_{i}^{-1}s_{j}^{-1}s_{k}^{-1} \\
&= s_{i}\underline{e}s_{k}s_{j}s_{i}s_{k}s_{j}^{-1}s_{k}^{-1}s_{i}^{-1}s_{j}^{-1}s_{k}^{-1} \\
&= s_{i}s_{j}\underline{s_{j}^{-1}s_{k}s_{j}s_{i}}s_{k}s_{j}^{-1}s_{k}^{-1}s_{i}^{-1}s_{j}^{-1}s_{k}^{-1} \\
&= \underline{s_{i}s_{j}s_{i}}s_{j}^{-1}s_{k}s_{j}s_{k}s_{j}^{-1}s_{k}^{-1}s_{i}^{-1}s_{j}^{-1}s_{k}^{-1} \\
&= s_{j}s_{i}\underline{s_{j}s_{j}^{-1}}s_{k}\underline{s_{j}s_{k}s_{j}^{-1}s_{k}^{-1}}s_{i}^{-1}s_{j}^{-1}s_{k}^{-1} \\
&= s_{j}s_{i}\underline{s_{k}s_{k}^{-1}}\underline{s_{j}^{-1}s_{k}s_{j}s_{i}^{-1}}s_{j}^{-1}s_{k}^{-1} \\
&= s_{j}\underline{s_{i}s_{i}^{-1}}s_{j}^{-1}s_{k}\underline{s_{j}s_{j}^{-1}}s_{k}^{-1} \\
&= \underline{s_{j}s_{j}^{-1}}\underline{s_{k}s_{k}^{-1}} \\
&= e
\end{align*}
\end{proof}

\begin{lem} \label{2sqr}
Let $\Gamma$ be a diagram of finite type containing the following 4-cycle:
\begin{tikzpicture}[baseline=-0.5ex]
\matrix[matrix of math nodes,column sep={40pt,between origins},row
    sep={40pt,between origins}] at (6,0)
  {
    & |[name = ul]|{i}\circ& |[name = ur]|\circ{j}\\
    &|[name=dl]|{l} \circ &|[name=dr]| \circ{k} \\
  };
\draw[->]
  (ul) to node[above] {1} (ur)
  ;
  \draw[->]
  (dl) to node[left]{2}(ul)
  ;
  \draw[->]
  (dr) to node[below]{1}(dl)
  ;
  \draw[->]
  (ur) to node[right]{2} (dr)
  ;
\end{tikzpicture}
and let A be the group with generators $s_{1}, \dots, s_{n}$ defined by $\Gamma$. Then the relations t(i, j) = e and t(k, l) = e are equivalent.
\end{lem}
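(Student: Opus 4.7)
The plan is to adapt the conjugation argument from Lemma~\ref{2tri}. First, unfolding definitions gives
\[
p(i,j)=s_j^{-1}s_k^{-1}s_ls_ks_j,\qquad p(k,l)=s_l^{-1}s_i^{-1}s_js_is_l,
\]
so that $t(i,j)=s_ip(i,j)s_i^{-1}p(i,j)^{-1}$ and $t(k,l)=s_kp(k,l)s_k^{-1}p(k,l)^{-1}$ have explicit expressions as words of length twelve in the generators.

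The core step is to exhibit a word $g$ in $s_i,s_j,s_k,s_l$ such that, using only the (T2) relations of $A$ together with the commutations $s_is_k=s_ks_i$ and $s_js_l=s_ls_j$ (which hold because $i,k$ and $j,l$ are not directly joined in the chordless cycle), one can derive the identity
\[
g\cdot t(i,j)\cdot g^{-1} \;=\; s_k^{-1}p(k,l)s_kp(k,l)^{-1}.
\]
Since the right-hand side is the commutator $[s_k^{-1},p(k,l)]$, which vanishes precisely when $[s_k,p(k,l)]=t(k,l)$ vanishes, and since conjugation by $g$ is a bijection of $A$ preserving triviality, this single identity yields both implications of the equivalence at once.

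To produce $g$, I would look for the analogue of the conjugator $s_k^{-1}s_j$ used in Lemma~\ref{2tri}; the natural candidates are short words like $s_l^{-1}s_k^{-1}s_js_k$ or similar expressions that straddle the two weight-$2$ edges of the cycle. The simplifications will use Lemma~\ref{Extra-Rel}(a) on the weight-$1$ edges $i$--$j$ and $k$--$l$, Lemma~\ref{Extra-Rel}(b) on the weight-$2$ edges $j$--$k$ and $l$--$i$, and the two chordless commutations.

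The main obstacle is the length and delicacy of the computation. The intermediate words grow before they simplify, and unlike the pure weight-$1$ case of Lemma~\ref{Sym-Lem}, the weight-$2$ braid move permutes four symbols rather than three, so one cannot hope for local cancellations without carefully interleaving weight-$1$ and weight-$2$ moves. Getting the interleaving right --- and in particular choosing $g$ so that after the first round of braid moves the middle of the word rearranges into the form $s_k^{-1}\cdots s_k\cdots$ of a commutator based at $s_k$ --- is where the real work of the proof lies.
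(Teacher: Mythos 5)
Your overall framework is sound and is in fact exactly the paper's strategy: conjugate $t(i,j)=s_ip(i,j)s_i^{-1}p(i,j)^{-1}$ by a suitable word $g$ so that it becomes $s_k^{-1}p(k,l)s_kp(k,l)^{-1}$, and then note that triviality of this last commutator is equivalent to $t(k,l)=e$, while conjugation preserves triviality. Your expansions $p(i,j)=s_j^{-1}s_k^{-1}s_ls_ks_j$ and $p(k,l)=s_l^{-1}s_i^{-1}s_js_is_l$ are also correct.

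However, there is a genuine gap: the central step is never carried out. You do not exhibit a conjugator that works, you only list unverified candidates (``$s_l^{-1}s_k^{-1}s_js_k$ or similar''), and you explicitly defer the verification, saying that choosing $g$ and making the word rearrange into a commutator based at $s_k$ ``is where the real work of the proof lies.'' That deferred work is essentially the entire content of the lemma, so as written this is a plan rather than a proof. For comparison, the paper takes $g=s_k^{-1}s_l^{-1}s_j$ and verifies in about five lines that
\begin{align*}
s_k^{-1}s_l^{-1}s_j\,\bigl(s_ip(i,j)s_i^{-1}p(i,j)^{-1}\bigr)\,s_j^{-1}s_ls_k \;=\; s_k^{-1}p(k,l)s_kp(k,l)^{-1},
\end{align*}
and the computation is much less delicate than you anticipate: after cancelling the adjacent $s_js_j^{-1}$, it uses only the weight-$1$ braid moves on the edges $i$--$j$ and $k$--$l$ (Lemma \ref{Extra-Rel}(a)); the weight-$2$ relations on $j$--$k$ and $l$--$i$ and the chordless commutations are not needed at all. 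To complete your argument you must either verify your candidate conjugator explicitly or adopt one that demonstrably works, and write out the manipulation.
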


\begin{proof}
We have that
\begin{align*}
& s_{k}^{-1}s_{l}^{-1}s_{j}(s_{i}p(i, j)s_{i}^{-1}p(i, j)^{-1})s_{j}^{-1}s_{l}s_{k} \\
&= s_{k}^{-1}s_{l}^{-1}s_{j}(s_{i}s_{j}^{-1}s_{k}^{-1}s_{l}s_{k}s_{j}s_{i}^{-1}s_{j}^{-1}s_{k}^{-1}s_{l}^{-1}s_{k}s_{j})s_{j}^{-1}s_{l}s_{k} \\
&= s_{k}^{-1}s_{l}^{-1}(s_{j}s_{i}s_{j}^{-1})(s_{k}^{-1}s_{l}s_{k})(s_{j}s_{i}^{-1}s_{j}^{-1})s_{k}^{-1}s_{l}^{-1}s_{k}s_{l}s_{k} \\
&= s_{k}^{-1}s_{l}^{-1}s_{i}^{-1}s_{j}s_{i}s_{l}s_{k}s_{l}^{-1}s_{i}^{-1}s_{j}^{-1}s_{i}s_{k}^{-1}(s_{l}^{-1}s_{k}s_{l})s_{k} \\
&= s_{k}^{-1}s_{l}^{-1}s_{i}^{-1}s_{j}s_{i}s_{l}s_{k}s_{l}^{-1}s_{i}^{-1}s_{j}^{-1}s_{i}s_{l} \\
&= s_{k}^{-1}p(k, l)s_{k}p(k, l)^{-1}
\end{align*}
\end{proof}

Finally, we conclude the section by establishing a relationship between the groups defined by $\Gamma$ and $\Gamma^{op}$, the diagram obtained by reversing all arrows in $\Gamma$.

\begin{lem}\label{lem:op_homomorphism}
Let A$_{\Gamma}$ be generated by $s_{1}, \dots, s_{n}$, and let A$_{\Gamma^{op}}$ be generated by $r_{1}, \dots, r_{n}$. Then the map $$\Delta: s_{i} \rightarrow r_{i}^{-1}$$ defines an isomorphism between A$_{\Gamma}$ and A$_{\Gamma^{op}}.$
\end{lem}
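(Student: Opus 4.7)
My plan is to show that $\Delta$ is a well-defined group homomorphism by verifying that every defining relation of $A_\Gamma$ holds after applying $\Delta$ inside $A_{\Gamma^{op}}$, and then to exhibit a two-sided inverse by performing the symmetric construction on $\Gamma^{op}$.

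For the (T2) relations, I use that edge weights, and hence the integers $m_{ij}$, are unchanged under reversing arrow orientations. Thus the analogous relation $\langle r_i,r_j\rangle^{m_{ij}}=\langle r_j,r_i\rangle^{m_{ij}}$ already holds in $A_{\Gamma^{op}}$. Taking the inverse of both sides produces $\langle r_j^{-1},r_i^{-1}\rangle^{m_{ij}}=\langle r_i^{-1},r_j^{-1}\rangle^{m_{ij}}$, which is precisely the image of the (T2) relation under $\Delta$.

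For the (T3) relations, let $(i_0,\ldots,i_{d-1})$ be a chordless cycle tuple in $\Gamma$ meeting one of the weight conditions of (T3). Reversing arrows produces a chordless cycle in $\Gamma^{op}$ with the same underlying edge weights, and by inspecting the three cycle shapes allowed by Proposition~\ref{cycle_types} some cyclic shift of the reversed tuple satisfies the corresponding (T3) weight condition in $\Gamma^{op}$, so an analogous (T3) relation is directly imposed in $A_{\Gamma^{op}}$. I then compute
\[
\Delta(p(i_0,i_1)) \;=\; r_{i_1}r_{i_2}\cdots r_{i_{d-2}}\,r_{i_{d-1}}^{-1}\,r_{i_{d-2}}^{-1}\cdots r_{i_1}^{-1}
\]
and, using Lemma~\ref{Extra-Rel} together with the commutation of non-adjacent generators forced by chordlessness, rewrite the desired commutator $\Delta(t(i_0,i_1))=[r_{i_0}^{-1},\Delta(p(i_0,i_1))]$ as a conjugate of the directly-imposed (T3) commutator in $A_{\Gamma^{op}}$; concretely, conjugating the imposed commutator by an appropriate word in the $r_{i_a}$'s (adjacent to the starting vertex of the relation) and inverting produces exactly $\Delta(t(i_0,i_1))$, which therefore vanishes. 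When the imposed starting point in $A_{\Gamma^{op}}$ is not $i_0$ itself, the symmetry lemmas \ref{Sym-Lem}, \ref{2tri}, and~\ref{2sqr} applied inside $A_{\Gamma^{op}}$ may be invoked to propagate the relation as needed; and for the weight-$2$ shapes (the $3$-cycle of shape $(2,2,1)$ and the $4$-cycle with alternating weights $(1,2,1,2)$) one additionally appeals to Lemma~\ref{Extra-Rel}(b) and, if necessary, to the extra relation of Lemma~\ref{lem:extra_square_relation}.

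For invertibility, running the identical construction with the roles of $\Gamma$ and $\Gamma^{op}$ exchanged produces a homomorphism $\Delta^\prime\colon A_{\Gamma^{op}}\to A_\Gamma$ sending $r_i\mapsto s_i^{-1}$; on generators $\Delta^\prime\circ\Delta(s_i)=s_i$ and $\Delta\circ\Delta^\prime(r_i)=r_i$, so $\Delta$ and $\Delta^\prime$ are mutually inverse. I expect the main obstacle to be the word-level manipulation in the (T3) case, especially for the weight-$2$ cycles: one must pick a cyclic shift of the reversed cycle whose (T3)-weight condition is actually satisfied (since other shifts do not directly give an imposed relation), and then chain together Lemma~\ref{Extra-Rel}, the non-adjacency commutations, and Lemma~\ref{lem:extra_square_relation} in the right order to realize $\Delta(t(i_0,i_1))$ as a conjugate of the imposed commutator.
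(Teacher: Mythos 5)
Your proposal is correct and follows essentially the same route as the paper: well-definedness is checked by inverting the (T2) relations and by deriving the image of each (T3) relation from the (T3) relation imposed on the reversed chordless cycle in the opposite diagram, using the braid relations of Lemma~\ref{Extra-Rel} and chordlessness (the paper carries out these word computations explicitly for the all-weight-1 cycle, the $(2,2,1)$ triangle, and the $(1,2,1,2)$ square), and invertibility follows from the symmetric map $r_i\mapsto s_i^{-1}$ exactly as you describe. The only difference is that you leave the cycle computations as a sketch (citing Lemmas~\ref{Sym-Lem}, \ref{2tri}, \ref{2sqr}, and \ref{lem:extra_square_relation}) where the paper writes them out, but the tools you name are the right ones and suffice.
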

\begin{proof}
It suffices to show that the map is well-defined, or that the elements $s_{1}^{-1}, \dots, s_{n}^{-1}$ satisfy the relations (T2) and (T3) of A$_{\Gamma^{op}}$. One can see that the inverse elements satisfy (T2) in A$_{\Gamma^{op}}$ by taking the inverse of both sides of the corresponding relation in A$_{\Gamma}$. To see that the elements satisfy (T3) in A$_{\Gamma^{op}}$, note that for a chordless cycle in $\Gamma$ with all weights equal to one, we have $$s_{0}s_{1}^{-1}\dots s_{d-2}^{-1}s_{d-1}s_{d-2}\dots s_{1} = s_{1}^{-1}\dots s_{d-2}^{-1}s_{d-1}s_{d-2}\dots s_{1}s_{0}$$ by the relation t(0,1) = e in (T3) in A$_{\Gamma}$. But then applying relations from (T2), we have that $$s_{0}s_{1}^{-1}\dots s_{d-3}^{-1}s_{d-1}s_{d-2}s_{d-1}^{-1}s_{d-3}\dots s_{1} = s_{1}^{-1}\dots s_{d-1}s_{d-2}s_{d-1}^{-1}\dots s_{1}s_{0},$$ and since the cycle is chordless, we then have $$s_{0}s_{d-1}s_{1}^{-1}\dots s_{d-3}^{-1}s_{d-2}s_{d-3}\dots s_{1}s_{d-1}^{-1} = s_{d-1}s_{1}^{-1}\dots s_{d-3}^{-1}s_{d-2}s_{d-3}\dots s_{1}s_{d-1}^{-1}s_{0}.$$ Repeating this process, we find that $$s_{0}s_{d-1}s_{d-2}\dots s_{2}s_{1}s_{2}^{-1}\dots s_{d-2}^{-1}s_{d-1}^{-1} = s_{d-1}s_{d-2}\dots s_{2}s_{1}s_{2}^{-1}\dots s_{d-2}^{-1}s_{d-1}^{-1}s_{0}.$$ But this occurs if and only if $s_{1}^{-1}, \dots, s_{n}^{-1}$ satisfies the relation t(0, d-1) = e in A$_{\Gamma^{op}}.$

For a triangle labeled as in ~\ref{2tri}, by the relation t(k, i) = e we have $$s_{k}s_{i}^{-1}s_{j}s_{i}s_{k}^{-1} = s_{i}^{-1}s_{j}s_{i}.$$ Hence $$s_{k}s_{j}s_{i}s_{j}^{-1}s_{k}^{-1} = s_{j}s_{i}s_{j}^{-1}.$$ But as before, this can occur if and only if $s_{i}^{-1}, s_{j}^{-1}, s_{k}^{-1}$ satisfy the relation t(k, j) = e in A$_{\Gamma^{op}}$.

Finally, given a square labeled as in ~\ref{2sqr} and the relations t(1, 2) = e and t(3, 4) = e, we have
\begin{align*}
& s_{j}s_{i}s_{l}s_{k}s_{l}^{-1}s_{i}^{-1} \\
&= s_{i}s_{i}^{-1}s_{j}s_{i}s_{l}s_{k}s_{l}^{-1}s_{i}^{-1} \\
&= s_{i}s_{j}s_{i}s_{j}^{-1}s_{k}^{-1}s_{l}s_{k}s_{i}^{-1} \\
&= s_{i}s_{j}(s_{i}s_{j}^{-1}s_{k}^{-1}s_{l}s_{k}s_{j})s_{j}^{-1}s_{i}^{-1} \\
&= s_{i}s_{j}s_{j}^{-1}(s_{k}^{-1}s_{l}s_{k})s_{j}(s_{i}s_{j}^{-1}s_{i}^{-1}) \\
&= s_{i}s_{l}s_{k}s_{l}^{-1}s_{j}s_{j}^{-1}s_{i}^{-1}s_{j} \\
&= s_{i}s_{l}s_{k}s_{l}^{-1}s_{i}^{-1}s_{j}
\end{align*}
But this relation holds if and only if $s_{i}^{-1}, \dots, s_{l}^{-1}$ satisfy t(j, i) = e in A$_{\Gamma^{op}}$. Therefore, we are done.
\end{proof}

\section{Main Result}\label{sec:main_result}
\noindent In Section \ref{sec:proof_of_main_result} we prove our main result:

\begin{thm}\label{thm:main}
Let $\Gamma$ be a diagram of finite type, and let $\Gamma^{\prime} = \mu_k(\Gamma)$ be the mutation of $\Gamma$ at vertex $k$. Then $A_{\Gamma} \cong A_{\Gamma^{\prime}}$
\end{thm}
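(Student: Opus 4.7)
The plan is to construct explicit mutually inverse homomorphisms $\phi\colon A_\Gamma \to A_{\Gamma'}$ and $\psi\colon A_{\Gamma'} \to A_\Gamma$, mimicking the Coxeter analogue in \cite{BM13} but adjusted to respect the non-involutivity of the Artin generators. Writing $s_i$ for the generators of $A_\Gamma$ and $s_i'$ for those of $A_{\Gamma'}$, the first step is to set $\phi(s_j) = s_j'$ for every vertex $j$ not adjacent to $k$, and to set $\phi(s_k) = (s_k')^{-1}$, which encodes the fact that mutation reverses every edge at $k$. For a neighbor $i$ of $k$, the image $\phi(s_i)$ is defined to be a conjugate of $s_i'$ by a short word in $s_k'$; the precise conjugating word depends on the direction and weight of the edge between $i$ and $k$ and is dictated by the six local pictures of Corollary \ref{local_three_picture}, calibrated so that (T2) between $s_i$ and $s_k$ is sent to an identity that follows from (T2') between $s_i'$ and $s_k'$.

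The heart of the argument is showing that $\phi$ is well-defined, i.e.\ that the proposed images satisfy every defining relation of $A_\Gamma$. The (T2) relations between two vertices neither adjacent to $k$ are automatic. The (T2) relations involving $s_k$ reduce to (T2') by inversion. The delicate pair-case is a (T2) relation between two neighbors $i,j$ of $k$: after substitution, the identity to check sits inside the induced subdiagram on $\{i,j,k\}$, and must be verified for each of the six configurations of Corollary \ref{local_three_picture}. In cases where $\Gamma'$ acquires a new edge $i \to j$, the new (T3') relation on the resulting triangle, combined with Lemma \ref{Extra-Rel}, will close the computation.

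The principal obstacle is the verification of (T3) for chordless cycles. By Lemma \ref{lem:chordless_cycles}, every chordless cycle in $\Gamma$ is either disjoint from $k$, in which case it persists into $\Gamma'$ and the verification is immediate, or is one of the configurations (a)--(l) of Figures \ref{fig:first_cycle_types} and \ref{fig:second_cycle_types}. For each such configuration one translates $t(i_0,i_1) = e$ in $A_\Gamma$ via $\phi$ into a word identity in $A_{\Gamma'}$ and derives it from (T2') together with the (T3') relation attached to the cycle $C'$ opposite. The words involved grow with the cycle length, so the redundancy results of Section \ref{sec:one_relation} are essential: Lemma \ref{Sym-Lem} lets us fix a single starting edge of $C$ and obtain all rotated forms for free, while Lemmas \ref{2tri} and \ref{2sqr} do the same for the weight-$2$ triangles and squares. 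The weight-$2$ cases (b), (c), (d) of Figure \ref{fig:first_cycle_types} and (e)--(h) of Figure \ref{fig:second_cycle_types} are expected to be the most involved, because $C'$ carries mixed weights and one must invoke the auxiliary commutator computation of Lemma \ref{lem:extra_square_relation}.

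Finally, since $\mu_k$ is an involution on finite-type diagrams, the analogous recipe applied to $\Gamma'$ with respect to the vertex $k$ yields a homomorphism $\psi\colon A_{\Gamma'} \to A_\Gamma$. A direct check on the images of generators, using only (T2) in each group, shows $\psi \circ \phi = \mathrm{id}_{A_\Gamma}$ and $\phi \circ \psi = \mathrm{id}_{A_{\Gamma'}}$, so $\phi$ is the desired isomorphism. As a sanity check, composing $\phi$ with the abelianization-compatible quotient by $\{s_i^2 = e\}$ recovers the Barot--Marsh isomorphism $W_\Gamma \cong W_{\Gamma'}$, which can be used to guide the choice of conjugators in Step 1 if an initial attempt fails to preserve some cycle relation.
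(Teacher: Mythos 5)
Your overall architecture---a conjugation-type substitution on the generators, verified against the $(T2^\prime)$ relations via the local pictures of Corollary \ref{local_three_picture} and against $(T3^\prime)$ via the cycle catalogue of Lemma \ref{lem:chordless_cycles}, with the redundancy lemmas of Section \ref{sec:one_relation} cutting down the case work---is exactly the paper's (Lemmas \ref{lem:r2a_relations}, \ref{lem:r2b_relations}, \ref{lem:r3_relations}). But the specific map you propose cannot be made well-defined. Setting $\phi(s_k) = (s_k')^{-1}$ while sending each neighbor $i$ of $k$ to a conjugate of $s_i'$ kills every weight-$1$ $(T2)$ relation at $k$ already in the abelianization: $A_{\Gamma'}$ abelianizes to a free abelian group in which a weight-$1$ edge (which survives mutation with the same weight) forces $[s_i'] = [s_k']$, whereas the image under $\phi$ of $s_is_ks_i = s_ks_is_k$ abelianizes to $[s_i'] = -[s_k']$, so one would need $2[s_k'] = 0$---impossible. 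No calibration of the conjugating word can repair this; the correct map must fix the generator at $k$ (the paper takes $\varphi(r_i) = s_ks_is_k^{-1}$ when $i \to k$ in $\Gamma$ and $\varphi(r_i) = s_i$ otherwise, so in particular $\varphi(r_k) = s_k$). The sign trick $s \mapsto s^{-1}$ does appear in the paper, but applied to \emph{all} generators simultaneously, as the isomorphism $A_\Gamma \cong A_{\Gamma^{op}}$ of Lemma \ref{lem:op_homomorphism}.

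Your final step also conceals the central difficulty of the non-involutive setting. The ``analogous recipe applied to $\Gamma'$'' conjugates the complementary set of neighbors of $k$ (mutation reverses the arrows at $k$), and composing the two one-sided conjugation maps yields conjugation by the generator at $k$ on part of the generating set, not the identity; there is no ``direct check using only $(T2)$'' that removes this, precisely because $s_k^2 = e$ is unavailable. The paper's resolution is to manufacture the inverse as $\psi = \Delta' \circ \varphi_{op} \circ \Delta$, routing through $\Gamma^{op}$ and $(\Gamma^{op})^\prime = (\Gamma^\prime)^{op}$ via Lemma \ref{lem:op_homomorphism}; unwinding, $\psi$ sends $s_k \mapsto r_k$ and $s_i \mapsto r_k^{-1}r_ir_k$ exactly for those $i$ with $i \to k$ in $\Gamma$, which visibly undoes $\varphi$. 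This opposite-diagram device is the key idea missing from your plan, and without it (or an equivalent substitute) the two maps you describe are neither well-defined nor mutually inverse.
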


The structure of the proof will be analogous to the structure of the proof of Theorem A in \cite{BM13}, but with most of the details changed to account for the fact that the $(R1)$ relations are not included.  In particular, many of the computations used to prove the supporting lemmas and propositions of Theorem A rely heavily on the $(R1)$ relations and therefore do not apply to the supporting lemmas and propositions of Theorem \ref{thm:main}.

Throughout the section we will fix a diagram of finite type $\Gamma$, a vertex $k$ of $\Gamma$, and write $\Gamma^{\prime} = \mu_k(\Gamma)$.  We will write $s_i$, $r_i$, $q_i$, and $u_i$ for the generators corresponding to vertex $i$ of $A_{\Gamma}$, $A_{\Gamma^\prime}$, $A_{\Gamma^{op}}$, and $A_{\left(\Gamma^\prime\right)^{op}}$, respectively. Note that the $u_i$ are generators $A_{\left(\Gamma^{op}\right)^\prime}$ as well, since $\left(\Gamma^\prime\right)^{op} = \left(\Gamma^{op}\right)^\prime$. In the proof of Theorem \ref{thm:main} we will use Lemma \ref{lem:op_homomorphism} along with the following proposition, which we prove in Section \ref{sec:proof_of_ti_relations}.

\begin{prop}\label{prop:ti_relations}
The map $\varphi \colon A_{\Gamma^\prime}\rightarrow A_{\Gamma}$ defined by
\begin{displaymath}
\varphi(r_i) = \begin{cases} s_ks_is_k^{-1} & \mbox{if there is a (possibly weighted) arrow } i \rightarrow k \mbox{ in } \Gamma\\
s_i & \mbox{otherwise}\\
\end{cases}
\end{displaymath}

\noindent is a group homomorphism.
\end{prop}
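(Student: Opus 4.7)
The plan is to verify that the images $\{\varphi(r_i)\}$ in $A_\Gamma$ satisfy every defining relation of $A_{\Gamma^\prime}$, namely the (T2$^\prime$) braid relations and the (T3$^\prime$) commutator relations associated to chordless cycles. Since $A_{\Gamma^\prime}$ is given by a presentation, this suffices to extend $\varphi$ uniquely to a well-defined group homomorphism.

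To handle (T2$^\prime$), I would use the fact that mutation at $k$ preserves the weight of every edge incident to $k$ (only their orientations reverse). Thus when $j=k$ or $i=k$, the (T2$^\prime$) exponent satisfies $m_{ik}^\prime=m_{ik}$, and its image under $\varphi$ is obtained from the corresponding (T2) relation in $A_\Gamma$ by simultaneously conjugating both sides by $s_k$, which preserves the braid relation on the nose. For pairs $i,j\neq k$, I would split into cases according to Corollary \ref{local_three_picture}; in each case, after tracking which of $\varphi(r_i),\varphi(r_j)$ have been conjugated by $s_k$, the desired braid relation reduces to a combination of the (T2) relations for the triangle $\{i,j,k\}$ together with Lemma \ref{Extra-Rel}.

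To handle (T3$^\prime$), I would invoke Lemma \ref{lem:chordless_cycles}, which classifies every chordless cycle $C^\prime$ in $\Gamma^\prime$ as one of the types (a)--(l) in Figures \ref{fig:first_cycle_types} and \ref{fig:second_cycle_types}. Case (k) is immediate, since $C^\prime$ is disjoint from $k$, $\varphi$ restricts to the identity on the relevant generators, and the same chordless cycle persists in $\Gamma$ with the same (T3) relation. For cases (a)--(h), where $C^\prime$ is a 3- or 4-cycle meeting $k$, I would apply Lemmas \ref{Sym-Lem}, \ref{2tri}, and \ref{2sqr} to select a convenient starting vertex (often $k$ itself or a neighbor of $k$), then reduce the commutator $t(i_0,i_1)=e$ to a direct manipulation involving only the finitely many generators $s_i$ for $i$ in the picture on the left of the corresponding subfigure. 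For cases (i), (j), and (l), $C^\prime$ is a longer chordless cycle through $k$, and the matching diagram in $\Gamma$ contains a shorter chordless cycle $C$; the strategy is to apply the (T3) relation for $C$ already present in $A_\Gamma$ and propagate the conjugation by $s_k$ through the long word $p(i_0,i_1)$, exploiting chordlessness of $C^\prime$ to ensure that $s_k$ commutes past every intermediate generator.

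The main obstacle will be the mixed-weight cases (b)--(f) and (h), where the image of the (T3$^\prime$) relation produces commutator terms that do not simplify via (T2) alone. In these cases Lemma \ref{lem:extra_square_relation} is indispensable: it supplies an additional identity, holding in any triangle with two weight-$2$ edges, that is exactly what is needed to close the computation. The bookkeeping becomes most delicate in cases (i), (j), and (l), where one must carefully track how conjugation by $s_k$ interacts with the long alternating word defining $p(i_0,i_1)$; the crucial simplification is that $s_k$ has exactly two neighbors in $C^\prime$, so every non-trivial manipulation is localized to a region of bounded size, reducing these cases to the short-cycle analysis already performed plus a use of Lemma \ref{Sym-Lem} to choose the optimal base vertex.
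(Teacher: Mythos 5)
Your plan follows the paper's proof essentially verbatim: the paper splits the verification into Lemmas \ref{lem:r2a_relations} and \ref{lem:r2b_relations} for the $(T2^\prime)$ relations (organized exactly by whether one of $i,j$ equals $k$, whether both are adjacent to $k$, and then by the cases of Corollary \ref{local_three_picture}) and Lemma \ref{lem:r3_relations} for the $(T3^\prime)$ relations (organized by the cases of Lemma \ref{lem:chordless_cycles}), with the Section \ref{sec:one_relation} lemmas used to reduce each cycle to a single convenient base relation. The only bookkeeping slip is that Lemma \ref{lem:extra_square_relation} is actually what closes the $(T2^\prime)$ computation for the mutated triangle with two weight-$2$ edges (case (f) of Corollary \ref{local_three_picture}), where the $(T2)$ relations and Lemma \ref{Extra-Rel} alone do not suffice; you reserve it for the $(T3^\prime)$ cases, which the paper instead handles by invoking the $(T3)$ relations of $\Gamma$ directly.
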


\subsection{Proof of Main Result}\label{sec:proof_of_main_result}

\begin{proof}[Proof of Theorem \ref{thm:main}]
By Proposition \ref{prop:ti_relations} $\varphi\colon A_{\Gamma^\prime}\rightarrow A_\Gamma$ is a group homomorphism and $\varphi_{op}\colon A_{\Gamma^{op}}\rightarrow A_{\left(\Gamma^{op}\right)^\prime}$ defined by

\begin{displaymath}
\varphi_{op}(q_i) = \begin{cases}    u_ku_iu_k^{-1} & \mbox{if there is a (possibly weighted) arrow } i \rightarrow k \mbox{ in } \left(\Gamma^{op}\right)^\prime\\
				u_i & \mbox{otherwise}\\
	\end{cases}
\end{displaymath}

\noindent is a group homomorphism as well.  By Lemma \ref{lem:op_homomorphism}, there exist two well-defined homomorphisms $\Delta\colon A_{\Gamma}\rightarrow A_\Gamma^{op}$ defined by $\Delta(s_i) = q_i^{-1}$ and $\Delta^\prime\colon A_{\left(\Gamma^\prime\right)^{op}} \rightarrow A_{\Gamma^\prime}$ defined by $\Delta^\prime(u_i) = r_i^{-1}$. We then have a homomorphism 

$$\psi = \Delta^\prime\circ\varphi_{op}\circ\Delta\colon A(\Gamma)\rightarrow A(\Gamma^{op})\rightarrow A(\left(\Gamma^{op}\right)^{\prime}) \rightarrow A(\Gamma^{\prime})$$

Suppose that there is an arrow $i\rightarrow k$ in $\Gamma$. Then there will be an arrow $k\rightarrow i$ in $\Gamma^{op}$ and hence an arrow $i\rightarrow k$ in $\left(\Gamma^{op}\right)^{\prime}$, so we have that

$$\psi\circ\varphi(r_i) = \Delta^\prime(\varphi_{op}(\Delta(\varphi(r_i)))) = \Delta^\prime(\varphi_{op}(\Delta(s_ks_is_k^{-1}))) = \Delta^\prime(\varphi_{op}(q_k^{-1}q_i^{-1}q_k)) = \Delta^\prime(u_i^{-1}) = r_i$$

Similarly if there is an arrow $k\rightarrow i$ or no arrow between $i$ and $k$ in $\Gamma$ then there will be an arrow $k\rightarrow i$ or no arrow between $i$ and $k$ in $\left(\Gamma^{op}\right)^{\prime}$, respectively. In each of these cases we have that

$$\psi\circ\varphi(r_i) = \Delta^\prime(\varphi_{op}(\Delta(\varphi(r_i)))) = \Delta^\prime(\varphi_{op}(\Delta(s_i))) = \Delta^\prime(\varphi_{op}(q_i^{-1})) = \Delta^\prime(u_i^{-1}) = r_i$$

In other words, starting at any node in the square below and following the maps around gives the identity map.

\begin{figure*}[h]
\hspace*{-4em}
\begin{tikzpicture}[baseline=-0.5ex]
\matrix[matrix of math nodes,column sep={60pt,between origins},row
    sep={60pt,between origins}] at (6,0)
  {
    & |[name = ul]|{A_{\Gamma^{op}}}\circ& |[name = ur]|\circ{A_{\left(\Gamma^\prime\right)^{op}}}\\
    &|[name=dl]|{A_{\Gamma}} \circ &|[name=dr]| \circ{A_{\Gamma^\prime}} \\
  };
\draw[->]
  (ul) to node[above]{$\varphi_{op}$} (ur)
  ;
  \draw[->]
  (dl) to node[left]{$\Delta$}(ul)
  ;
  \draw[->]
  (dr) to node[below]{$\varphi$}(dl)
  ;
  \draw[->]
  (ur) to node[right]{$\Gamma^\prime$} (dr)
  ;
\end{tikzpicture}
\end{figure*}

Thus $\psi\circ\varphi$ is the identity map on $A_{\Gamma^{\prime}}$.  By a similar argument $\varphi\circ\psi$ is the identity map on $A_\Gamma$, and hence $A_\Gamma\cong A_{\Gamma^{\prime}}$.
\end{proof}

\subsection{Proof of Proposition \ref{prop:ti_relations}}\label{sec:proof_of_ti_relations}

We prove Proposition \ref{prop:ti_relations} by showing that that the elements $\varphi(r_i)\in A_\Gamma$ satisfy the $(T2^\prime)$ and $(T3^\prime)$ relations in $A_{\Gamma^\prime}$.  The proof that the $\varphi(r_i)$ satisfy these relations is divided among Lemmas \ref{lem:r2a_relations}, \ref{lem:r2b_relations}, and \ref{lem:r3_relations}.  Throughout the proofs we write $t_i = \varphi(r_i)$ and $m_{ij}^\prime$ for the weight of the edge between $i$ and $j$ in $\Gamma^\prime$.

\begin{lem}\label{lem:r2a_relations}
Let $i,j$ be distinct vertices of $\Gamma$.
\begin{enumerate}[(a)]
\item If $i=k$ or $j=k$, then $\langle t_it_j \rangle^{m_{ij}^\prime} = \langle t_jt_i \rangle^{m_{ij}^\prime}$.
\item If at most one of $i,j$ is connected to $k$ in $\Gamma$, then $\langle t_it_j \rangle^{m_{ij}^\prime} = \langle t_jt_i \rangle^{m_{ij}^\prime}$.
\end{enumerate}
\end{lem}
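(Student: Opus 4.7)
The plan is to prove both parts by reducing to the $(T2)$ relations in $A_\Gamma$, using two observations. First, the mutation rule only alters the edge between $i$ and $j$ when both of them are adjacent to $k$ (a modification only appears on the $ij$-edge via some path $i \to k \to j$); hence, in the setting of this lemma, $m_{ij}' = m_{ij}$. Second, when $t_i = s_k s_i s_k^{-1}$, the $s_k^{\pm 1}$ factors inside $\langle t_i, t_j \rangle^m$ either telescope against $s_k$-factors in $t_j$ or slide past $s_j$-factors, allowing the whole alternating word to be rewritten as a word in $s_i, s_j, s_k$ that can be manipulated using $(T2)$ in $A_\Gamma$.

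For part (a), I will take $j = k$ without loss of generality, so $t_j = s_k$. If the edge between $i$ and $k$ is absent or oriented $k \to i$, then $t_i = s_i$ and the relation $\langle s_i, s_k\rangle^{m_{ik}} = \langle s_k, s_i\rangle^{m_{ik}}$ from $(T2)$ in $A_\Gamma$ finishes the case immediately (since $m_{ij}' = m_{ik}$). If instead the arrow is $i \to k$, then $t_i = s_k s_i s_k^{-1}$ and I will observe that conjugation by $s_k^{-1}$ sends $t_i \mapsto s_i$ and $t_j \mapsto s_k$. Since conjugation is a homomorphism it takes $\langle t_i, t_j\rangle^{m_{ik}}$ to $\langle s_i, s_k\rangle^{m_{ik}}$ and $\langle t_j, t_i\rangle^{m_{ik}}$ to $\langle s_k, s_i\rangle^{m_{ik}}$, which are equal by $(T2)$; conjugating back by $s_k$ gives the desired equality of the $t$-words.

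For part (b), I will take $j$ to be the vertex not adjacent to $k$, so $t_j = s_j$ and, by $(T2)$ in $A_\Gamma$, $s_j$ commutes with $s_k$. As noted above, $m_{ij}' = m_{ij}$ in this situation. If $t_i = s_i$ the claim is again immediate from $(T2)$ in $A_\Gamma$. If $t_i = s_k s_i s_k^{-1}$, then the commutativity of $s_k$ with $s_j$ lets me move each $s_k^{\pm 1}$ past every factor $t_j = s_j$, and the interior $s_k^{-1}\cdot s_k$ pairs cancel, leaving $\langle t_i, t_j\rangle^{m_{ij}} = s_k \langle s_i, s_j\rangle^{m_{ij}} s_k^{-1}$ and similarly $\langle t_j, t_i\rangle^{m_{ij}} = s_k \langle s_j, s_i\rangle^{m_{ij}} s_k^{-1}$. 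A final application of $(T2)$ in $A_\Gamma$ equates the interior brackets.

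Neither half is deep; the main thing to get right is the bookkeeping. In particular, the case analysis splits on the orientation of the edge between $i$ and $k$ (since $\varphi$ treats $i \to k$ differently from $k \to i$), and I must be careful to invoke the mutation rules to justify $m_{ij}' = m_{ij}$ in each case. Once those are in place, all nontrivial computation collapses either into the telescoping $s_k s_i s_k^{-1}\cdot s_k = s_k s_i$ or into the commutation of $s_k$ with $s_j$, and the $(T2)$ relations in $A_\Gamma$ do the rest.
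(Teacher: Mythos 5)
Your proof is correct and follows essentially the same route as the paper: reduce to the $(T2)$ relations in $A_\Gamma$ by observing that $m'_{ij}=m_{ij}$ under the stated hypotheses and that the $s_k^{\pm1}$ factors either conjugate out uniformly (case (a)) or commute past $s_j$ and cancel (case (b)). Your case (a) is in fact slightly cleaner than the paper's, which verifies the conjugation identity by separate hand computations for $m_{ij}=3,4,6$ rather than invoking once that conjugation by $s_k$ is an automorphism carrying $\langle s_i,s_k\rangle^{m}$ to $\langle t_i,t_k\rangle^{m}$.
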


\begin{proof}
For case $(a)$, suppose without loss of generality that $i=k$.  Note that $m_{ij}^\prime = m_{ij}$. The only nontrivial case is when there is an arrow $j\rightarrow k = i$.  Since $i$ and $j$ are connected in this case, $m_{ij}$ is one of 3, 4, or 6.

\vskip.1in
\noindent
{\sf Case $m_{ij} = 3$}.
Here $\langle s_js_i \rangle^{3} = \langle s_is_j \rangle^{3}$, so $s_is_j = s_js_is_js_i^{-1}$ and we have

$$\langle t_it_j \rangle^{3} = t_it_jt_i = s_is_is_js_i^{-1}s_i = s_is_is_j = s_is_js_is_js_i^{-1} = t_jt_it_j = \langle t_jt_i \rangle^{3}$$

\vskip.1in
\noindent
{\sf Case $m_{ij} = 4$}.  Here $\langle s_is_j \rangle^{4} = \langle s_js_i \rangle^{4}$, so $s_is_is_js_is_js_i^{-1} = s_is_js_is_j$ and therefore

$$\langle t_it_j \rangle^{4} = s_is_is_js_i^{-1}s_is_is_js_i^{-1} = s_is_is_js_is_js_i^{-1} = s_is_js_is_j = s_is_js_i^{-1}s_is_is_js_i^{-1}s_i = \langle t_jt_i \rangle^{4}$$

\vskip.1in
\noindent
{\sf Case $m_{ij} = 6$}.  Here $\langle s_is_j \rangle^{6} = \langle s_js_i \rangle^{6}$, so $s_is_is_js_is_js_is_js_i^{-1} = s_is_js_is_js_is_j$.  As in the previous case, we add and remove pairs $s_is_i^{-1}$ as necessary, giving

$$\langle t_it_j \rangle^{6} = s_is_is_js_i^{-1}s_is_is_js_i^{-1}s_is_is_js_i^{-1} = s_is_js_i^{-1}s_is_is_js_i^{-1}s_is_is_js_i^{-1}s_i = \langle t_it_j \rangle^{6}$$

For case $(b)$, the only nontrivial case is when there is an arrow $i\rightarrow k$ or $j\rightarrow k$. Without loss of generality, suppose there is an arrow $i\rightarrow k$. Since $j$ is not connected to $k$, we know that $s_js_k = s_ks_j$.

\vskip.1in
\noindent
{\sf Case $m_{ij} = 2$}.  Here $s_is_j = s_js_i$, so $s_j$ commutes with both $s_i$ and $s_k$ and we have that

$$t_it_j = s_ks_is_k^{-1}s_j = s_js_ks_is_k^{-1} = t_jt_i$$

\vskip.1in
\noindent
{\sf Case $m_{ij} = 3$}.  Here $\langle s_is_j \rangle^{3} = \langle s_js_i \rangle^{3}$, so we have that 

$$\langle t_it_j \rangle^{3} = s_ks_is_k^{-1}s_js_ks_is_k^{-1} = s_ks_is_js_is_k^{-1} = s_ks_js_is_js_k^{-1} = s_js_ks_is_k^{-1}s_j = \langle t_jt_i \rangle^{3}$$

\vskip.1in
\noindent
{\sf Case $m_{ij} = 4$}.  Here $\langle s_is_j \rangle^{4} = \langle s_js_i \rangle^{4}$, so we have that

$$\langle t_it_j \rangle^{4} = s_ks_is_k^{-1}s_js_ks_is_k^{-1}s_j = s_ks_is_js_is_js_k^{-1} = s_ks_js_is_js_is_k^{-1} = s_js_ks_is_k^{-1}s_js_ks_is_k^{-1} = \langle t_jt_i \rangle^{4}$$

\vskip.1in
\noindent
{\sf Case $m_{ij} = 6$}.  Here $\langle s_is_j \rangle^{6} = \langle s_js_i \rangle^{6}$, so we have that

$$\langle t_it_j \rangle^{6} = s_ks_is_k^{-1}s_js_ks_is_k^{-1}s_js_ks_is_k^{-1}s_j = s_ks_is_js_is_js_is_js_k^{-1} = s_ks_js_is_js_is_js_is_k^{-1} = \langle t_it_j \rangle^{6}$$
\end{proof}

\begin{lem}\label{lem:r2b_relations}
Let $i,j$ be distinct vertices of $\Gamma$ such that $i$ and $j$ are connected.  Then $\langle t_it_j \rangle^{m_{ij}^\prime} = \langle t_jt_i \rangle^{m_{ij}^\prime}$.
\end{lem}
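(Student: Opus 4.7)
The plan is to proceed by the exhaustive case analysis of Corollary~\ref{local_three_picture}: since the case where at most one of $i,j$ is connected to $k$ in $\Gamma$ is already handled by Lemma~\ref{lem:r2a_relations}, I may assume both $i$ and $j$ are connected to $k$, so the induced subdiagram on $\{i,j,k\}$ must be one of the six configurations (a)--(f) of Figure~\ref{fig:local_three}. Each configuration determines both the value of $m_{ij}^\prime$ and the precise formulas for $t_i$ and $t_j$ (either $s_\ell$ or $s_k s_\ell s_k^{-1}$, according to the orientation of the arrow at $\ell$).

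In cases (a) and (c), both arrows in $\Gamma$ point into $k$, so $t_i = s_k s_i s_k^{-1}$ and $t_j = s_k s_j s_k^{-1}$; no new edge is created, $m_{ij}^\prime = 2$, and since $i$ and $j$ are not connected in $\Gamma$, the commutation $s_i s_j = s_j s_i$ gives $t_i t_j = s_k s_i s_j s_k^{-1} = s_k s_j s_i s_k^{-1} = t_j t_i$ immediately. In cases (b), (d), (e), exactly one of the arrows in $\Gamma$ points into $k$, say $i \to k$, so $t_i = s_k s_i s_k^{-1}$ and $t_j = s_j$, while the mutation produces a new edge $i$--$j$ with $m_{ij}^\prime \in \{3,4\}$. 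In each of these, I plan to simplify $t_i$ either via Lemma~\ref{Extra-Rel}(a) (when the $i$--$k$ edge has weight $1$, giving $s_k s_i s_k^{-1} = s_i^{-1} s_k s_i$) or by direct use of the weight-$2$ braid relation between $s_i$ and $s_k$, and then verify $\langle t_i, t_j\rangle^{m_{ij}^\prime} = \langle t_j, t_i\rangle^{m_{ij}^\prime}$ by straightforward manipulation using the braid relations among $s_i, s_j, s_k$ together with the commutativity $s_i s_j = s_j s_i$ (which holds since $i$ and $j$ are unconnected in $\Gamma$ in these cases).

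The main obstacle is case (f), the triangle on $\{i,j,k\}$ with weights $(2,2,1)$: here $s_i$ and $s_j$ braid rather than commute, so the approach of the previous paragraph is unavailable, and the $(T3)$ relation for the triangle in $\Gamma$ must be invoked. With $t_i = s_k s_i s_k^{-1}$, $t_j = s_j$, and $m_{ij}^\prime = 3$, the identity to verify is $t_i t_j t_i = t_j t_i t_j$. Cyclically reordering the cycle tuple to $(k,j,i)$ puts the edge $i\to k$ of weight $2$ last, so $(T3)$ yields $[s_k,\, s_j^{-1} s_i s_j] = e$, equivalently $[s_k,\, s_i s_j s_i^{-1}] = e$, equivalently $s_k s_i s_j = s_i s_j s_i^{-1} s_k s_i$. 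Combining this commutation with $s_k^{-1} s_j s_k = s_j s_k s_j s_k^{-1} s_j^{-1}$ (from the weight-$2$ braid between $s_j$ and $s_k$) and the weight-$1$ braid $s_i s_j s_i = s_j s_i s_j$, I expect $t_i t_j t_i = t_j t_i t_j$ to reduce after some manipulation to the identity $s_i s_j s_k s_i s_k s_j = s_j s_k s_i s_j s_k s_j$, which in turn follows from a second application of $(T3)$ in the rearranged form above.

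The hard part will be case (f): it is the only case requiring $(T3)$, and the computation is longer than in the Coxeter setting of \cite{BM13} because we lack the involution relations $s_i^2 = e$, forcing careful tracking of inverses throughout.
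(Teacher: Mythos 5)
Your plan only treats the mutation direction in which the edge between $i$ and $j$ is \emph{created}. In cases (b), (d), (e) you take the induced subdiagram of $\Gamma$ to be the path $i\to k\to j$ and you explicitly justify commutativity of $s_i$ and $s_j$ by ``$i$ and $j$ are unconnected in $\Gamma$.'' But Corollary \ref{local_three_picture} allows the induced subdiagram of $\Gamma$ to be \emph{either} side of each picture, so $\Gamma$ may already contain the oriented triangle on $\{i,j,k\}$ (the right-hand diagrams of (b), (d), (e)); then mutation at $k$ deletes the $i$--$j$ edge, $m_{ij}'=2$, and one must show that $t_i$ and $t_j$ commute. These subcases are missing from your proposal, and they are not routine: for the all-weight-one triangle $k\to i\to j\to k$ one needs $s_i$ to commute with $s_k s_j s_k^{-1}=s_j^{-1}s_ks_j$, which is precisely the cycle relation $t(i,j)=e$ from (T3) and does \emph{not} follow from the braid relations (T2) alone (with only the three pairwise braid relations this commutation fails, which is exactly why the cycle relation is imposed). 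The analogous statements hold for the two mixed-weight triangles arising in (d) and (e). Consequently your closing claim that (f) is the only case requiring (T3) is incorrect; the paper's proof invokes cycle relations in its subcases (b)(ii), (d)(ii) and (e)(ii) as well. (Omitting the right-hand subcases of (a) and (c) is harmless, since there $t_i=s_i$, $t_j=s_j$ commute by (T2).)

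Your treatment of case (f) is sound in outline and essentially parallels the paper's: the commutation $[s_k,\,s_j^{-1}s_is_j]=e$ you extract from (T3) is the same ingredient packaged in the paper as Lemma \ref{lem:extra_square_relation}, and your target identity $s_is_js_ks_is_ks_j=s_js_ks_is_js_ks_j$ does follow from the rearranged form $s_ks_is_j=s_is_js_i^{-1}s_ks_i$ together with $s_is_js_i=s_js_is_j$ (and the second orientation of (f) follows by symmetry). Likewise the left-hand subcases of (a)--(e) need only (T2), as in the paper's computations. To make the argument complete you must add the three oriented-triangle subcases described above and verify the commutation of $t_i$ with $t_j$ there using the appropriate (T3) relations (e.g.\ $t(i,j)=e$ for the tuple $(i,j,k)$), as the paper does.
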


\begin{proof}
The possibilities for the subdiagram induced by $i$, $j$, and $k$ are enumerated in Figure \ref{fig:local_three}.  We show that $t_i$ and $t_j$ satisfy the $(T2^\prime)$ relations by checking each case.  Within each case, subcase $(i)$ is when the subdiagram of $\Gamma$ is the diagram on the left in Figure \ref{fig:local_three}, and subcase $(ii)$ is when the subdiagram of $\Gamma$ is the diagram on the right in Figure \ref{fig:local_three}.

Throughout the proof we will make frequent use of the fact that if $m$ and $n$ are vertices of $\Gamma$, then

$$s_ms_ns_m = s_ns_ms_n \Leftrightarrow s_ms_ns_m^{-1} = s_n^{-1}s_ms_n \Leftrightarrow s_ms_n^{-1}s_m^{-1} = s_n^{-1}s_m^{-1}s_n \Leftrightarrow s_m^{-1}s_n^{-1}s_m^{-1} = s_n^{-1}s_m^{-1}s_n^{-1}$$

When helpful, we underline the sections of an expression that are about to be manipulated. We also frequently combine two applications of $A_{\Gamma}$ relations when one manipulation is simply commuting pairs of variables.

\begin{enumerate}[a)]
\item
\begin{enumerate}[i)]
\item We have $\langle t_it_j \rangle^{2} = s_ks_is_k^{-1}s_ks_js_k^{-1} = s_ks_is_js_k^{-1} = s_ks_js_is_k^{-1} = \langle t_jt_i \rangle^{2}$.
\item We have $\langle t_it_j \rangle^{2} = s_is_j = s_js_i = \langle t_jt_i \rangle^{2}$.
\end{enumerate}
\item
\begin{enumerate}[i)]
\item We have
\begin{align*}
\langle t_it_j \rangle^{3} &= s_ks_i\underline{s_k^{-1}s_js_k}s_is_k^{-1}\\
&= s_k\underline{s_is_j}s_k\underline{s_j^{-1}s_i}s_k^{-1}\\
&= s_ks_j\underline{s_is_ks_i}s_j^{-1}s_k^{-1}\\
&= \underline{s_ks_js_k}s_i\underline{s_ks_j^{-1}s_k^{-1}}\\
&= s_js_k\underline{s_js_is_j^{-1}}s_k^{-1}s_j\\
&= s_js_ks_is_k^{-1}s_j\\
&= \langle t_jt_i \rangle^{3}
\end{align*}

\item We have $\langle t_it_j \rangle^{2} = s_i\underline{s_ks_js_k^{-1}} = \underline{s_is_j^{-1}s_ks_j} = \underline{s_j^{-1}s_ks_j}s_i = s_ks_js_k^{-1}s_i = \langle t_jt_i \rangle^{2}$
\end{enumerate}
\item
\begin{enumerate}[i)]
\item We have $\langle t_it_j \rangle^{2} = s_ks_is_k^{-1}s_ks_js_k^{-1} = s_ks_is_js_k^{-1} = s_ks_js_is_k^{-1} = \langle t_jt_i \rangle^{2}$
\item We have $\langle t_it_j \rangle^{2} = s_is_j = s_js_i = \langle t_jt_i \rangle^{2}$
\end{enumerate}
\item
\begin{enumerate}[i)]
\item We have
\begin{align*}
\langle t_it_j \rangle^{4}\langle t_jt_i \rangle^{-4} &= t_it_jt_it_jt_i^{-1}t_j^{-1}t_i^{-1}t_j^{-1}\\
 &= s_ks_is_k^{-1}s_js_ks_i\underline{s_k^{-1}s_js_k}s_i^{-1}\underline{s_k^{-1}s_j^{-1}s_k}s_i^{-1}s_k^{-1}s_j^{-1}\\
&=s_ks_is_k^{-1}\underline{s_js_ks_is_j}s_k\underline{s_j^{-1}s_i^{-1}s_j}s_k^{-1}s_j^{-1}s_i^{-1}s_k^{-1}s_j^{-1}\\
&=s_k\underline{s_is_k^{-1}s_ks_j}s_ks_is_ks_i^{-1}s_k^{-1}\underline{s_j^{-1}s_i^{-1}s_k^{-1}s_j^{-1}}\\
&=s_ks_j\underline{s_is_ks_is_ks_i^{-1}s_k^{-1}s_i^{-1}s_k^{-1}}s_j^{-1}s_k^{-1}\\
&= s_ks_j\langle s_is_j \rangle^{4}\langle s_js_i \rangle^{-4}s_j^{-1}s_k^{-1} \\
&= e
\end{align*}

\item We have $\langle t_it_j \rangle^{2} = s_is_ks_js_k^{-1} = s_is_j^{-1}s_ks_j = s_j^{-1}s_ks_js_i = s_ks_js_k^{-1}s_i = \langle t_jt_i \rangle^{2}$
\end{enumerate}
\item
\begin{enumerate}[i)]
\item We have
\begin{align*}
\langle t_it_j \rangle^{4}\langle t_jt_i \rangle^{-4} &= t_it_jt_it_jt_i^{-1}t_j^{-1}t_i^{-1}t_j^{-1}\\
&= \underline{s_ks_is_k^{-1}}s_j\underline{s_ks_is_k^{-1}}s_j\underline{s_ks_i^{-1}s_k^{-1}}s_j^{-1}\underline{s_ks_i^{-1}s_k^{-1}}s_j^{-1}\\
&= s_i^{-1}s_k\underline{s_is_js_i^{-1}}s_k\underline{s_is_js_i^{-1}}s_k^{-1}\underline{s_is_j^{-1}s_i^{-1}}s_k^{-1}\underline{s_is_j^{-1}}\\
&= s_i^{-1}\underline{s_ks_js_ks_js_k^{-1}s_j^{-1}s_k^{-1}s_j^{-1}}s_i\\
&= s_i^{-1}\langle t_kt_j \rangle^{4}\langle t_jt_k \rangle^{-4}s_i\\
&= e
\end{align*}

\item Since $s_js_k^{-1}s_is_k = s_k^{-1}s_is_ks_j$, we have that $s_js_k^{-1}s_i^{-1}s_k = s_k^{-1}s_i^{-1}s_ks_j$, hence

\begin{align*}
\langle t_it_j \rangle^{2}\langle t_jt_i \rangle^{-2} &= t_it_jt_i^{-1}t_j^{-1}\\
&= s_is_k\underline{s_js_k^{-1}s_i^{-1}s_k}s_j^{-1}s_k^{-1}\\
&= s_is_ks_k^{-1}s_i^{-1}s_ks_js_j^{-1}s_k^{-1}\\
&= e\\
\end{align*}

\end{enumerate}
\item
\begin{enumerate}[i)]
\item We have that
\begin{align*}
s_k^{-1}\langle t_it_j \rangle^{3}\langle t_jt_i \rangle^{-3}s_k &= s_k^{-1}t_it_jt_it_j^{-1}t_i^{-1}t_j^{-1}s_k\\
&= s_is_k^{-1}s_js_ks_is_k^{-1}s_j^{-1}s_ks_i^{-1}s_k^{-1}s_j^{-1}s_k\\
&= e
\end{align*}

Where the second equality follows from Lemma \ref{lem:extra_square_relation}.  It follows that $\langle t_it_j \rangle^{4} = \langle t_jt_i \rangle^{4}$.

\item This follows from part (i) by symmetry.
\end{enumerate}
\end{enumerate}
\end{proof}

\begin{lem}\label{lem:r3_relations}
The elements $t_i$ satisfy the $(T3^\prime)$ relations in $\Gamma^\prime$.
\end{lem}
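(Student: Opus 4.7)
The plan is to verify the $(T3^\prime)$ relations by case analysis on the classification of chordless cycles in $\Gamma'$ provided by Lemma \ref{lem:chordless_cycles}, which enumerates twelve possibilities (a)--(l) for how a chordless cycle $C'$ in $\Gamma'$ arises from an induced subdiagram of $\Gamma$. For each case, I would write out the relation $t(i_0,i_1) = e$ in $A_{\Gamma'}$ for $C'$, apply $\varphi$ to each generator to obtain an expression in $A_\Gamma$, and show this expression equals the identity using the $(T2)$ relations together with the available $(T3)$ relations coming from chordless cycles in $\Gamma$. Before starting, I would invoke Lemma \ref{Sym-Lem}, Lemma \ref{2tri}, and Lemma \ref{2sqr} to reduce, in each case, to verifying the commutator $t(i_a, i_{a+1}) = e$ for a single conveniently chosen starting vertex $i_a$ of $C'$.

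Case (k) is immediate: when $C'$ does not meet $k$, we have $\varphi(r_i) = s_i$ on all vertices of $C'$, and the corresponding $(T3)$ relation for the identical cycle in $\Gamma$ applies directly. In the triangle cases (a)--(d), $C'$ is a $3$-cycle through $k$ created by mutation from two vertices $i, j$ adjacent to $k$ in $\Gamma$ (with $i,j$ not connected in $\Gamma$). Here I would choose the starting vertex so that the conjugation by $s_k$ in the definition of $\varphi$ cancels against itself as much as possible, reducing the verification to a direct computation using only $(T2)$ braid relations between $s_i, s_j, s_k$. The square cases (e)--(h) are handled similarly: $C'$ is a $4$-cycle in $\Gamma'$ whose underlying vertex set $\{i,j,k,\ell\}$ (with $k$ on $C'$) matches a $4$-cycle in $\Gamma$, and by Lemma \ref{2sqr} I can base the commutator at a vertex opposite $k$ so that $\varphi$ is trivial on the most generators in the relation.

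The long-cycle cases (i) and (j) are where the cycle $C' \subset \Gamma'$ has length $h+1$, coming from an $h$-cycle in $\Gamma$ disjoint from $k$ together with two edges to $k$. I would use Lemma \ref{Sym-Lem} to base the relation at a vertex of $C'$ not adjacent to $k$. After applying $\varphi$, the commutator becomes a word in the generators $s_i$ of the $h$-cycle (now with one generator conjugated by $s_k$), and expansion using $(T2)$ between $s_k$ and its two neighbors should reduce the claim to the $(T3)$ relation for the $h$-cycle in $\Gamma$ (which is chordless in $\Gamma$ by hypothesis). The main technical obstacle will be case (l), which involves a weight-$2$ triangle abutting the long cycle: here several generators are conjugated by $s_k$ and the commutator interacts with both the long cycle relation and the weight-$2$ triangle relation. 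I expect the argument will require a careful combination of Lemma \ref{lem:extra_square_relation}, Lemma \ref{2tri}, and the $(T3)$ relation for the weight-$1$ subcycle in $\Gamma$, with the book-keeping of the powers of $s_k$ introduced by $\varphi$ being the most delicate part of the entire proof.
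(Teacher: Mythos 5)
Your overall architecture is the paper's: run through the classification of Lemma~\ref{lem:chordless_cycles}, apply $\varphi$ to a single conveniently based relation $t(i_a,i_{a+1})=e$ for each new chordless cycle $C'$ (the symmetry results of Section~\ref{sec:one_relation} justifying that one relation per cycle suffices), and verify the resulting identity in $A_\Gamma$ using $(T2)$ together with the $(T3)$ relations of the chordless cycles of $\Gamma$. Case (k) is indeed immediate, and your plan for case (j) is essentially what the paper does.

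However, there is a genuine gap: the entire content of this lemma lives in the case-by-case computations, which you defer entirely, and the specific descriptions you do commit to show that several cases have not actually been examined, so the plan as written would be executed against the wrong targets. Concretely: (1) in case (d) of Figure~\ref{fig:first_cycle_types} the two neighbours of $k$ \emph{are} already joined in $\Gamma$ (the subdiagram is a weight-$2$ triangle), so the verification needs the $(T3)$ relation of that triangle and not ``only $(T2)$ braid relations''; (2) in cases (g) and (h) the new chordless cycle $C'$ is a \emph{triangle}, not a $4$-cycle, since mutation introduces a chord into the square; (3) you have case (i) backwards --- there $C'$ is the $h$-cycle avoiding $k$, produced from an $(h+1)$-cycle through $k$ in $\Gamma$, so the relation you must reduce to is the $(T3)$ relation of that $(h+1)$-cycle (the $h$-cycle you propose to invoke does not exist in $\Gamma$ in this case); and (4) case (l) involves no weight-$2$ triangle at all --- it is the near-trivial situation where $k$ is attached to a single vertex $h$ of a weight-$1$ cycle, so that only $t_h$ is a conjugate and $s_k$ commutes with every other generator of $C'$. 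The genuinely delicate cases are instead (d)--(h), where the weight-$2$ edges force longer manipulations; none of these are addressed beyond the (partly incorrect) set-up.
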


\begin{proof}
We know that every chordless cycle in $\Gamma'$ arises from a subdiagram of $\Gamma$ in the form of one of the cases of Lemma \ref{lem:chordless_cycles}, so we simply need to check that a cycle relation holds in each case.  We follow the labeling of the vertices used in Lemma \ref{lem:chordless_cycles}.  We denote by $t^\prime(m,n)$ the expression given by replacing the generators $r_i$ in the corresponding expression $t(m,n)$ in $A_{\Gamma^\prime}$ with $t_i$.  When helpful we note uses of $(T3)$ relations in $A_\Gamma$ by referencing the particular cycle relation used next to the manipulation.

\begin{enumerate}[a)]
\item We have $t_1t_2^{-1}t_3t_2 = s_1s_2^{-1}s_2s_3s_2^{-1}s_2 = s_1s_3 = s_3s_1 = t_2^{-1}t_3t_2t_1$, hence $t^\prime(1,2) = e$.

\item We have $ t_1t_2^{-1}t_3t_2 = s_1s_2^{-1}s_2s_3s_2^{-1}s_2 = s_1s_3 = s_3s_1 = t_2^{-1}t_3t_2t_1$, hence $t^\prime(1,2) = e$.

\item We have $t_1t_2^{-1}t_3t_2 = s_1s_2^{-1}s_2s_3s_2^{-1}s_2 = s_3s_1 = t_2^{-1}t_3t_2t_1$, hence $t^\prime(1,2) = e$.

\item We have
\begin{align*}
s_2^{-1}t^\prime(3,1)s_2 &= s_2^{-1}t_3t_1^{-1}t_2t_1t_3^{-1}t_1^{-1}s_2^{-1}t_1s_2\\
&= s_3\underline{s_2^{-1}s_1^{-1}s_2s_1s_2}s_3^{-1}\underline{s_2^{-1}s_1^{-1}s_2^{-1}s_1s_2}\\
&= s_3s_1\underline{s_2s_1^{-1}s_3^{-1}s_1}s_2^{-1}s_1^{-1}\\
&= s_3s_1s_1^{-1}s_3^{-1}s_1s_2s_2^{-1}s_1^{-1} &\text{by } t(2,1)\\
&= e
\end{align*}

\noindent Hence $t^\prime(3,1) = e$.

\item We have
\begin{align*}
t_1t_2^{-1}t_3^{-1}t_4t_3t_2 &= (s_1s_2^{-1}s_1^{-1}s_1s_2s_1^{-1})s_1s_1s_2^{-1}s_1^{-1}s_3^{-1}s_4s_3s_1s_2s_1^{-1}\\
&= s_1s_2^{-1}s_1^{-1}\underline{s_1s_2s_1s_2^{-1}s_1^{-1}}s_3^{-1}s_4s_3s_1s_2s_1^{-1}\\
&= s_1s_2^{-1}s_1^{-1}\underline{s_2s_3^{-1}s_4s_3}s_1s_2s_1^{-1}\\
&= s_1s_2^{-1}s_1^{-1}s_3^{-1}s_4s_3\underline{s_2s_1s_2s_1^{-1}} &\text{by } t(2,3)\\
&= s_1s_2^{-1}s_1^{-1}s_3^{-1}s_4s_3s_1s_2s_1^{-1}s_1\\
&= t_2^{-1}t_3^{-1}t_4t_3t_2t_1\\
\end{align*}

\noindent Hence $t^\prime(1,2) = e$.

\item We have
\begin{align*}
t_1t_2^{-1}t_3^{-1}t_4t_3t_2 &= s_1\underline{s_2^{-1}s_2}s_3^{-1}\underline{s_2^{-1}s_4s_2}s_3 \underline{s_2^{-1}s_2}\\
&= s_1s_3^{-1}s_4s_3\\
&= s_3^{-1}s_4s_3s_1 &\text{by } t(1,3)\\
&= (s_2^{-1}s_2)s_3^{-1}(s_2^{-1}\underline{s_2)s_4(s_2^{-1}}s_2)s_3(s_2^{-1}s_2s_1\\
&= s_2^{-1}s_2s_3^{-1}s_2^{-1}s_4s_2s_3s_2^{-1}s_2s_1\\
&= t_2^{-1}t_3^{-1}t_4t_3t_2t_1\\
\end{align*}

\noindent Hence $t^\prime(1,2) = e$.

\item We have
\begin{align*}
t^\prime(3,4) &= t_3t_4^{-1}t_2t_4t_3^{-1}t_4^{-1}t_2^{-1}t_4\\
&= \underline{s_3s_1}s_4^{-1}s_1^{-1}s_2s_1s_4\underline{s_1^{-1}s_3^{-1}s_1}s_4^{-1}s_1^{-1}s_2^{-1}s_1s_4s_1^{-1}\\
&= s_1\underline{s_3s_4^{-1}s_1^{-1}s_2s_1s_4s_3^{-1}s_4^{-1}s_1^{-1}s_2^{-1}s_1s_4}s_1^{-1}\\
&= s_1t(3,4)s_1^{-1}\\
&= s_1s_1^{-1}\\
&= e\\
\end{align*}

\item We have
\begin{align*}
t_3t_4^{-1}t_1t_4 &= s_3s_4^{-1}\underline{s_2s_1s_2^{-1}}s_4\\
&= s_3s_4^{-1}s_1^{-1}s_2s_1s_4\\
&= s_4^{-1}\underline{s_1^{-1}s_2s_1}s_4s_3 &\text{by} t(3,4)\\
&= s_4^{-1}s_2s_1s_2^{-1}s_4s_3\\
&= t_4^{-1}t_1t_4t_3\\
\end{align*}

\noindent Hence $t^\prime(3,4) = e$.

\item We have
\begin{align*}
&t^\prime(1,2)\\
&= t_1t_2^{-1}t_3^{-1}\cdots t_{h-1}^{-1}t_ht_{h-1}\cdots t_2t_1^{-1}t_2^{-1}\cdots t_{h-1}^{-1}t_h^{-1}t_{h-1}\cdots t_2\\
&= s_1s_2^{-1}s_3^{-1}\cdots s_{h-1}^{-1}\underline{s_ks_hs_k^{-1}}s_{h-1}\cdots s_2s_1^{-1}s_2^{-1}\cdots s_{h-1}^{-1}\underline{s_ks_h^{-1}s_k^{-1}}s_{h-1}\cdots s_2\\
&= s_1s_2^{-1}s_3^{-1}\cdots s_{h-1}^{-1}s_h^{-1}s_ks_hs_{h-1}\cdots s_2s_1^{-1}s_2^{-1}\cdots s_{h-1}^{-1}s_h^{-1}s_k^{-1}s_hs_{h-1}\cdots s_2\\
&= t(1,2)\\
&= e\\
\end{align*}

\item Here $s_k$ commutes with $s_i$ for all $i\ne 1,h$, so we have
\begin{align*}
& t^\prime(h,1)\\
&= t_ht_k^{-1}t_1^{-1}t_2^{-1}\cdots t_{h-2}^{-1}t_{h-1}t_{h-2}\cdots t_1t_kt_h^{-1}t_k^{-1}t_1^{-1}t_2^{-1}\cdots t_{h-2}^{-1}t_{h-1}^{-1}t_{h-2}\cdots t_2t_1t_k\\
&= s_h\underline{s_k^{-1}s_k}s_1^{-1}\underline{s_k^{-1}}s_2^{-1}\cdots s_{h-2}^{-1}s_{h-1}s_{h-2}\cdots \underline{s_k}s_1\underline{s_k^{-1}s_k}s_h^{-1}\underline{s_k^{-1}s_k}s_1^{-1} \\
&\hspace{2cm}\underline{s_k^{-1}}s_2^{-1}\cdots s_{h-2}^{-1}s_{h-1}^{-1}s_{h-2}\cdots s_2 \underline{s_k}s_1\underline{s_k^{-1}s_k}\\
&= s_hs_1^{-1}s_2^{-1}\cdots s_{h-2}^{-1}s_{h-1}s_{h-2}\cdots s_2s_1s_h^{-1}s_1^{-1}s_2^{-1}\cdots s_{h-2}^{-1}s_{h-1}^{-1}s_{h-2}\cdots s_2s_1\\
&= t(h,1)\\
&= e\\
\end{align*}

\item Here $t_i = s_i$ for all vertices $i$ in $C^\prime$, so the case is trivial.

\item  If the edge between $k$ and $h$ points towards $h$, then $t_i = s_i$ for all $i\in C^\prime$ and the case is trivial.  If the edge points towards $k$ then $t_h = s_ks_hs_k^{-1}$ and $s_k$ commutes with $s_i$ for all vertices $i\ne h$ in $C^\prime$, so we have

\begin{align*}
t_1t_2^{-1}\cdots t_{h-1}^{-1}t_ht_{h-1}\cdots t_2 &= s_1s_2^{-1}\cdots s_{h-1}^{-1}s_ks_hs_k^{-1}s_{h-1}\cdots s_2\\
&= s_k\underline{s_1s_2^{-1}\cdots s_{h-1}^{-1}s_hs_{h-1}\cdots s_2}s_k^{-1}\\
&= s_ks_2^{-1}\cdots s_{h-1}^{-1}s_hs_{h-1}\cdots s_2s_1s_k^{-1} &\text{by } t(1,2)\\
&= s_2^{-1}\cdots s_{h-1}^{-1}s_ks_hs_k^{-1}s_{h-1}\cdots s_2s_1\\
&= t_2^{-1}\cdots t_{h-1}^{-1}t_ht_{h-1}\cdots t_2t_1\\
\end{align*}

\noindent Hence $t^\prime(1,2) = e$.
\end{enumerate}
\end{proof}

\begin{proof}[Proof of Proposition \ref{prop:ti_relations}]
Lemma \ref{lem:r2a_relations} and Lemma \ref{lem:r2b_relations} show that the elements $t_i$ satisfy the $(T2^\prime)$ relations for $A_{\Gamma^\prime}$. Lemma \ref{lem:r3_relations} shows that they satisfy the $(T3^\prime)$ relations. Since these are all of the relations defining $A_{\Gamma^\prime}$, it follows that $\varphi$ defines a group homomorphism $A_{\Gamma^\prime}\rightarrow A_\Gamma$.
\end{proof}

\section{Extension to Affine Type Diagrams} \label{sec:affine}
It is natural to ask whether one can obtain similar results for diagrams that are not of finite type, particularly those of affine type (i.e. mutation equivalent to an affine Dynkin diagram). In \cite{FT13}, the authors associated the following group to a diagram of affine type.

\begin{def}\cite[Definition 4.1]{FT13}
Let $\Gamma$ be a diagram of affine type with n + 1 vertices. Then we define $W_{\Gamma}$ to be the diagram with generators $s_{1}, \dots, s_{n}$ and satisfying the following relations:
\begin{enumerate}
\item[(R1)] $s_{i}^{2} = e$ for all i = 1, \dots, n
\item[(R2)] $(s_{i}s_{j})^{m_{ij}} = e$ where 
$$m_{ij} = 
\begin{cases}
2 &\text{if there is no arrow between i and j in $\Gamma$} \\
3 &\text{if there is an arrow of weight 1 between i and j in $\Gamma$} \\
4 &\text{if there is an arrow of weight 2 between i and j in $\Gamma$} \\
6 &\text{if there is an arrow of weight 3 between i and j in $\Gamma$} \\
\infty &\text{otherwise}
\end{cases}$$
\item[(R3)] For every chordless oriented cycle:
$$i_{0} \stackrel{w_{i_{0}}}{\longrightarrow} i_{1} \stackrel{w_{i_{1}}}{\longrightarrow} \dots \stackrel{w_{i_{d-2}}}{\longrightarrow} i_{d-1} \stackrel{w_{i_{d-1}}}{\longrightarrow} i_{0},$$
define for l $\in \{0, \dots, d-1\}$, 
$$t(l) = (\prod_{j=l}^{l+d-2}{\sqrt{w_{i_{j}}}} - \sqrt{w_{i_{l+d-1}}})^{2}.$$
Then take the relation $(s_{i_{l}}p(i_{l}, i_{l+1}))^{m(l)} = e$ where
$$m(l) =
\begin{cases}
2 &\text{if $t(l)=0$} \\
3 &\text{if $t(l)=1$} \\
4 &\text{if $t(l)=2$} \\
6 &\text{if $t(l)=3$}
\end{cases}$$
\item[(R4)] For each subdiagram of $\Gamma$ of the form shown in the first column of Table \ref{table1}, we add the relation(s) listed in the second column.
\end{enumerate}
\end{def}

Given this definition of $W_{\Gamma}$, which generalizes the definition of $W_{\Gamma}$ found in \cite{BM13}, one obtains the following result.

\begin{thm} \cite[Theorem 4.6]{FT13}
Let W be an affine Weyl group and let $\Gamma$ be a diagram mutation equivalent to an orientation of a Dynkin diagram of the same type as W. Then W is isomorphic to $W_{\Gamma}$.
\end{thm}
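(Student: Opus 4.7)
The plan is to follow the two-step strategy already used for finite type in \cite{BM13} and in Section \ref{sec:main_result} of this paper: first establish mutation invariance of $W_\Gamma$, and then verify the theorem on a single distinguished representative of each mutation class. Once both are in hand, an induction on the length of a mutation sequence connecting $\Gamma$ to an orientation of the corresponding affine Dynkin diagram gives the isomorphism $W \cong W_\Gamma$.

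First, I would fix an orientation $\Gamma_0$ of the affine Dynkin diagram of the same type as $W$ and verify directly that $W_{\Gamma_0} \cong W$. Since affine Dynkin diagrams are acyclic except for type $\widetilde{A}_n$, in most cases the relations (R3) and (R4) are either vacuous or reduce to the standard braid relations already captured by (R1) and (R2), so $W_{\Gamma_0}$ visibly recovers the standard Coxeter presentation of $W$. The $\widetilde{A}_n$ case must be handled separately: there is one chordless cycle, so (R3) contributes a single cycle relation $(s_{i_0}p(i_0,i_1))^{m(0)}$, and one checks by direct computation (using the formula for $t(l)$) that this relation already holds in the affine Weyl group of type $\widetilde{A}_n$ and does not collapse it.

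For the mutation invariance step, the plan is to mirror Proposition \ref{prop:ti_relations} and Theorem \ref{thm:main}, defining a candidate map $\varphi\colon W_{\Gamma'}\to W_\Gamma$ by $\varphi(r_i) = s_ks_is_k$ when $i\to k$ in $\Gamma$ and $\varphi(r_i) = s_i$ otherwise (using $s_k^2=e$ so there is no need to invert). One then verifies case by case that $\varphi$ respects (R1), (R2), (R3), and (R4). The (R1) and (R2) checks go through exactly as in \cite{BM13} once the local pictures around $k$ are enumerated; the (R3) check requires an affine analog of Lemma \ref{lem:chordless_cycles}, classifying which subdiagrams of $\Gamma$ produce a chordless cycle in $\Gamma' = \mu_k(\Gamma)$ together with the correct $m(l)$-value after mutation. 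Constructing an inverse $\psi$ in the same fashion (and using involutivity $\mu_k\circ\mu_k = \mathrm{id}$) then yields $W_\Gamma \cong W_{\Gamma'}$.

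The main obstacle will be the (R4) relations from Table \ref{table1}: unlike the finite-type cycle relations, these attach to very specific local subdiagrams, so one must work out exactly how each (R4)-inducing configuration transforms under mutation at each of its vertices and at each of the adjacent vertices. In particular, one needs a finite list of local-picture transitions analogous to Corollary \ref{local_three_picture} but tailored to the subdiagrams appearing in Table \ref{table1}, and then a line-by-line verification that $\varphi$ sends the (R4) relators of $\Gamma'$ to elements that are trivial in $W_\Gamma$. This bookkeeping, rather than any single deep computation, is where almost all the work lies; once it is complete, the abstract argument identifying $\psi\circ\varphi$ and $\varphi\circ\psi$ with the identity proceeds exactly as in the proof of Theorem \ref{thm:main}, using an affine analog of Lemma \ref{lem:op_homomorphism} (whose proof in affine type should be essentially unchanged, since reversing arrows does not interact with the $(R4)$ subdiagrams in any nontrivial way beyond reversing their edges).
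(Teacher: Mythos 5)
This statement is quoted from \cite{FT13} as background for Section \ref{sec:affine}; the paper you are working from gives no proof of it, so there is nothing internal to compare against. Your outline does, however, match the architecture that \cite{FT13} actually uses (and that \cite{BM13} and Section \ref{sec:main_result} use in finite type): verify the presentation on one representative of the mutation class, prove mutation invariance of $W_\Gamma$, and induct along a mutation sequence. At that level the proposal is sound.

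Two caveats are worth flagging. First, the base case is less routine than ``visibly recovers the standard Coxeter presentation.'' In type $\widetilde{A}_n$ the underlying affine Dynkin diagram is a cycle, and its orientations are \emph{not} all mutation equivalent: the cyclically oriented $n$-cycle is of finite type $D_n$, and the acyclic orientations split into classes indexed by the number of arrows in each direction. So one must choose the orientation carefully, check which of (R3)/(R4) actually fire on it, and confirm the resulting presentation is the affine Weyl group rather than a proper quotient; this is a genuine verification, not an observation. Second, essentially all of the content of the mutation-invariance step lives in the parts you defer: the affine analogue of Lemma \ref{lem:chordless_cycles} (which subdiagrams of $\Gamma$ produce chordless cycles of $\Gamma'$ and with what $m(l)$), and the case-by-case check that $\varphi$ kills the (R4) relators of $\Gamma'$. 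Labeling that ``bookkeeping'' understates it --- it is the bulk of the proof in \cite{FT13}. Finally, the $\Gamma^{op}$ detour you import from the Artin-group argument is unnecessary here: since the $s_i$ are involutions, $s_ks_is_k = s_k^{-1}s_is_k$, the map $\varphi$ is defined symmetrically in $\Gamma$ and $\Gamma' = \mu_k(\Gamma)$, and the inverse is obtained directly from $\mu_k\circ\mu_k = \mathrm{id}$ without passing through opposite diagrams.
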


\begin{table}[h] 
\begin{tabular}{| p{3.5cm} | p{7cm} |}
\hline
Subdiagram & (R4) Relation \\ \hline
\begin{center}\includegraphics[scale = .30]{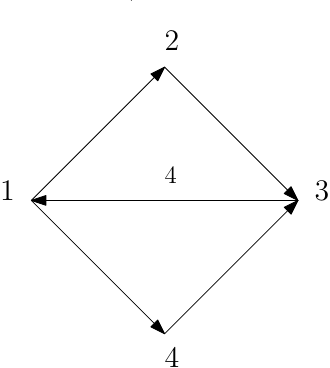}\end{center} & $(s_{1}s_{2}s_{3}s_{4}s_{3}s_{2})^{2} = e$ \\ \hline

\begin{center}\includegraphics[scale = .30]{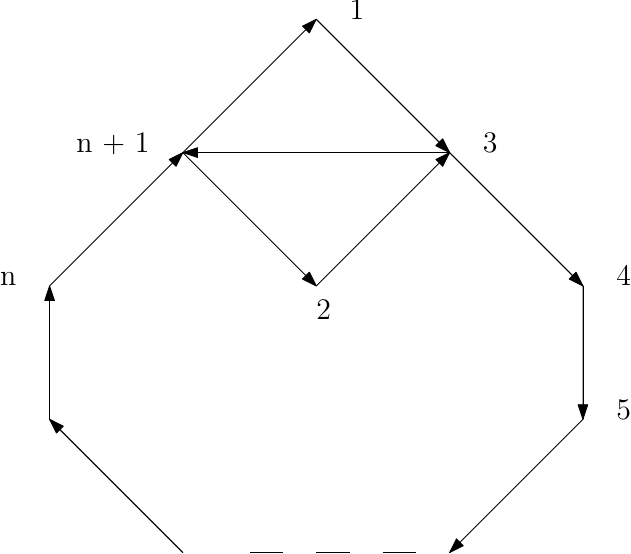}\end{center} 
& $(s_{1}s_{2}s_{3}s_{2}s_{1}s_{4}s_{5} \dots s_{n}s_{n+1}s_{n} \dots s_{5}s_{4})^{2} = e$ \\ \hline

\begin{center}\includegraphics[scale = .30]{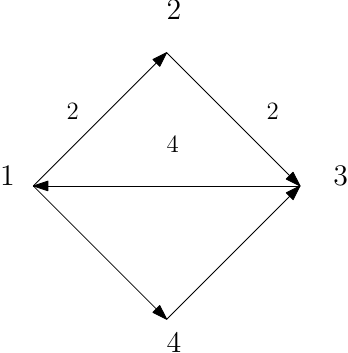}\end{center} & $(s_{2}s_{3}s_{4}s_{1}s_{4}s_{3})^{2} = e$ and $(s_{2}s_{1}s_{4}s_{3}s_{4}s_{1})^{2} = e$\\ \hline

\begin{center}\includegraphics[scale = .30]{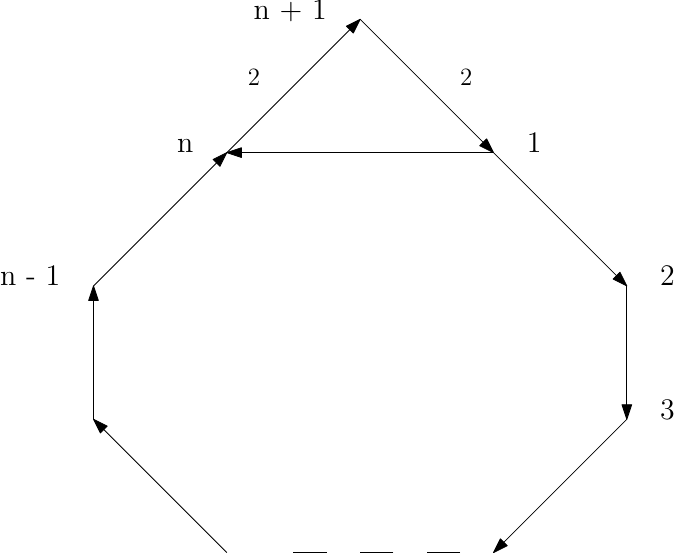}\end{center} & $(s_{n+1}s_{1}s_{n+1}s_{2}s_{3} \dots s_{n-1}s_{n}s_{n-1} \dots s_{3}s_{2})^{2} = e$ \\ \hline

\begin{center}\includegraphics[scale = .30]{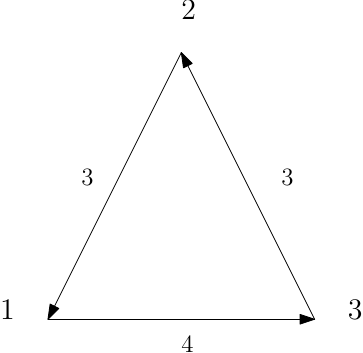}\end{center} & $(s_{2}s_{1}s_{2}s_{1}s_{2}s_{3})^{2} = e$ and $(s_{2}s_{3}s_{2}s_{3}s_{2}s_{1})^{2} = e$ \\ \hline
\end{tabular}
\caption{(R4) Relations} \label{table1}
\end{table}

In seeking an analogous result for Artin groups, we generalize A$_{\Gamma}$ as follows:

\begin{defn}
Let $\Gamma$ be a diagram of affine type with n + 1 vertices. Then we define $A_{\Gamma}$ to be the diagram with generators $s_{1}, \dots, s_{n}$ and satisfying the following relations:
\begin{enumerate}
\item[(T2)] $\langle s_{i}, s_{j} \rangle^{m_{ij}} = \langle s_{j}, s_{i} \rangle^{m_{ij}}$ where
$$m_{ij} = 
\begin{cases}
2 &\text{if there is no arrow between i and j in $\Gamma$} \\
3 &\text{if there is an arrow of weight 1 between i and j in $\Gamma$} \\
4 &\text{if there is an arrow of weight 2 between i and j in $\Gamma$} \\
6 &\text{if there is an arrow of weight 3 between i and j in $\Gamma$} \\
\infty &\text{otherwise}
\end{cases}$$
\item[(T3)] For every chordless oriented cycle:
$$i_{0} \stackrel{w_{i_{0}}}{\longrightarrow} i_{1} \stackrel{w_{i_{1}}}{\longrightarrow} \dots \stackrel{w_{i_{d-2}}}{\longrightarrow} i_{d-1} \stackrel{w_{i_{d-1}}}{\longrightarrow} i_{0},$$
define for l $\in \{0, \dots, d-1\}$, 
$$t(l) = (\prod_{j=l}^{l+d-2}{\sqrt{w_{i_{j}}}} - \sqrt{w_{i_{l+d-1}}})^{2}.$$
Then take the relation $\langle s_{i_{l}}p(i_{l}, i_{l+1}) \rangle^{m(l)} = \langle p(i_{l}, i_{l+1})s_{i_{l}} \rangle^{m(l)}$ where
$$m(l) =
\begin{cases}
2 &\text{if $t(l)=0$} \\
3 &\text{if $t(l)=1$} \\
4 &\text{if $t(l)=2$} \\
6 &\text{if $t(l)=3$}
\end{cases}$$
\item[(T4)] For each subdiagram of $\Gamma$ of the form shown in the first column of Table \ref{table2}, we add the relation(s) listed in the second column.
\end{enumerate}
\end{defn}

\begin{conj}
Let $\Gamma$ be a diagram of affine type, and let $\Gamma^{\prime}$ be the mutation of $\Gamma$ at a node k. Then $A_{\Gamma}$ $\cong$ $A_{\Gamma^{\prime}}$.
\end{conj}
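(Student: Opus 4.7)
The plan is to mirror the strategy used in Section~\ref{sec:proof_of_main_result} for the finite type case: first define a candidate homomorphism $\varphi\colon A_{\Gamma^{\prime}}\to A_{\Gamma}$ via the same conjugation formula
\[
\varphi(r_i)=\begin{cases} s_ks_is_k^{-1} & \text{if there is an arrow } i\to k \text{ in } \Gamma,\\ s_i & \text{otherwise,}\end{cases}
\]
then verify that the images $t_i=\varphi(r_i)$ satisfy every defining relation of $A_{\Gamma^{\prime}}$, and finally invert $\varphi$ by composing with the analogue of the opposite-diagram isomorphism $\Delta$ from Lemma~\ref{lem:op_homomorphism}. The last step is essentially formal once $\varphi$ is known to be a homomorphism, provided Lemma~\ref{lem:op_homomorphism} extends to the affine setting; this extension should be straightforward because the argument there only inverts relations and commutes generators, and the new (T4) relations in Table~\ref{table2} are either commutator-type or palindromic, so inverting them yields relations of the same form on $\Gamma^{op}$.

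For the (T2$^\prime$) relations, the plan is to reuse the case analysis from Lemmas~\ref{lem:r2a_relations} and \ref{lem:r2b_relations} essentially verbatim, since those arguments depend only on the local picture of three vertices $i,j,k$ with $i,j$ connected to $k$ and make no global use of finite type beyond 2-finiteness. One first needs to check that the affine-type classification of such local triples matches Figure~\ref{fig:local_three} (possibly with additional weight-3 configurations, which can be handled by the same algebraic identities using $m_{ij}=6$). For the (T3$^\prime$) relations the plan is to extend Lemma~\ref{lem:chordless_cycles} to classify how chordless cycles in $\Gamma^{\prime}$ arise from subdiagrams of $\Gamma$ in affine type, and then check case by case that $t^{\prime}(l)=e$ (or its higher-power analogue $\langle\cdot\rangle^{m(l)}=\langle\cdot\rangle^{m(l)}$) holds. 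Because the affine exponent $m(l)\in\{2,3,4,6\}$ is determined by the weight data $t(l)$, the same underlying identities used in Lemma~\ref{lem:r3_relations} should carry over, with the higher cases reducing to iterated applications of the $m(l)=2$ commutator identity together with braid relations.

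The truly new work is checking that the images $t_i$ satisfy the (T4$^\prime$) relations in $A_{\Gamma^{\prime}}$. The plan is to enumerate, for each subdiagram of $\Gamma^{\prime}$ listed in Table~\ref{table2}, all possible preimage subdiagrams in $\Gamma$ together with the mutated vertex $k$ (either inside the subdiagram, on its boundary, or disjoint from it), and to verify the corresponding relation among the $t_i$. When $k$ is disjoint from the subdiagram, $t_i=s_i$ on the relevant generators and the relation is inherited from (T4) in $A_{\Gamma}$; when $k$ lies inside the subdiagram, one must show that the mutated configuration is again one of the five types in Table~\ref{table2} (so $\Gamma$ carried its own (T4) relation), and translate that relation through the conjugation $s_i\mapsto s_ks_is_k^{-1}$. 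This translation step is where the analogy with Lemma~\ref{lem:r3_relations} case (e)/(g) should help: the conjugation by $s_k$ factors out of the long cycle word provided $s_k$ commutes with all generators it meets, which is exactly the chordless-cycle hypothesis.

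The main obstacle is the (T4) step. Unlike (T2) and (T3), the (T4) relations are global, can be very long (e.g.\ the $\tilde{D}_n$ relation has length $\sim 2n$), and Table~\ref{table2} does not come with a mutation-invariance classification analogous to Lemma~\ref{lem:chordless_cycles}; one must first prove the affine analogue of that classification lemma, showing that every Table~\ref{table2} subdiagram of $\Gamma^{\prime}$ comes from a Table~\ref{table2} subdiagram (or a known short list of variants) of $\Gamma$ under the mutation. Establishing this combinatorial dictionary, and then executing the conjugation argument for each of the five families while keeping track of where $k$ can sit relative to the subdiagram, will be the bulk of the work. Once that is done, the remaining pieces---(T2$^\prime$), (T3$^\prime$), and the opposite-diagram inversion---should go through along the finite type blueprint with only notational changes.
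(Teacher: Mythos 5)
This statement is stated in the paper only as a conjecture; the authors give no proof, so there is nothing to compare your argument against, and what you have written is a strategy outline rather than a proof. You correctly identify the blueprint (the homomorphism $\varphi$, the opposite-diagram trick of Lemma~\ref{lem:op_homomorphism}, and the case analyses of Lemmas~\ref{lem:r2a_relations}, \ref{lem:r2b_relations}, \ref{lem:r3_relations}), and you correctly locate the hardest missing piece in the (T4) relations. But as it stands the proposal leaves every genuinely new step unexecuted, and several of them are not routine. First, the affine (T3) relations are qualitatively harder than the finite-type ones: in finite type every cycle relation is a commutator ($t(i_a,i_{a+1})=e$), whereas in affine type one must verify braid-type relations $\langle s_{i_l}p(i_l,i_{l+1})\rangle^{m(l)}=\langle p(i_l,i_{l+1})s_{i_l}\rangle^{m(l)}$ with $m(l)$ as large as $6$; your claim that these "reduce to iterated applications of the $m(l)=2$ commutator identity together with braid relations" is an assertion, not an argument, and none of the computations in Section~\ref{sec:one_relation} or Lemma~\ref{lem:r3_relations} address a length-$4$ or length-$6$ relation between a generator and a long conjugated word. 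Even the symmetry results of Section~\ref{sec:one_relation} (which the finite-type proof needs to reduce the number of cycle relations to check) would have to be re-proved for these higher-order relations.

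Second, the combinatorial infrastructure you propose to "extend" does not currently exist and is substantially larger in affine type: Proposition~\ref{cycle_types}, Corollary~\ref{local_three_picture}, and Lemma~\ref{lem:chordless_cycles} all rest on the finite-type classification, and affine diagrams admit chordless cycles with other weight patterns (including weight-$3$ edges and, in rank-$2$ subdiagrams, weights $\ge 4$ giving $m_{ij}=\infty$), so both the local three-vertex analysis and the cycle-tracking lemma need to be rebuilt from scratch, presumably following Felikson--Tumarkin. Third, for (T4) you would need a mutation dictionary showing how each subdiagram in Table~\ref{table2} of $\Gamma'$ arises from $\Gamma$, and then long conjugation computations for each of the five families and each position of $k$; you acknowledge this but do none of it. In short, the proposal is a reasonable research plan whose feasibility is exactly what the conjecture's open status calls into question; it cannot be accepted as a proof, and the specific claims that the finite-type lemmas "carry over with only notational changes" are unsubstantiated at the points where the affine setting genuinely differs.
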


\begin{table}[h] 
\begin{tabular}{| p{3.5cm} | p{7cm} |}
\hline
Subdiagram & (T4) Relation \\ \hline
\begin{center}\includegraphics[scale = .30]{Diagram1.pdf}\end{center} & $s_{2}s_{1}s_{2}^{-1}s_{4}^{-1}s_{3}s_{4} = s_{4}^{-1}s_{3}s_{4}s_{2}s_{1}s_{2}^{-1}$ \\ \hline

\begin{center}\includegraphics[scale = .30]{Diagram2.pdf}\end{center} 
& $s_{2}s_{3}^{-1}s_{4}^{-1}\dots s_{n}^{-1}s_{1}s_{n+1}s_{1}^{-1}s_{n} \dots s_{4}s_{3}$
\newline $= s_{3}^{-1}s_{4}^{-1}\dots s_{n}^{-1}s_{1}s_{n+1}s_{1}^{-1}s_{n} \dots s_{4}s_{3}s_{2}$\\ \hline

\begin{center}\includegraphics[scale = .30]{Diagram3.pdf}\end{center} & $s_{2}s_{3}^{-1}s_{4}s_{1}s_{4}^{-1}s_{3} = s_{3}^{-1}s_{4}s_{1}s_{4}^{-1}s_{3}s_{2}$ and
$s_{2}s_{1}s_{4}^{-1}s_{3}s_{4}s_{1}^{-1} = s_{1}s_{4}^{-1}s_{3}s_{4}s_{1}^{-1}s_{2}$\\ \hline

\begin{center}\includegraphics[scale = .30]{Diagram4.pdf}\end{center} & $s_{1}s_{2}^{-1} \dots s_{n-1}^{-1}s_{n+1}s_{n}s_{n+1}^{-1}s_{n-1} \dots s_{2} = s_{2}^{-1} \dots s_{n-1}^{-1}s_{n+1}s_{n}s_{n+1}^{-1}s_{n-1} \dots s_{2}s_{1}$ \\ \hline

\begin{center}\includegraphics[scale = .30]{Diagram5.pdf}\end{center} & $s_{2}s_{1}^{-1}s_{2}s_{3}s_{2}^{-1}s_{1} = s_{1}^{-1}s_{2}s_{3}s_{2}^{-1}s_{1}s_{2}$ and $s_{1}s_{2}s_{3}s_{2}s_{3}^{-1}s_{2}^{-1} = s_{2}s_{3}s_{2}s_{3}^{-1}s_{2}^{-1}s_{1}$ \\ \hline
\end{tabular}
\caption{(T4) Relations} \label{table2}
\end{table}

\section{Acknowledgements}
This work was supported by the RTG grant NSF/DMS-1148634. We would also like to thank the University of Minnesota School of Mathematics for hosting us. We are greatly appreciative of Gregg Musiker's mentoring, support, and guidance on this problem. We thank Emily Gunawan for her useful comments on our work. We also thank Vic Reiner for motivating this problem.

\vspace{0.5cm}

\end{document}